\documentclass[a4paper,11pt,reqno]{amsart}
\usepackage{amsrefs}
\usepackage{mathrsfs}
\usepackage{amsfonts}
\usepackage[centertags]{amsmath}
\usepackage{amssymb}
\usepackage{amsthm}
\usepackage{graphicx}
\usepackage{caption}
\usepackage{hyperref}
\usepackage{enumerate}
\usepackage{geometry}
\usepackage{appendix}
\usepackage{color}
\usepackage[all]{xy}
\usepackage{float}
\usepackage{amsmath}
\usepackage{longtable}
\usepackage{resizegather}
\usepackage{framed}
\usepackage{mathdots}
\usepackage{multirow}
\usepackage{array}
\usepackage{mathtools}
\usepackage{tikz}
\usepackage{dashrule}
\usepackage{young}
\usetikzlibrary{chains}

\newtheorem{theorem}{Theorem}[section]
\newtheorem{corollary}[theorem]{Corollary}
\newtheorem{lemma}[theorem]{Lemma}
\newtheorem{proposition}[theorem]{Proposition}
\newtheorem{definition}[theorem]{Definition}
\newtheorem{remark}[theorem]{Remark}
\newtheorem{example}[theorem]{Example}

\numberwithin{equation}{section}

\newcommand\scalemath[2]{\scalebox{#1}{\mbox{\ensuremath{\displaystyle #2}}}}
\newcommand{\udots}{\mathinner{\mskip1mu\raise1pt\vbox{\kern7pt\hbox{.}}
\mskip2mu\raise4pt\hbox{.}\mskip2mu\raise7pt\hbox{.}\mskip1mu}}

\title{Quiver mutations and Boolean reflection monoids}
\author{Bing Duan, Jian-Rong Li, and Yan-Feng Luo}
\address{Bing Duan: School of Mathematics and Statistics, Lanzhou University, Lanzhou 730000, P. R. China.}
\email{duan890818@163.com}
\address{Jian-Rong Li: Dept. of Mathematics, The Weizmann Institute of Science, Rehovot 7610001, Israel; school of Mathematics and Statistics, Lanzhou University, Lanzhou 730000, P. R. China.}
\email{lijr07@gmail.com}
\address{Yan-Feng Luo: School of Mathematics and Statistics, Lanzhou University, Lanzhou 730000, P. R. China.}
\email{luoyf@lzu.edu.cn}
\date{}

\begin{document}

\maketitle

\begin{abstract}
In 2010, Everitt and Fountain introduced the concept of reflection monoids. The Boolean reflection monoids form a family of reflection monoids (symmetric inverse semigroups are Boolean reflection monoids of type $A$). In this paper, we give a family of presentations of Boolean reflection monoids and show how these presentations are compatible with quiver mutations of orientations of Dynkin diagrams with frozen vertices. Our results recover the presentations of Boolean reflection monoids given by Everitt and Fountain and the presentations of symmetric inverse semigroups given by Popova respectively. Surprisingly, inner by diagram automorphisms of irreducible Weyl groups and Boolean reflection monoids can be constructed by sequences of mutations preserving the same underlying diagrams. Besides, we show that semigroup algebras of Boolean reflection monoids are cellular algebras.

\hspace{0.15cm}

\noindent
{\bf Key words}: Boolean reflection monoids; presentations; mutations of quivers; inner by diagram automorphisms; cellular semigroups 

\hspace{0.15cm}

\noindent
{\bf 2010 Mathematics Subject Classification}: 13F60; 20M18
\end{abstract}

\section{Introduction}
Around 2000, Fomin and Zelevinsky introduced a new family of commutative algebras called cluster algebras, \cite{FZ02}.  The theory of cluster algebras has connections and applications to diverse areas of mathematics and physics, including quiver representations, Teichm\"{u}ller theory, tropical geometry, integrable systems, and Poisson geometry. 

Barot and Marsh applied the idea in cluster algebras to give new presentations of finite irreducible crystallographic reflection groups, \cite{BM15}. 

Quiver mutations play an important role in the theory of cluster algebras and the work of Barot and Marsh \cite{BM15}. Finite type cluster algebras are classified by finite type Dynkin diagrams, \cite{FZ03}. The quivers appearing in the work of Barot and Marsh \cite{BM15} are
quivers of finite type (i.e., these quivers are mutation equivalent to orientations of finite
type Dynkin diagrams).

Let $\Gamma$ be a quiver whose underlying graph is a Dynkin diagram. In \cite{BM15}, Barot and Marsh gave new presentations of the reflection group $W_\Gamma$ determined by $\Gamma$ and showed that these presentations are compatible with mutations of $\Gamma$  quivers. More precisely, Barot and Marsh introduced some additional relations (cycle relations) corresponding to chordless cycles arising in quivers of finite type. For any quiver $Q$ of finite type, they defined an abstract group $W(Q)$ by generators (corresponding to vertices of $Q$) and relations and then proved that $W(Q) \cong W_\Gamma$.

Motivated by Barot and Marsh's work, new presentations of affine Coxeter groups, braid groups, Artin groups, and Weyl groups of Kac-Moody algebras have been studied using quiver mutations in \cite{FT13,FT14,GM14,HHLP,S14}, respectively.


One of the aims in this paper is to give new presentations of Boolean reflection monoids and show that these presentations are compatible with mutation of certain quivers. 

In \cite{EF10}, Everitt and Fountain introduced the concept of reflection monoids. The Boolean reflection monoids are reflection monoids. Symmetric inverse semigroups are Boolean reflection monoids of type $A$. In \cite{EF13}, Everitt and Fountain obtained presentations of the Boolean reflection monoids of type $A, B/C, D$.

Let $A^\varepsilon_{n-1}$ (respectively, $B^\varepsilon_n$, $D^\varepsilon_n$) be the Dynkin diagram with $n$ (respectively, $n+1$, $n+1$) vertices, among them the first $n-1$ (respectively, $n$, $n$) vertices are mutable vertices and the last vertex $\varepsilon$ is a frozen vertex, which is shown in the 4-th column of Table \ref{initial ABD}. We label an edge if its weight is greater or equal to 2.

Let $\Delta \in \{A^\varepsilon_{n-1},B^\varepsilon_n,D^\varepsilon_n\}$ and $Q$ any quiver that is mutation equivalent to a quiver whose underlying graph is $\Delta$. We define an inverse monoid $M(Q)$ from $Q$, see Section \ref{monoids and diagrams}, and show that $M(Q) \cong M(\Phi,\mathcal{B})$, where $M(\Phi,\mathcal{B})$ is the Boolean reflection monoid defined in \cite{EF10}, see Theorem \ref{mutation-invariance} and Theorem \ref{Everitt and Fountain's presentations}. This implies that Boolean reflection monoids can also be classified by $\Delta$, see Table \ref{initial ABD}. In \cite{BM15, FT13, GM14, HHLP}, the diagrams corresponding to presentations of irreducible Weyl groups, affine Coxeter groups, braid groups, Artin groups have no frozen vertices. In the present paper, the diagrams corresponding to presentations of Boolean reflection monoids have frozen vertices.

\begin{table}[H]
\resizebox{0.8\width}{0.8\height}{
\begin{tabular}{c c c c}
\hline %
Type of $\Phi$ & Boolean reflection monoids & Generators & $\Delta=\Phi^{\varepsilon}$ \\
\hline %
$A_{n-1} (n\geq 2)$ & $M(A_{n-1},\mathcal{B})$ & $\{s_1,\ldots,s_{n-1},s_\varepsilon\}$ &
$\substack{\\ \\ \\
\begin{tikzpicture}
\draw (0,0) circle (2pt);
\node [below] at (0,0) {$1$};
\draw (0.075,0)--(1.925,0);
\draw (2,0) circle (2pt);
\draw[dashed] (2.075,0)--(3.925,0);
\node [below] at (2,0) {$2$};
\draw (4,0) circle (2pt);
\draw (4.075,0)--(5.925,0);
\node [below] at (4,0) {$n-1$};
\draw[fill] (6,0) circle (2pt);
\node [below] at (6,0) {$\varepsilon$};
\end{tikzpicture}}$ \\
$B_n (n\geq 2)$ & $M(B_{n},\mathcal{B})$ & $\{s_0,\ldots,s_{n-1},s_\varepsilon\}$ &
$\substack{\\ \\  \\
\begin{tikzpicture}
\draw (0,0) circle (2pt);
\node [below] at (0,0) {$0$};
\draw (0.075,0)--(1.925,0);
\draw (2,0) circle (2pt);
\node [above] at (1,0) {$2$};
\draw[dashed] (2.075,0)--(3.925,0);
\node [below] at (2,0) {$1$};
\draw (4,0) circle (2pt);
\draw (4.075,0)--(5.925,0);
\node [below] at (4,0) {$n-1$};
\draw[fill] (6,0) circle (2pt);
\node [below] at (6,0) {$\varepsilon$};
\end{tikzpicture}}$ \\
$D_n (n\geq 4)$ & $M(D_{n},\mathcal{B})$ & $\{s_0,\ldots,s_{n-1},s_\varepsilon\}$ &
$\substack{ \\ \\ \\
\begin{tikzpicture}
\draw (0,-1) circle (2pt);
\node [below] at (0,-1) {$0$};
\draw (0.05,-0.95)--(1.95,-0.05);
\draw (0,1) circle (2pt);
\node [above] at (0,1) {$1$};
\draw (0.05,0.95)--(1.95,0.05);
\draw (2,0) circle (2pt);
\draw[dashed] (2.075,0)--(3.925,0);
\node [below] at (2,0) {$2$};
\draw (4,0) circle (2pt);
\draw (4.075,0)--(5.925,0);
\node [below] at (4,0) {$n-1$};
\draw[fill] (6,0) circle (2pt);
\node [below] at (6,0) {$\varepsilon$};
\end{tikzpicture}}$ \\
\hline %
\end{tabular}}
\caption{Boolean reflection monoids and Dynkin diagrams $A^\varepsilon_{n-1},B^\varepsilon_n,D^\varepsilon_n$.}\label{initial ABD}
\end{table}

In Proposition 3.1 of \cite{EF10}, Everitt and Fountain proved that the symmetric inverse
semigroup $\mathscr{I}_{n}$ is isomorphic to the Boolean reflection monoid of type $A_{n-1}$. We recover the presentation of the symmetric inverse semigroup $\mathscr{I}_{n}$ defined in \cite{E11,P61}. The presentation corresponds exactly to the presentation determined by $A^{\varepsilon}_{n-1}$. Moreover, we recover Everitt and Fountain's presentations of Boolean reflection monoids defined in Section 3 of \cite{EF13}, see Theorem \ref{Everitt and Fountain's presentations}. These presentations can be obtained from any $\Delta$ quiver by a finite sequence of mutations.

We show in Theorem \ref{inner automorphisms} that the inner automorphism group of the Boolean reflection monoid $M(\Phi,\mathcal{B})$ is naturally isomorphic to $W(\Phi)/Z(W(\Phi))$. Surprisingly, inner by diagram automorphisms of irreducible Weyl groups and Boolean reflection monoids can be constructed by a sequence of mutations preserving the same underlying diagrams, see Theorem \ref{mutate inner automorphisms of weyl groups} and Theorem \ref{mutate inner automorphisms} respectively.

Finally we study the cellularity of the semigroup algebras of Boolean reflection monoids. It is well known that Hecke algebras of finite type, $q$-Schur algebras, the Brauer algebra, the Temperley-Lieb algebras and partition algebras are cellular, see \cite{G06,G07,GL96,Xi99,Xi06}. Recently, the cellularity of semigroup algebras is investigated by East \cite{E06},  Wilox \cite{W07}, Guo and Xi \cite{GXi09}, and Ji and Luo \cite{JL16} respectively. Applying Geck's and East's results, we show that semigroup algebras of Boolean reflection monoids are cellular algebras, see Proposition \ref{cellularity of Boolean reflection monoid}. Moreover, we describe their cellular bases using the presentations we obtained.

Automorphisms of finite symmetric inverse semigroups are studied in \cite{ST97}. Coxeter arrangement monoids are defined in \cite{EF10,EF13} and studied in \cite{EF10,EF13,F03,FJ98}. Braid inverse monoids are defined in \cite{EL04} and studied in \cite{EL04,E07}. It would be interesting to study automorphisms of Boolean reflection monoids, presentations of Coxeter arrangement monoids and braid inverse monoids using the method in this paper. We plan to study these in future publications.

The paper is organized as follows. In Section \ref{preliminaries}, we recall some notions and background knowledge that will be used later. In Section \ref{reflection groups}, we recall Barot and Marsh's work about presentations of irreducible Weyl groups and then study inner by diagram automorphisms of irreducible Weyl groups (Theorem \ref{mutate inner automorphisms of weyl groups}). In Section \ref{main results}, we state our main results, Theorem \ref{mutation-invariance} and Theorem \ref{Everitt and Fountain's presentations}, which show that presentations of Boolean reflection monoids are compatible with quiver mutations. In Section \ref{diagrams of finite types}, we describe mutations of $\Delta$ quivers and the oriented cycles appearing in the mutations. In Section \ref{cycle relations and path relations}, we define the inverse monoid $M(Q)$. In Section \ref{the proof of main theorem}, we prove Theorem \ref{mutation-invariance}.

\section{Preliminaries} \label{preliminaries}

\subsection{Mutation of quivers}
Let $Q$ be a quiver with finitely many vertices and finitely many arrows that have no loops or oriented 2-cycles. Given a quiver $Q$, let $I$ be the set of its vertices and $Q^{op}$ its opposite quiver with the same set of vertices but with the reversed orientation for all the arrows. We consider a quiver with weight associated to a skew-symmetrizable matrix, see \cite{FZ03}. 

For each mutable vertex $k$ of $Q$, one can define a mutation of $Q$ at $k$, \cite{BGP73,FZ03}. This produces a new quiver denoted by $\mu_k(Q)$ which can be obtained from $Q$ in the following way \cite{FZ03}:
\begin{enumerate}
\item[(i)] The orientations of all edges incident to $k$ are reversed and their weights intact.

\item[(ii)] For any vertices $i$ and $j$ which are connected in $Q$ via a two-edge oriented path going through $k$, the quiver mutation $\mu_k$ affects the edge connecting $i$ and $j$ in the way shown in Figure \ref{mutation diagram}, where the weights $c$ and $c'$ are related by
    \[
    \pm\sqrt{c}  \pm\sqrt{c'}  = \sqrt{ab},
    \]
    where the sign before $\sqrt{c}$ (resp., $\sqrt{c'}$) is ``$+$" if $i$, $j$, $k$ form an oriented cycle in $Q$ (resp., in $\mu_k(Q)$), and is ``$-$" otherwise. Here either $c$ or $c'$ may be equal to $0$, which means no arrows between $i$ and $j$.
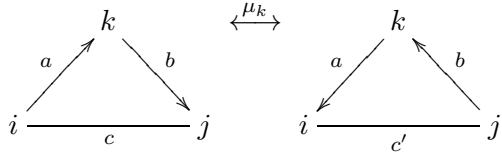
\begin{figure}[H]
\centerline{
$\xymatrix{ & k \ar@{->}[dr]^{b} & \\
i \ar@{->}[ur]^{a} &  & j \ar@{-}[ll]^{c}} \overset{\mu_{k}}{\longleftrightarrow}
\xymatrix{ & k \ar@{->}[dl]_{a} & \\
i \ar@{-}[rr]_{c'} &  & j \ar@{->}[ul]_{b}}$}
\caption{Quiver mutation}\label{mutation diagram}
\end{figure}

\item[(iii)] The rest of the edges and their weights in $Q$ remain unchanged.
\end{enumerate}

Two quivers $Q_1$ and $Q_2$ are said to be \textit{mutation equivalent} if there exists a finite sequence of mutations taking one to the other. We write $Q_1 \sim_{\text{mut}} Q_2$ to indicate that $Q_1$ is mutation equivalent to $Q_2$ and $\mathcal{U}(Q)$ for the mutation class of a quiver $Q$. 

The underlying diagram of a quiver $Q$ is a undirected diagram obtained from $Q$ by forgetting the orientation of all the arrows. Dynkin quivers are quivers whose underlying diagrams are Dynkin diagrams of finite type. A quiver $Q$ is called \textit{connected} if its underlying diagram is connected (every node is ``reachable"). It was shown in {\cite[Theorem 1.4]{FZ03}} that there are only finitely many quivers in the mutation classes of Dynkin quivers. A cycle in the underlying diagram of a quiver is called a \textit{chordless cycle} if no two vertices of the cycle are connected by an edge that does not belong to itself, see \cite{BM15}. As shown in {\cite[Proposition 9.7]{FZ03}} (or see {\cite[Proposition 2.1]{BM15}}), all chordless cycles are oriented in the mutation classes of Dynkin quivers.

\subsection{Cellular algebras and cellular semigroups}
Let us first recall the basic definition of cellular algebras introduced by Graham and Lehrer
\cite{GL96}.

Let $R$ be a commutative ring with identity.

\begin{definition}
An associative $R$-algebra $A$ is called a cellular
algebra with cell datum $(\Lambda,M,C,i)$ if the following conditions are satisfied:
\begin{itemize}
  \item[(C1)] $\Lambda$ is a finite partially ordered set. Associated with each $\lambda \in \Lambda$ there is a finite set $M(\lambda)$ of indices and there exists an $R$-basis $\{ C_{S,T}^{\lambda}\mid \lambda\in \Lambda; S,T\in M(\lambda)\}$ of $A$;
  \item[(C2)] $i$ is an $R$-linear anti-automorphism of $A$ with $i^2 = i$, which sends $C_{S,T}^{\lambda}$ to $C_{T,S}^{\lambda}$;
  \item[(C3)] For each $\lambda \in \Lambda$, $S,T \in M(\lambda)$, and each $a \in A$, \[
      a C_{S,T}^{\lambda} \equiv \sum_{S'\in M(\lambda)} r_a(S',S)C_{S',T}^{\lambda} \pmod{A(< \lambda)},
      \]
      where $r_a(S',S) \in R$ is independent of $T$ and $A (< \lambda)$ is the $R$-submodule of $A$
      generated by $\{C^{\mu}_{S'',T''}\mid \mu < \lambda, S'', T'' \in M(\mu)\}$.
\end{itemize}
\end{definition}

Cellular algebras provide a general framework for studying the representation theory of many important classes of algebras including Hecke algebras of finite type, $q$-Schur algebras, the Brauer algebra, the Temperley-Lieb algebras and partition algebras, see \cite{G06,G07,GL96,Xi99,Xi06}.

Recently, the cellularity of semigroup algebras is investigated by East \cite{E06},  Wilox \cite{W07}, Guo and Xi \cite{GXi09}, and Ji and Luo \cite{JL16} respectively. In the following, we recall some basic notions and facts from the theory of semigroups.

Let $S$ be a semigroup. For any $a, b\in S$, define
\[
a\ \mathcal{L} \ b  \Leftrightarrow   S^1a=S^1b, \quad
a\ \mathcal{R} \ b  \Leftrightarrow   aS^1=bS^1, \quad
a\ \mathcal{J} \ b  \Leftrightarrow   S^1aS^1=S^1bS^1,
\]
$\mathcal{H} = \mathcal{L}\cap \mathcal{R}$ and $\mathcal{D} = \mathcal{L}\vee \mathcal{R}=\mathcal{L}
\circ \mathcal{R}=\mathcal{R}\circ \mathcal{L}$, where $S^1$ is the monoid obtained from $S$ by adding an identity if necessary.  A semigroup $S$ is said to be \textit{inverse} if for each element $s \in S$, there exists a unique inverse $s^{-1} \in S$ such that $ss^{-1}s=s$ and $s^{-1}ss^{-1}=s^{-1}$. If $S$ is a finite inverse semigroup, then $\mathcal{D} = \mathcal{J}$. For $a$ in a semigroup $S$, denote by $H_a$ (respectively, $L_a$, $R_a$, $D_a$, $J_a$) the $\mathcal{H}$ (respectively, $\mathcal{L}$, $\mathcal{R}$, $\mathcal{D}$, $\mathcal{J}$)-class of $a$. For any $s, t\in S$, define $D_s \leq D_t$ if and only if $s \in S^1tS^1$.

Let $S$ be an inverse semigroup with the set $E(S)$ of idempotents. Suppose that $e_1,\ldots,e_k \in D \cap E(S)$. Choose $a_1=e_1, \ldots,a_k \in L_{e_1}$ such that $a_j\;R\;e_j$ for each $j$. Then $D = R_{e_1}\cup R_{a_2} \cup \dots \cup R_{a_k}$. Put $H_D = H_{e_1}$ and by Green's Lemma, for each $x\in D$ we have $x = a_i g a^{-1}_j$ for unique $1 \leq i,j\leq k$ and $g\in H_D$. Using East's symbol in \cite{E06}, let $[e_i, e_j, g]_D$ be the element $x$. Then $x^{-1}=[e_j, e_i, g^{-1}]_D$. For more detailed knowledge about semigroups, the reader is referred to \cite{E06,H95}.

A semigroup $S$ is said to be \textit{cellular} if its semigroup algebra $R[S]$ is a cellular algebra. In \cite{E06}, East proved the following theorem:

\begin{theorem}[\protect{\cite[Theorem 15]{E06}}]\label{East's results}
Let $S$ be a finite inverse semigroup with the set $E(S)$ of idempotents. If $S$ satisfies
the following conditions:
\begin{itemize}
  \item[(1)] For each $\mathcal{D}$-class $D$, the subgroup $H_D$ is cellular with cell datum $(\Lambda_D,M_D,C_D,i_D)$;
  \item[(2)] The map $i: R[S] \to R[S]$ sending $[e,f,g]_D$ to $[f,e,i_D(g)]_D$ is an $R$-linear anti-homomorphism.
\end{itemize}
Then $S$ is a cellular semigroup with cell datum $(\Lambda,M,C,i)$, where $\Lambda = \{(D,\lambda) \mid D \in S/\mathcal{D}, \lambda \in \Lambda_{D} \}$ with partial order defined by
$(D,\lambda) \leq (D',\lambda')$ if $D < D' \text{ or } D = D' \text{ and } \lambda \leq \lambda'$, $M(D,\lambda) = \{ (e,s)\mid e\in E(S)\cap D, s \in M_{D}(\lambda) \}$ for $(D,\lambda) \in \Lambda$, and $C = \{ C^{(D,\lambda)}_{(e,s),(f,t)}=[e,f,C^{\lambda}_{s,t}]_D \mid (D,\lambda) \in \Lambda; (e,s), (f,t) \in M(D,\lambda)\}$ for $(D,\lambda) \in \Lambda$ and $(e,s), (f,t) \in M(D,\lambda)$.
\end{theorem}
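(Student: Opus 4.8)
The plan is to deduce the cellularity of $R[S]$ from the structure of the principal factors of $S$. Since $S$ is a finite inverse semigroup, each $\mathcal{D}=\mathcal{J}$-class $D$ yields a principal factor that is a Brandt semigroup, i.e.\ a Rees matrix semigroup over the group $H_D$ with identity sandwich matrix. The first step is to record the multiplication in the coordinates $[e_i,e_j,g]_D$. Using $a_i^{-1}a_i=e_1$, $a_ia_i^{-1}=e_i$, and the fact that $e_1$ is the identity of $H_D=H_{e_1}$, one finds within $D$ that $[e_i,e_j,g]_D[e_k,e_l,h]_D=[e_i,e_l,gh]_D$ when $j=k$, and otherwise the product drops into a strictly lower $\mathcal{D}$-class (the ``$0$'' of the Brandt semigroup). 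More generally, for an arbitrary $a\in S$ I would analyse $a\,[e_i,e_j,g]_D=a\,a_i\,g\,a_j^{-1}$ through the product $a\,a_i$: since $a\,a_i\le_{\mathcal{L}}a_i$, either $a\,a_i\,\mathcal{D}\,a_i$, whence $a\,a_i\,\mathcal{L}\,a_i$ by finiteness, so that $a\,a_i=a_r h_0$ for a unique row index $r$ and $h_0\in H_D$ and therefore $a\,[e_i,e_j,g]_D=[e_r,e_j,h_0g]_D$; or $a\,a_i$ lies in a strictly lower $\mathcal{D}$-class, in which case so does $a\,[e_i,e_j,g]_D$. The decisive feature is that left multiplication never changes the right index $e_j$ modulo lower $\mathcal{D}$-classes, and that $e_r$, $h_0$ depend only on $a$ and the left index $e_i$.

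For (C1), Green's Lemma gives, for each $D$, a bijection between $D$ and the triples $(e,f,g)$ with $e,f\in E(S)\cap D$ and $g\in H_D$, so $\{[e,f,g]_D\}$ ranges exactly over $S$ and is an $R$-basis of $R[S]$. Fixing $D,e,f$, the map $g\mapsto[e,f,g]_D$ identifies $R[H_D]$ $R$-linearly with the span of these elements; substituting the cellular $R$-basis $\{C^{\lambda}_{s,t}\}$ of $R[H_D]$ shows $\{[e,f,C^{\lambda}_{s,t}]_D\}$ is an $R$-basis, and taking the union over all $D,e,f$ produces the stated basis indexed by $\Lambda$ and the sets $M(D,\lambda)$. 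Finiteness of $S$ guarantees all index sets are finite, and one checks that the order on $\Lambda$ is a genuine partial order because $\mathcal{D}=\mathcal{J}$ is a partial order on $\mathcal{D}$-classes. For (C2), on basis elements $i\bigl(C^{(D,\lambda)}_{(e,s),(f,t)}\bigr)=[f,e,i_D(C^{\lambda}_{s,t})]_D=[f,e,C^{\lambda}_{t,s}]_D=C^{(D,\lambda)}_{(f,t),(e,s)}$, which is the required index swap; since $i_D$ is an involution this gives $i^2=\mathrm{id}$, and $i$ is an anti-homomorphism by hypothesis~(2), hence an anti-automorphism.

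The core is (C3), for which it suffices to take $a\in S$ and a single basis element $x=[e,f,C^{\lambda}_{s,t}]_D$. If $a$ drops the left coordinate to a lower $\mathcal{D}$-class, then $a\,x\in R[S](<(D,\lambda))$ and there is nothing to prove. Otherwise, by the first step $a\,[e,f,w]_D=[e',f,h_0w]_D$ for every $w\in R[H_D]$, with $e',h_0$ depending only on $a$ and the left data, so $a\,x=[e',f,h_0\,C^{\lambda}_{s,t}]_D$. Applying the cellular axiom (C3) for the group $H_D$ gives $h_0\,C^{\lambda}_{s,t}\equiv\sum_{s'}r_{h_0}(s',s)\,C^{\lambda}_{s',t}\pmod{R[H_D](<\lambda)}$, with coefficients independent of $t$. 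Substituting, and noting that $[e',f,\,\cdot\,]_D$ carries $R[H_D](<\lambda)$ into the span of the $C^{(D,\mu)}$ with $\mu<\lambda$, hence into $R[S](<(D,\lambda))$, we obtain $a\,x\equiv\sum_{s'}r_{h_0}(s',s)\,C^{(D,\lambda)}_{(e',s'),(f,t)}\pmod{R[S](<(D,\lambda))}$. The coefficient $r_{h_0}(s',s)$ depends on $(e',s')$ and $(e,s)$ but not on $(f,t)$, which is exactly~(C3).

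The main obstacle is the structural step in the first paragraph: showing that left multiplication by $a$ either preserves the right index $f$ and the column datum $t$ while acting only on the row/group coordinates, or else pushes everything into strictly lower $\mathcal{D}$-classes. This rests on Green's relations in a finite inverse semigroup — specifically that $a\,a_i\le_{\mathcal{L}}a_i$ together with $\mathcal{D}=\mathcal{J}$ forces $a\,a_i\,\mathcal{L}\,a_i$ whenever the product stays in $D$ — and on the Brandt semigroup description of the principal factor that makes the decomposition $a\,a_i=a_r h_0$ unique. Once this is in place the two remaining subtleties, namely that the lower-$\lambda$ terms coming from $R[H_D]$ and the dropped $\mathcal{D}$-classes both lie in $R[S](<(D,\lambda))$, and that the composite order on $\Lambda$ is a partial order, are routine, and the cell datum $(\Lambda,M,C,i)$ satisfies (C1)--(C3).
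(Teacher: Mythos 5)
The paper itself gives no proof of this statement---it is quoted from Theorem 15 of East \cite{E06}---and your argument correctly reconstructs East's proof along essentially the same lines: Green's-Lemma coordinates $[e,f,g]_D$ on each $\mathcal{D}$-class, stability of the finite semigroup to show that left multiplication by $a\in S$ either drops everything into a strictly lower $\mathcal{D}$-class or acts only on the row and group coordinates (with $e'$ and $h_0$ independent of the column data $(f,t)$), and blockwise transfer of the cellular axioms of each $R[H_D]$. The proposal is correct, with the key structural step and the containments in $R[S](<(D,\lambda))$ all justified soundly.
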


By the definition of cellular algebras, we have the following corollary.
\begin{corollary}\label{new cellular bases}
Suppose that $A$ is a cellular algebra over $R$ with cell datum $(\Lambda,M,C,i)$ and $\overline{\varphi}$ is an $R$-linear automorphism of $A$. Let $\overline{C}^{\lambda}_{S,T}=\overline{\varphi}(C^{\lambda}_{S,T})$ for any $\lambda \in \Lambda$, $(S,T) \in M(\lambda) \times M(\lambda)$,
and $\overline{i} = \overline{\varphi} i \overline{\varphi}^{-1}$. Then $(\Lambda,M,\overline{C},\overline{i})$ is a cellular datum of $A$.
\end{corollary}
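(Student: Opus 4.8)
The plan is to verify the three cellular axioms (C1)--(C3) for the transformed datum $(\Lambda,M,\overline{C},\overline{i})$ one at a time, the only inputs being that $\overline{\varphi}$ is an $R$-linear bijection of $A$ that respects multiplication and that $(\Lambda,M,C,i)$ already satisfies (C1)--(C3). The single fact that makes everything go through cleanly is that $\overline{\varphi}$ carries the defining filtration of $A$ onto itself in the appropriate sense: since $\overline{\varphi}$ is $R$-linear and bijective and $\overline{C}^{\mu}_{S'',T''}=\overline{\varphi}(C^{\mu}_{S'',T''})$, the submodule $\overline{A}(<\lambda)$ spanned by $\{\overline{C}^{\mu}_{S'',T''}\mid \mu<\lambda\}$ is exactly $\overline{\varphi}(A(<\lambda))$. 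I would establish this identity first and use it throughout.

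For (C1), the poset $\Lambda$ and the index sets $M(\lambda)$ are unchanged, so only the basis condition needs attention; but $\{\overline{C}^{\lambda}_{S,T}\}$ is the image of the $R$-basis $\{C^{\lambda}_{S,T}\}$ under the $R$-linear bijection $\overline{\varphi}$, hence is again an $R$-basis of $A$. For (C2), I would compute directly: $\overline{i}^{2}=\overline{\varphi}\,i\,\overline{\varphi}^{-1}\overline{\varphi}\,i\,\overline{\varphi}^{-1}=\overline{\varphi}\,i^{2}\,\overline{\varphi}^{-1}=\Id$, and $\overline{i}$ is an $R$-linear anti-automorphism because it is the conjugate of the anti-automorphism $i$ by the (multiplicative) automorphism $\overline{\varphi}$. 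Finally, evaluating on the new basis gives $\overline{i}(\overline{C}^{\lambda}_{S,T})=\overline{\varphi}\,i\,\overline{\varphi}^{-1}\big(\overline{\varphi}(C^{\lambda}_{S,T})\big)=\overline{\varphi}\big(i(C^{\lambda}_{S,T})\big)=\overline{\varphi}(C^{\lambda}_{T,S})=\overline{C}^{\lambda}_{T,S}$, as required.

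For (C3), the main computation, I would fix $a\in A$ and set $b=\overline{\varphi}^{-1}(a)$. Using that $\overline{\varphi}$ is multiplicative, $a\,\overline{C}^{\lambda}_{S,T}=\overline{\varphi}(b)\,\overline{\varphi}(C^{\lambda}_{S,T})=\overline{\varphi}\big(b\,C^{\lambda}_{S,T}\big)$. Applying the original axiom (C3) to $b\,C^{\lambda}_{S,T}$ and then the $R$-linear map $\overline{\varphi}$, and using $\overline{\varphi}(A(<\lambda))=\overline{A}(<\lambda)$, I obtain
\[
a\,\overline{C}^{\lambda}_{S,T}\;\equiv\;\sum_{S'\in M(\lambda)} r_{b}(S',S)\,\overline{C}^{\lambda}_{S',T}\pmod{\overline{A}(<\lambda)}.
\]
Setting $\overline{r}_{a}(S',S):=r_{\overline{\varphi}^{-1}(a)}(S',S)$ gives the required coefficients, and they are independent of $T$ because the original $r_{b}(S',S)$ are.

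The verification is essentially bookkeeping, so there is no serious obstacle; the one point worth flagging is that (C2) and (C3) genuinely require $\overline{\varphi}$ to preserve products, not merely to be an $R$-module isomorphism. The anti-automorphism property of $\overline{i}$ and the factorization $a\,\overline{C}^{\lambda}_{S,T}=\overline{\varphi}(b\,C^{\lambda}_{S,T})$ both break down for a bare $R$-linear bijection, so I would make sure the $R$-linear automorphism $\overline{\varphi}$ is understood to be multiplicative (as it is in all the applications in this paper, where it is induced by an automorphism of the underlying monoid).
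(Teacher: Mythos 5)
Your proof is correct and follows essentially the same route as the paper's: transport the basis by $\overline{\varphi}$, conjugate $i$ to get $\overline{i}$, and verify (C3) by writing $a\,\overline{C}^{\lambda}_{S,T}=\overline{\varphi}\bigl(b\,C^{\lambda}_{S,T}\bigr)$ with $b=\overline{\varphi}^{-1}(a)$ and applying the original axiom to $b$. The two refinements you flag --- that the congruence must be read modulo $\overline{\varphi}(A(<\lambda))$, which is exactly the span of the new elements $\overline{C}^{\mu}_{S'',T''}$ with $\mu<\lambda$, and that $\overline{\varphi}$ must preserve products, not just be $R$-linear --- are both used implicitly and without comment in the paper's proof, so your explicit treatment is a welcome (but not essentially different) sharpening.
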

\begin{proof}
Since $\{C^{\lambda}_{S,T}\mid \lambda \in \Lambda, (S,T) \in M(\lambda) \times M(\lambda)\}$ is an $R$-basis of $A$ and $\overline{\varphi}$ is an $R$-linear automorphism of $A$, we have the fact that $\{\overline{C}^{\lambda}_{S,T} \mid \lambda \in \Lambda, (S,T) \in M(\lambda) \times M(\lambda)\}$ is also an $R$-basis of $A$. It follows from the definition of $\overline{i}$ that $\overline{i}^2=\overline{i}$ and  $\overline{i}(\overline{C}^{\lambda}_{S,T})=\overline{C}^{\lambda}_{T,S}$.

For each $\lambda \in \Lambda$, $S,T \in M(\lambda)$, and each $a \in A$,
\[
a C_{S,T}^{\lambda} \equiv \sum_{S'\in M(\lambda)} r_a(S',S)C_{S',T}^{\lambda} \pmod{A(< \lambda)},
\]
where $r_a(S',S) \in R$ is independent of $T$ and $A (< \lambda)$ is the $R$-submodule of $A$ generated by $\{C^{\mu}_{S'',T''}\mid \mu < \lambda, S'', T'' \in M(\mu)\}$. For each $a \in A$, there exists a $b \in A$ such that $a=\overline{\varphi}(b)$. Then
\begin{align*}
a \overline{C}^{\lambda}_{S,T} = \overline{\varphi}(b) \overline{\varphi}(C_{S,T}^{\lambda}) = \overline{\varphi}(b C_{S,T}^{\lambda}) & \equiv \sum_{S'\in M(\lambda)} r_b(S',S)
\overline{\varphi}(C_{S',T}^{\lambda}) \pmod{A(< \lambda)} \\
&  \equiv \sum_{S'\in M(\lambda)} r_b(S',S)
\overline{C}^{\lambda}_{S',T} \pmod{A(< \lambda)},
\end{align*}
where $r_b(S',S) \in R$ is independent of $T$. Therefore $(\Lambda,M,\overline{C},\overline{i})$ is a cellular basis of $A$, as required.
\end{proof}

\section{Some new results of irreducible Weyl groups}\label{reflection groups}
In this section, we recall Barot and Marsh's work about presentations of irreducible Weyl groups and then study inner by diagram automorphisms of irreducible Weyl groups.
We refer the reader to \cite{B02, FR08, H90, M13} for more information about Weyl groups, root systems, and reflection groups.

\subsection{Barot and Marsh's results}
It is well known that finite irreducible crystallographic reflection groups or irreducible Weyl groups are classified by Dynkin diagrams, see \cite{H90}. Let $\Gamma$ be a Dynkin diagram and $I$ the set of its vertices. Let $W_\Gamma$ be the finite irreducible Weyl group determined by $\Gamma$. A quiver is called a $\Gamma$ quiver if its underlying diagram is $\Gamma$. In \cite{BM15}, Barot and Marsh gave presentations of $W_\Gamma$. Their construction works as follows. Let $Q$ be a quiver that is mutation equivalent to a $\Gamma$ quiver. For two vertices $i$, $j$ of $Q$, one defines
\begin{align}\label{definition of M}
m_{ij}=\begin{cases}
2 & \textrm{$i$ and $j$ are not connected},\\
3 & \textrm{$i$ and $j$ are connected by an edge with weight $1$},\\
4 & \textrm{$i$ and $j$ are connected by an edge with weight $2$},\\
6 & \textrm{$i$ and $j$ are connected by an edge with weight $3$}.
\end{cases}
\end{align}

\begin{definition}\label{definition of groups}
Let $W(Q)$ be the group with generators $s_i$, $i\in I$, subjecting to the following relations:
\begin{itemize}
  \item[(1)] $s^{2}_{i}=e$ for all $i$;
  \item[(2)] $(s_is_j)^{m_{ij}}=e$ for all $i \neq j$;
  \item[(3)] For any chordless cycle $C$ in $Q$:
  \begin{align*}
  i_0 \overset{\omega_1}{\longrightarrow} i_1 \overset{\omega_2}{\longrightarrow} \cdots \overset{\omega_{d-1}}{\longrightarrow} i_{d-1}
  \overset{\omega_0}{\longrightarrow} i_0,
  \end{align*}
  where either all of the weights are $1$, or $\omega_0=2$, we have:
  $$(s_{i_0}s_{i_1}\cdots s_{i_{d-2}}s_{i_{d-1}}s_{i_{d-2}}\cdots s_{i_1})^2=e;$$
\end{itemize}
where $e$ is the identity element of $W(Q)$.
\end{definition}

One of Barot and Marsh's main results in \cite{BM15} is stated as follows.
\begin{theorem}[{\cite[Theorem A]{BM15}}]\label{Mash's result}
The group $W(Q)$ does not depend on the choice of a quiver in $\mathcal{U}(Q)$. In particular, $W(Q) \cong W_\Gamma$ for each quiver $Q$ mutation equivalent to a $\Gamma$ quiver.
\end{theorem}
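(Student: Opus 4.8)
The plan is to decompose the statement into two parts: \emph{mutation invariance} and a base-case identification. For the base case, observe that a finite-type Dynkin diagram $\Gamma$ is a tree, so a $\Gamma$ quiver has no chordless cycles; hence in the presentation of $W(\Gamma)$ relations (3) are vacuous while relations (1)--(2), with the $m_{ij}$ read off from the weights as in \eqref{definition of M}, are precisely the standard Coxeter presentation of $W_\Gamma$. This gives $W(\Gamma)\cong W_\Gamma$ immediately. Thus the whole content lies in showing that $W(Q)$ is unchanged, up to a canonical isomorphism, under a single mutation $\mu_k$ at a mutable vertex $k$; iterating along any mutation sequence and invoking connectedness of the mutation class $\mathcal{U}(Q)$ then yields both assertions at once.

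For the mutation step, write $Q'=\mu_k(Q)$ and let $s_i$, $t_i$ ($i\in I$) denote the generators of $W(Q)$ and $W(Q')$ respectively. I would define a candidate homomorphism $\phi\colon W(Q')\to W(Q)$ on generators by
\[
\phi(t_i)=\begin{cases} s_k s_i s_k & \text{if there is an arrow } i\to k \text{ in } Q,\\ s_i & \text{otherwise,}\end{cases}
\]
so in particular $\phi(t_k)=s_k$. Since $\mu_k$ is an involution, the analogously defined map $\psi\colon W(Q)\to W(Q')$, built from the arrows into $k$ in $Q'$, is a two-sided inverse on generators; hence once $\phi$ and $\psi$ are shown to be well-defined homomorphisms, they are automatically mutually inverse isomorphisms. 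To prove $\phi$ is well-defined I would check that it sends each defining relation of $W(Q')$ to the identity of $W(Q)$.

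The verification splits according to the position of the vertices relative to $k$, and this case analysis is the main obstacle. Relation (1) is immediate, since a conjugate of an involution is an involution. For relation (2), pairs $i,j$ with neither adjacent to $k$ are untouched; pairs involving $k$ use that mutation reverses orientations but leaves weights intact, so $m_{ik}$ is the same in $Q$ and $Q'$; the delicate pairs are those with $i,j$ both adjacent to $k$, where the weight of the $i$--$j$ edge changes according to the mutation rule $\pm\sqrt{c}\pm\sqrt{c'}=\sqrt{ab}$, producing a new value $m'_{ij}$ that must be matched against a relation forced in $W(Q)$ — and here a short cycle through $k$ typically supplies, via relation (3), exactly the identity needed.

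The genuinely hard part is relation (3): under $\mu_k$ a chordless cycle of $Q'$ may be created, destroyed, lengthened, or shortened relative to $Q$, and one must show every cycle relation of $W(Q')$ is a consequence of relations (1)--(3) in $W(Q)$. I would organize this by classifying how a chordless cycle $C'$ of $Q'$ meets the mutation vertex $k$: either $k\notin C'$, in which case $C'$ is inherited from $Q$ (possibly with one weight altered by a triangle through $k$), or $k\in C'$, in which case $C'$ corresponds to a path in $Q$ together with the two-edge detour through $k$. In each configuration I would rewrite the image under $\phi$ of the cycle word, using the involution and braid relations already verified, into a cycle word of $Q$ whose relation holds by hypothesis. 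The finiteness of $\mathcal{U}(Q)$ and the fact, cited above from \cite{FZ03} (see also \cite{BM15}), that all chordless cycles in the mutation class are oriented keep this classification finite and the weight bookkeeping consistent; completing every case shows $\phi$ is a homomorphism, and symmetrically $\psi$, which finishes the proof.
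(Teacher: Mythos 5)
You should first note that the paper itself offers no proof of this statement: it is quoted verbatim as Theorem A of \cite{BM15}, and the closest argument in the paper is the parallel one for Boolean reflection monoids (Theorem \ref{mutation-invariance}, proved in Section \ref{the proof of main theorem} via Proposition \ref{inverse monoids homomorphism} together with Proposition \ref{without orient}). Your overall strategy --- the tree base case, reduction to a single mutation, the conjugation formula for $\phi$ on generators, and a case analysis of neighbours of $k$ and chordless cycles --- is exactly the strategy of \cite{BM15} and of the paper's monoid analogue, so the outline is the right one.

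Two points need flagging. First, a step that fails as written: if $\psi\colon W(Q)\to W(Q')$ is defined ``analogously'' from the arrows \emph{into} $k$ in $Q'$, it is not a two-sided inverse of $\phi$ on generators. Since mutation reverses every arrow incident to $k$, one computes $\psi(\phi(t_i))=t_k t_i t_k$ for every $i$ adjacent to $k$ (in either direction) and $\psi(\phi(t_i))=t_i$ otherwise; as $t_k$ commutes with the generators not adjacent to $k$, the composite $\psi\circ\phi$ is conjugation by $t_k$, an inner automorphism, not the identity. This still suffices --- both composites are automorphisms, hence $\phi$ and $\psi$ are bijective --- but your ``automatically mutually inverse'' claim needs that repair. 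The alternative repair, the one actually used in \cite{BM15} and mirrored in this paper, is to define the reverse map from the arrows \emph{out of} $k$ in $Q'$; then the composites are the identity on the nose, but well-definedness of that map is no longer symmetric to that of $\phi$ and requires the separate fact that the presentation is invariant under reversing all arrows of the quiver (Proposition 4.6 of \cite{BM15}; Proposition \ref{without orient} is the monoid version here) --- an ingredient your sketch omits entirely. Second, the verification that $\phi$ preserves relations (2) for pairs of neighbours of $k$ and relations (3) for every chordless cycle is the actual mathematical content of the theorem; in \cite{BM15} it rests on a classification of how chordless cycles sit relative to $k$ (compare Lemma \ref{diagrams with frozen} for the version with a frozen vertex) and occupies most of the proof, whereas your proposal only describes how this case analysis would be organized. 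As a plan it is sound; as a proof it is incomplete precisely at the hard step.
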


\subsection{Inner by diagram automorphisms of irreducible Weyl groups}
Let $W$ be a Coxeter group defined by a set $S$ of generators and relations (1) and (2) of Definition \ref{definition of groups}. The pair $(W,S)$ is called a Coxeter system of $W$. In what follows, given any two Coxeter systems $(W, S_1)$ and $(W, S_2)$, when we say that there exists an automorphism of $W$, we mean that there is an automorphism $\alpha\in \text{Aut}(W)$ such that $\alpha(S_1) = S_2$. If such an automorphism can always be chosen from $\text{Inn}(W)$, the group of inner automorphisms of $W$, then $W$ is called \textit{strongly rigid}. In case $W$ is strongly rigid, the group $\text{Aut}(W)$ has a very simple structure (see Corollary 3.2 of \cite{CD00}):
\begin{align*}
\text{Aut}(W)=\text{Inn}(W) \times \text{Diag}(W),
\end{align*}
where $\text{Diag}(W)$ consists of diagram automorphisms of the unique Coxeter diagram corresponding to $W$.

The following lemma is well known.
\begin{lemma}\label{the form of reflections}
Let $W$ be a finite group generated by a finite set $S$ of simple reflections. Then the set of all reflections in $W$ is $\{wsw^{-1}\mid w \in W, s \in S\}$.
\end{lemma}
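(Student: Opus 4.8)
The plan is to prove Lemma \ref{the form of reflections}, which characterizes the reflections of a finite reflection group $W$ generated by simple reflections $S$ as exactly the conjugates $\{wsw^{-1} \mid w \in W, s \in S\}$.

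First I would fix the geometric setup underlying $W$. Since $W$ is a finite group generated by simple reflections, it acts faithfully as a reflection group on a real Euclidean space $V$, with a root system $\Phi$ whose simple roots $\{\alpha_s \mid s \in S\}$ correspond to the simple reflections. By definition, a \emph{reflection} in $W$ is an element fixing a hyperplane pointwise and acting as $-1$ on its orthogonal complement; equivalently, it is an element of the form $s_\alpha$ (the orthogonal reflection in the hyperplane $\alpha^\perp$) for some root $\alpha \in \Phi$. So I would set the goal as showing that the set of all $s_\alpha$, $\alpha \in \Phi$, coincides with the conjugacy-closed set $\{w s_{\alpha_t} w^{-1} \mid w \in W,\ t \in S\}$.

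The key computation is the conjugation formula $w s_\alpha w^{-1} = s_{w(\alpha)}$, valid for any orthogonal transformation $w$ and any root $\alpha$; this is immediate because $w$ carries the fixed hyperplane $\alpha^\perp$ to $(w\alpha)^\perp$ and the $(-1)$-eigenline to its image. From this, the inclusion $\{w s w^{-1}\} \subseteq \{\text{reflections}\}$ is clear, since each simple reflection $s = s_{\alpha_s}$ is a reflection and the conjugate $w s_{\alpha_s} w^{-1} = s_{w(\alpha_s)}$ is again a reflection (in the root $w(\alpha_s)$). For the reverse inclusion I would use the standard fact that $W$ acts transitively on roots of a given length within each orbit, and more precisely that every root lies in the $W$-orbit of a simple root: given an arbitrary reflection $s_\beta$ with $\beta \in \Phi$, there exists $w \in W$ and a simple root $\alpha_t$ with $w(\alpha_t) = \beta$, whence $s_\beta = w s_{\alpha_t} w^{-1} = w t w^{-1}$ lies in the claimed set.

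The main obstacle — really the only substantive point — is justifying that every root is $W$-conjugate to a simple root. I would invoke the standard theory of root systems: writing any $\beta \in \Phi$ in terms of the simple roots and using that $W$ is generated by the simple reflections together with the fact that one can always decrease the height of a positive root by applying a simple reflection, an induction on height shows that the $W$-orbit of $\beta$ meets the set of simple roots. This is precisely the content of the classical result (e.g.\ in \cite{H90}) that the simple reflections act transitively on the Weyl chambers and that $W$ permutes $\Phi$ with every root conjugate to a simple one. Assembling the two inclusions then yields the stated equality, completing the proof.
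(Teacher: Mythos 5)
The paper never proves this lemma: it is introduced with the sentence ``The following lemma is well known'' and is used as a black box, so there is no in-paper proof to compare yours against. Your argument is the standard one and its skeleton is sound: the conjugation formula $w s_\alpha w^{-1} = s_{w(\alpha)}$ gives the inclusion of conjugates of simple reflections into reflections, and the statement that every root lies in the $W$-orbit of a simple root gives the converse.

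One caveat about where the real work sits. You assert that a reflection in $W$ is ``equivalently'' an element of the form $s_\alpha$ with $\alpha\in\Phi$, and then call the orbit statement the only substantive point. That equivalence is itself one of the two nontrivial halves of the lemma, and which half carries the weight depends on how $\Phi$ is defined. If $\Phi$ is taken to be the $W$-orbit of the simple roots, then ``every root is $W$-conjugate to a simple root'' is trivial, and the genuine content is that $W$ contains \emph{no} reflections other than the $s_\alpha$, $\alpha\in\Phi$; this is where finiteness and the chamber geometry enter (an element of $W$ fixing a point of an open chamber is the identity, by simple transitivity on chambers), and it is exactly the result proved in \cite{H90} (Section 1.14). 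If instead $\Phi$ is defined as the set of unit normals to the hyperplanes of all reflections lying in $W$, then your ``equivalently'' is definitional, but the height induction you sketch then needs the nontrivial fact that every such root is a same-sign linear combination of the simple roots associated to $S$, which is again part of the theory being quoted rather than something free. So your proposal is correct as a citation-level argument, but the step you label as a reformulation is a theorem, and a careful write-up should cite it as such rather than fold it into the definition.
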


In \cite[Table I]{B69}, Bannai computed the center $Z(W(\Phi))$ of an irreducible Weyl group $W(\Phi)$. The longest element $w_0$ in $W(\Phi)$ is a central element of $W(\Phi)$ except for $\Phi=A_{n} (n\geq 2),\; D_{2k+1} (k\geq 2),\; E_{6}$.

The following important notion was introduced by Franzsen in \cite{Fran01}.
\begin{definition}[{\cite[Definition 1.36]{Fran01}}]
An inner by diagram automorphism is an automorphism generated by some inner and diagram automorphisms in $\text{Aut}(W)$. The subgroup of inner automorphisms is a normal subgroup of $\text{Aut}(W)$, therefore any inner by diagram automorphism can be written as the product of an inner and a diagram automorphism.
\end{definition}

By Proposition 1.44 of \cite{Fran01}, we know that all automorphisms of irreducible Weyl groups that preserve reflections are inner by diagram automorphisms.



The following theorem reveals a connection between inner by diagram automorphisms of irreducible Weyl groups and quiver mutations.

\begin{theorem}\label{mutate inner automorphisms of weyl groups}
Let $Q$ be a $\Gamma$ quiver and $W(Q)$ the corresponding Weyl group generated by a set $S$ of simple reflections. Then $\alpha$ is an inner by diagram automorphism of $W(Q)$ if and only if
there exists a sequence of mutations preserving the underlying diagram $\Gamma$ such that $\alpha(S)$ can be obtained from $Q$ by mutations. In particular, all reflections in $W(Q)$ can be obtained from $Q$ by mutations.
\end{theorem}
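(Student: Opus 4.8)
The plan is to convert the statement into the reflection criterion recorded after Lemma~\ref{the form of reflections}. On the one hand, any inner by diagram automorphism preserves reflections: an inner automorphism sends a reflection $wsw^{-1}$ to another conjugate of a simple reflection, and a diagram automorphism permutes $S$ and hence permutes the set $\{wsw^{-1}\mid w\in W(Q),\, s\in S\}$, which by Lemma~\ref{the form of reflections} is exactly the set of reflections. Conversely, Proposition~1.44 of \cite{Fran01} asserts that every automorphism preserving reflections is inner by diagram. Thus $\alpha$ is inner by diagram if and only if each $\alpha(s)$, $s\in S$, is a reflection. Moreover, writing an inner by diagram $\alpha$ as $\alpha=c_w\circ\delta$, where $c_w\colon x\mapsto wxw^{-1}$ is conjugation by $w\in W(Q)$ and $\delta$ is a diagram automorphism, and using that $\delta$ fixes $S$ setwise, we get $\alpha(S)=c_w(S)=wSw^{-1}$. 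Hence the whole theorem reduces to the single assertion that the generating sets obtainable from $Q$ by $\Gamma$-preserving mutations are exactly the $W(Q)$-conjugates $wSw^{-1}$ of the simple system.

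The first key step is to pin down the $\Gamma$-preserving mutations. Since $\Gamma$ is a tree, an oriented two-path $i\to k\to j$ would force part (ii) of the mutation rule to create an edge between $i$ and $j$ and thereby a cycle, changing the underlying diagram; so a mutation keeps the diagram $\Gamma$ precisely when no oriented two-path passes through the mutated vertex, that is, when the vertex is a source or a sink, in which case only the reversal of arrows in part (i) occurs. The central local computation I would then carry out is that mutating at such a source or sink $k$ transforms the current generating set $T=\{t_i\}$ by conjugation by its own member $t_k$: the generators at vertices adjacent to $k$ are conjugated by $t_k$, while those at non-adjacent vertices commute with $t_k$ and so are also fixed by $c_{t_k}$. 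Tracking the conjugator along a sequence of such mutations and using $c_{t_k}(wSw^{-1})=(ws_k)S(ws_k)^{-1}$, the reachable generating sets are exactly the $wSw^{-1}$ for which $w$ is realizable as a product $s_{k_1}\cdots s_{k_r}$ of simple reflections arising from successive source/sink mutations. This already settles the ($\Leftarrow$) direction: every such set consists of reflections of $W(Q)$ (by Theorem~\ref{Mash's result} it generates $W(Q)$), whence $\alpha(S)$ consists of reflections and $\alpha$ is inner by diagram.

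For the ($\Rightarrow$) direction I would show that \emph{every} conjugate $wSw^{-1}$, $w\in W(Q)$, is reachable. The natural picture is geometric: $wSw^{-1}$ is the simple system attached to the chamber $wC$, a source/sink mutation at $k$ replaces the current chamber by the adjacent chamber across the wall fixed by $t_k$, and the above formula shows this advances the conjugator $w\mapsto ws_k$. Since the chamber graph of a finite Weyl group is connected, any chamber $wC$ is reached from $C$ by a finite sequence of wall-crossings; the remaining point is that the wall one wishes to cross can always be made available, i.e.\ its vertex can be turned into a source or a sink, by first performing other source/sink mutations. I expect this \textbf{unblocking and connectivity argument to be the main obstacle}: one must show that the dynamics of source/sink mutations on orientations of $\Gamma$ never gets stuck and can be steered so as to cross any prescribed wall, so that the entire chamber graph — and hence every conjugate $wSw^{-1}=\alpha(S)$ — is realized by $\Gamma$-preserving mutations.

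Finally, the ``in particular'' clause falls out at once. Given any reflection $r=wsw^{-1}$ with $s\in S$, the set $wSw^{-1}$ is a $W(Q)$-conjugate of the simple system, hence reachable by the ($\Rightarrow$) argument, and $r$ is the member of $wSw^{-1}$ sitting at the vertex carrying $s$. Therefore every reflection of $W(Q)$ appears in some generating set obtained from $Q$ by mutations, as claimed.
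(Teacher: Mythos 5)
Your reduction---that the theorem amounts to identifying the generating sets reachable by $\Gamma$-preserving mutations with the conjugates $wSw^{-1}$ of the simple system---is sound, and your ``if'' direction coincides with the paper's (variables produced by mutation are reflections; automorphisms preserving reflections are inner by diagram, by Franzsen's Proposition~1.44). The genuine gap is in the ``only if'' direction, exactly at the point you flag yourself: you never prove the ``unblocking and connectivity'' claim that the source/sink dynamics can be steered so as to cross any prescribed wall, i.e.\ that every $w\in W(Q)$ occurs as the accumulated conjugator of a sequence of single mutations each of which preserves $\Gamma$. Under your reading of the statement this claim \emph{is} the necessity direction, so the proposal is incomplete as written. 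There is also a smaller slip in your local computation: with the paper's convention (\ref{formular of mutations}), a generator $t_i$ is replaced by $t_kt_it_k$ only when there is an arrow $i\to k$ pointing \emph{into} the mutated vertex, so mutation at a sink conjugates the whole generating set by $t_k$, while mutation at a source changes no generator at all. Thus only one of your two admissible moves advances the conjugator, which makes the reachability question you left open even more delicate.

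The paper sidesteps all of this by reading ``sequence of mutations preserving the underlying diagram $\Gamma$'' more loosely---only the designated stages of the sequence must have underlying diagram $\Gamma$, not every intermediate quiver---and by exploiting that mutation is an involution. Mutate \emph{twice in succession} at an arbitrary vertex $k$ (not necessarily a source or sink): the quiver returns exactly to $Q$, even though the intermediate quiver $\mu_k(Q)$ may contain a cycle; meanwhile each neighbour of $k$ gets conjugated by $t_k$ on exactly one of the two passes (arrows into $k$ on the first pass, arrows out of $k$ on the second, since orientations at $k$ are reversed in between), and the non-neighbours commute with $t_k$, so the whole generating set becomes $t_kTt_k$. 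Iterating this along a reduced word $g=s_{i_1}s_{i_2}\cdots s_{i_r}$, i.e.\ double-mutating at $i_1$, then $i_2$, \dots, then $i_r$, produces $gSg^{-1}$ attached to the vertices of $Q$ itself, with no connectivity or steering argument whatsoever. So either adopt the paper's interpretation, in which case the two-line double-mutation computation finishes your argument, or keep your stricter interpretation, in which case you must actually supply the reachability proof you postponed---it is the main missing ingredient, not a routine verification.
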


\begin{proof}
By observation, every variable obtained by mutations must be some reflection of the corresponding Weyl group. The sufficiency follows from the fact that all automorphisms of Weyl groups that preserve reflections are inner by diagram automorphisms.

To prove necessity, assume without loss of generality that the vertex set of $Q$ is $\{1,2,\ldots,n\}$ and $\alpha$ is an inner by diagram automorphism of $W(Q)$. Note that diagram automorphisms of any Dynkin diagram keep the underlying Dynkin diagram. Then relabelling the vertices of $Q$ if necessary, there exists an inner automorphism $\overline{\alpha}$ of $W(Q)$ such that $\overline{\alpha}(S)=\alpha(S)$. It is sufficient to prove that $\overline{\alpha}(S)$ can be obtained from $Q$ by mutations and the sequence of mutations preserves the underlying diagram $\Gamma$.

Let $g=s_{i_1} s_{i_2} \cdots s_{i_r} \in W(Q)$ be a reduced expression for $g$, where $s_{i_k} \in S, 1 \leq k \leq r$. We assume that $\overline{\alpha}(S)=gSg^{-1}=\{gsg^{-1}\mid s\in S\}$. In the following we use induction to construct a sequence of mutations preserving the underlying diagram $\Gamma$ such that $gSg^{-1}$ is obtained from $Q$ by mutations.

{\bf Step 1.} We mutate firstly $Q$ at the vertex $i_1$ twice. Then we get a quiver $Q_1$, which has the same underlying diagram with $Q$. Moreover, the set $S$ becomes
\[
s_{i_1} S s_{i_1} =\{s_{i_1}s_1s_{i_1}, s_{i_1}s_2s_{i_1}, \ldots, s_{i_1}s_{n-1}s_{i_1}, s_{i_1} s_n s_{i_1} \}.
\]
We keep vertices of $Q_1$ having the same label as vertices of $Q$.


{\bf Step 2.} We then mutate $Q_{t-1}$, $t=2,3,\cdots, r$ at the vertex $i_t$ twice. Note that the variable corresponding to the vertex $i_t$ of $Q_{t-1}$ is $s_{i_1}\ldots s_{i_{t-1}} s_{i_t}s_{i_{t-1}}\ldots s_{i_1}$. Then we get a quiver $Q_{t}$, which has the same underlying diagram with $Q, Q_1,\ldots, Q_{t-1}$. Moreover, the set $s_{i_1} \ldots s_{i_{t-1}} S s_{i_{t-1}}\ldots  s_{i_1}$ of generators in $Q_{t-1}$ becomes
\begin{align*}
& s_{i_1}\cdots s_{i_{t-1}} s_{i_t}s_{i_{t-1}}\cdots s_{i_1} (s_{i_1} \cdots s_{i_{t-1}} S s_{i_{t-1}}\cdots  s_{i_1} ) s_{i_1}\cdots s_{i_{t-1}} s_{i_t}s_{i_{t-1}}\cdots s_{i_1} \\
& = s_{i_1}\cdots s_{i_{t-1}} s_{i_t} S s_{i_t}s_{i_{t-1}}\cdots s_{i_1}.
\end{align*}
We keep vertices of $Q_t$ having the same label as vertices of $Q_{t-1}$.

{\bf Step 3.} We repeat Step 2 until we get the quiver $Q_r$.

By induction, it is not difficult to show that the set of generators in $Q_r$ is
\[
s_{i_1} s_{i_2} \cdots s_{i_r} S s_{i_r}\cdots s_{i_2} s_{i_1}=g S g^{-1}=\overline{\alpha}(S).
\]

Finally, every reflection in $W(Q)$ is conjugate to a simple reflection by Lemma \ref{the form of reflections}. So we assume that every reflection is of the form $gs_ig^{-1}$, where  $g=s_{i_1}s_{i_2}\cdots s_{i_k}$ is a reduced expression for $g$ in $W(Q)$. By the same arguments as before, we mutate the sequence $i_1, i_1, i_2, i_2, \ldots, i_k,i_k$ starting from $Q$, we obtain the reflection $gs_ig^{-1}$.
\end{proof}
\begin{remark}
\begin{itemize}
  \item[(1)] In types $A_n$, $B_n (n>2)$, $D_{2k+1}$, $E_6$, $E_7$, and $E_8$, all inner by diagram automorphisms of the corresponding Weyl groups are inner automorphisms. $W(\Phi)$ is strongly rigid for $\Phi=A_n (n\neq 5), D_{2k+1}, E_6, E_7$.
  \item[(2)] If $(W,S)$ is a Coxeter system of $W$, then for any inner by diagram automorphism $\alpha$ of $W$, we have that $(W,\alpha(S))$ is also a Coxeter system of $W$.
  \item[(3)] Suppose that $Q$ is a quiver that is mutation equivalent to a $\Gamma$ quiver. Let $W(Q)$ be the corresponding Weyl group with a set $S$ of generators. Then Theorem \ref{mutate inner automorphisms of weyl groups} holds for $Q$.

\end{itemize}
\end{remark}

\section{Main results on Boolean reflection monoids}\label{main results}
Let $A^\varepsilon_{n-1}$ (respectively, $B^\varepsilon_n$, $D^\varepsilon_n$) be the Dynkin diagram with $n$ (respectively, $n+1$, $n+1$) vertices, among them the first $n-1$ (respectively, $n$, $n$) vertices are mutable vertices and the last vertex $\varepsilon$ is a frozen vertex, as is shown in Table \ref{initial ABD}. The label is left on an edge only if its weight is greater or equal to 2, otherwise left unlabelled.

In this section, we assume that $\Delta$ is one of $A^\varepsilon_{n-1}$, $B^\varepsilon_n$ and $D^\varepsilon_n$. A quiver is said to be a \textit{$\Delta$ quiver} if the underlying diagram of such quiver is $\Delta$. It follows from the connectivity and finiteness of $\Delta$ that any two $\Delta$ quivers are mutation equivalent. Let $\mathcal{U}(\Delta)$ be the mutation class of any $\Delta$ quiver.

\subsection{Boolean reflection monoids}
In 2010, Everitt and Fountain introduced reflection monoids, see \cite{EF10}. The Boolean
reflection monoids are a family of reflection monoids (symmetric inverse semigroups are Boolean reflection monoids of type $A$).

Let $V$ be a Euclidean space with standard orthonormal basis $\{v_1, v_2, \ldots, v_n\}$ and let $\Phi \subseteq V$ be a root system and $W(\Phi)$ the associated Weyl group of type $\Phi$. A partial linear isomorphism of $V$ is an isomorphism between two subspaces of $V$. Any partial linear isomorphism of $V$ can be realized by restricting an isomorphism of $V$ to some subspace. Let $g,h$ be two isomorphisms of $V$ and $Y,Z$ be two subspaces of $V$. We denote by $g_Y$ the partial isomorphism whose domain is $Y$ and whose range is the restriction of $g$ to $Y$. We denote by $M(V)$ (respectively, $GL(V)$) the \textit{general linear monoid} (respectively, \textit{general linear group}) on $V$ consisting of partial linear isomorphisms (respectively, linear isomorphisms) of $V$. In $M(V)$, we have $g_Y = h_Z$ if and only if $Y=Z$ and $gh^{-1}$ is in the isotropy group $G_Y=\{g\in GL(V)\;|\;gv=v \text{ for all } v\in Y\}$. Moreover, $g_Y h_Z=(gh)_{Z \cap h^{-1}Y}$ and $(g_Y)^{-1}=(g^{-1})_{gY}$.

Let us recall the notion ``system" in $V$ for a group $G\subseteq GL(V)$  introduced in \cite{EF10}.
\begin{definition}[{\cite[Definition 2.1]{EF10}}]
Let $V$ be a real vector space and $G\subseteq GL(V)$ a group. A collection $\mathcal{S}$ of subspaces of $V$ is called a \textit{system} in $V$ for $G$ if and only if
\begin{itemize}
  \item[(1)] $V \in \mathcal{S}$,
  \item[(2)] $G\mathcal{S}=\mathcal{S}$, that is, $gX \in \mathcal{S}$ for any $g \in G$ and $X \in
  \mathcal{S}$,
  \item[(3)] if $X, Y \in \mathcal{S}$, then $X \cap Y \in \mathcal{S}$.
\end{itemize}
\end{definition}

For $J\subseteq X=\{1,2, \ldots, n\}$, let $X(J)=\bigoplus_{j \in J} \mathbb{R}v_{j} \subseteq V$, and $\mathcal{B}=\{X(J): J \subseteq X\}$ with $X(\emptyset)={\bf 0}$. Then $\mathcal{B}$ is a Boolean system in $V$ for $W(\Phi)$, where $\Phi= A_{n-1}$, $B_n/C_n$, or $D_n$. For example, the Weyl group $W(A_{n-1})$-action on the subspaces $X(J)\in \mathcal{B}$ is just $g(\pi)X(J)=X(\pi J)$, where $\pi \mapsto g(\pi)$ induces an isomorphism from the symmetric group $S_n$ on the set $X$ to $W(A_{n-1})$, $\pi J=\{\pi(j)\mid j\in J\}$. Note that $\mathcal{B}$ is not a system for any of the exceptional $W(\Phi)$.


\begin{definition}[{\cite[Definition 2.2]{EF10}}]
Let $G\subseteq GL(V)$ be a group and $\mathcal{S}$ a system in $V$ for $G$. The monoid of partial linear isomorphisms given by $G$ and $\mathcal{S}$ is the submonoid of $M(V)$ defined by
\begin{align*}
M(G,\mathcal{S}):=\{g_X : g \in G, \ X \in \mathcal{S} \}.
\end{align*}
If $G$ is a reflection group, then $M(G,\mathcal{S})$ is called a \textrm{reflection monoid}.
\end{definition}

Let $G$ be the reflection group $W(\Phi)$ for $\Phi= A_{n-1}$, $B_n/C_n$, or $D_n$, and $\mathcal{S}$ the Boolean system in $V$ for $G$, then $M(G,\mathcal{S})$ is called a \textit{Boolean reflection monoid}. In general, we write $M(\Phi,\mathcal{B})$ instead of $M(W(\Phi),\mathcal{B})$, and call $M(\Phi,\mathcal{B})$ the Boolean reflection monoid of type $\Phi$. 

We recall Everitt and Fountain's presentations in Section 4 of \cite{EF13}:

\begin{lemma}\label{EF presentations}
Everitt and Fountain's presentations of Boolean reflection monoids are as follows:
\begin{align*}
M(A_{n-1},\mathcal{B})=& \left\langle  s_1,s_2, \ldots, s_{n-1}, s_{\varepsilon} \mid
(s_is_j)^{m_{ij}}=e\ \text{for } 1 \leq i,j\leq n-1, \right. \\
& \left. \ s_{\varepsilon}^2=s_{\varepsilon}, s_i s_{\varepsilon} = s_{\varepsilon} s_i \text{ for $i\neq 1$}, \right. \\
& \left. \ s_{\varepsilon} s_{1} s_{\varepsilon} s_{1}= s_{1} s_{\varepsilon} s_{1} s_{\varepsilon} =s_{\varepsilon} s_{1} s_{\varepsilon} \right\rangle, \\
M(B_{n},\mathcal{B})=& \left\langle s_0,s_1,\ldots,s_{n-1}, s_{\varepsilon} \mid (s_is_j)^{m_{ij}}=e \text{ for } 0 \leq i,j\leq n-1, \right. \\
&\left. s_{\varepsilon}^2=s_{\varepsilon},\ s_0 s_{1} s_{\varepsilon} s_{1} = s_{1} s_{\varepsilon}
s_{1} s_{0}, \ s_0 s_{\varepsilon} = s_{\varepsilon}, \right. \\
&\left. s_is_{\varepsilon}=s_{\varepsilon}s_i\text{ for $i\neq 1$},\ s_{1}s_{\varepsilon}s_{1}s_{\varepsilon}=s_{\varepsilon}s_{1}s_{\varepsilon}s_{1}=
s_{\varepsilon}s_{1}s_{\varepsilon}\right.\rangle, \\
M(D_{n},\mathcal{B})=& \left\langle s_0,s_1,\ldots,s_{n-1},s_{\varepsilon} \mid (s_is_j)^{m_{ij}}=e\
\text{for } 0 \leq i,j\leq n-1,\right. \\
&\left. \ s_{\varepsilon}^2=s_{\varepsilon}, s_i s_{\varepsilon} = s_{\varepsilon} s_i \text{ for $i> 1$},\
s_{\varepsilon} s_{1} s_{\varepsilon} s_{1}= s_{1} s_{\varepsilon} s_{1} s_{\varepsilon} = s_{\varepsilon} s_{1} s_{\varepsilon}, \right. \\
&\left. \ s_0 s_{\varepsilon} s_{0}  = s_1 s_{\varepsilon} s_1,\ s_0 s_2 s_1 s_{\varepsilon} s_1 s_2
= s_2 s_1 s_{\varepsilon} s_1 s_2 s_0
\right\rangle,
\end{align*}
where $m_{ij}$ is defined in (\ref{definition of M}).
\end{lemma}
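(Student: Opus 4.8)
Since this lemma merely reproduces the presentations established by Everitt and Fountain, the most economical route is to match the relations listed here, term by term, with those appearing in Section~4 of \cite{EF13} and then invoke their theorem directly. If instead one wants an independent verification, I would proceed as follows. Let $P$ be the monoid defined by one of the three presentations, with generators $\{s_i\}\cup\{s_\varepsilon\}$, and let $M=M(\Phi,\mathcal{B})$ be the corresponding Boolean reflection monoid. First I would fix a concrete realization of the generators inside $M$: send each $s_i$ to the reflection in $W(\Phi)$ attached to the $i$-th node (a unit of $M$), and send $s_\varepsilon$ to the partial identity $1_{X(J_0)}$ on the distinguished coordinate hyperplane associated with the frozen vertex. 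A direct check that these images satisfy every relation --- the Coxeter relations $(s_is_j)^{m_{ij}}=e$, idempotency $s_\varepsilon^2=s_\varepsilon$, the commutations $s_is_\varepsilon=s_\varepsilon s_i$ for nodes not adjacent to $\varepsilon$, and the mixed relations such as $s_\varepsilon s_1 s_\varepsilon s_1=s_1 s_\varepsilon s_1 s_\varepsilon=s_\varepsilon s_1 s_\varepsilon$ together with the type-specific relations $s_0 s_1 s_\varepsilon s_1=s_1 s_\varepsilon s_1 s_0$, $s_0 s_\varepsilon=s_\varepsilon$ in type $B$ and $s_0 s_\varepsilon s_0=s_1 s_\varepsilon s_1$, $s_0 s_2 s_1 s_\varepsilon s_1 s_2=s_2 s_1 s_\varepsilon s_1 s_2 s_0$ in type $D$ --- produces a well-defined homomorphism $\phi\colon P\to M$. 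Since the $s_i$ generate the unit group $W(\Phi)$ and the $W(\Phi)$-orbit of $1_{X(J_0)}$ together with intersections yields every partial identity $1_{X(J)}$, the map $\phi$ is surjective.

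The substance of the argument is then the reverse inequality $|P|\le |M|$. Here I would establish a normal form for elements of $P$: using the Coxeter and mixed relations, every word can be rewritten as $w\cdot\theta$, where $\theta=1_X$ is one of the $2^n$ idempotents indexed by $X\in\mathcal{B}$ and $w$ runs over a transversal of $W(\Phi)/G_X$, the relations forcing the identification $w\theta=w'\theta$ exactly when $w^{-1}w'\in G_X$. The mixed relations are precisely what is needed to push every occurrence of $s_\varepsilon$ to the right and to show that the submonoid generated by the $W(\Phi)$-conjugates of $s_\varepsilon$ is a commutative band isomorphic to the Boolean lattice $\mathcal{B}$, with $s_\varepsilon s_1 s_\varepsilon s_1=s_\varepsilon s_1 s_\varepsilon$ encoding the meet operation. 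Counting the resulting normal forms gives $|P|\le\sum_{X\in\mathcal{B}}[W(\Phi):G_X]=|M|$, and comparing with the surjectivity of $\phi$ forces $\phi$ to be an isomorphism.

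The main obstacle is exactly this normal-form and counting step, and it is genuinely type-dependent. In type $A$ the idempotent structure is transparent, since $M(A_{n-1},\mathcal{B})$ is the symmetric inverse semigroup $\mathscr{I}_n$; but in types $B$ and $D$ one must show that the extra relations $s_0 s_1 s_\varepsilon s_1=s_1 s_\varepsilon s_1 s_0$, $s_0 s_\varepsilon=s_\varepsilon$, $s_0 s_\varepsilon s_0=s_1 s_\varepsilon s_1$, and the long type-$D$ relation are \emph{exactly} sufficient to reproduce the isotropy identifications $w\theta=w'\theta$ with no further collapse. Proving that no additional relations are forced --- that is, that $P$ is no smaller than $M$ --- is the delicate direction, and it is this bookkeeping that is carried out in detail in \cite{EF13}.
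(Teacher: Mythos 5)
Your proposal is correct and takes essentially the same approach as the paper: the paper states this lemma without any proof, purely as a recollection of the presentations in \cite{EF13}, which is exactly your ``economical route'' of matching the listed relations with Everitt and Fountain's and invoking their theorem. Your supplementary verification sketch (a surjection from the presented monoid onto $M(\Phi,\mathcal{B})$ followed by a normal-form counting bound $|P|\le\sum_{X\in\mathcal{B}}[W(\Phi):G_X]=|M(\Phi,\mathcal{B})|$) is the standard argument and is the one carried out in \cite{EF13} itself, not in this paper.
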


In \cite{EF10}, Everitt and Fountain proved that the Boolean reflection monoid $M(A_{n-1},\mathcal{B})$ (respectively, $M(B_{n},\mathcal{B})$, $M(D_{n},\mathcal{B})$) is isomorphic to the symmetric inverse semigroup $\mathscr{I}_{n}$ (respectively, the monoid $\mathscr{I}_{\pm n}$ of partial signed permutations, the monoid $\mathscr{I}^{e}_{\pm n}$ of partial even signed permutations). 
 
\subsection{Mutation classes of $\Delta$ quivers}\label{mutations classes with frozen vertices}
By a result of Fomin and Zelevinsky \cite{FZ03}, the mutation class of a Dynkin quiver is finite. Let $\mathcal{U}(A_{k+1})$ be the mutation class of Dynkin quivers of type $A_{k+1}$. We use a description of $\mathcal{U}(A_{k+1})$ given in \cite{BV08}:
\begin{itemize}
  \item[(1)] Each quiver has $k+1$ vertices.
  \item[(2)] All nontrivial cycles are oriented and of length 3.
  \item[(3)] A vertex has at most four incident arrows.
  \item[(4)] If a vertex has four incident arrows, then two of them belong to one oriented 3-cycle and the other belong to another oriented 3-cycle.
  \item[(5)] If a vertex has three incident arrows, then two of them belong to one oriented 3-cycle and the third arrow does not belong to any oriented 3-cycles.
\end{itemize}

Let $\mathscr{\phi}(A^{\varepsilon}_{k})$ be the class of quivers which satisfy the above conditions (1)--(5) and
\begin{enumerate}	
	 \item[(6)] The frozen vertex $\varepsilon$ has at most two incident arrows and if it has two incident arrows, then $\varepsilon$ belongs to one oriented 3-cycle.
 \end{enumerate} 

Let $\mathcal{U}^A$ (respectively, $\mathcal{U}^{A^{\varepsilon}}$) be the union of all $\mathcal{U}(A_k)$  (respectively, $\mathscr{\phi}(A^{\varepsilon}_k)$) for all $k$. For a quiver $Q \in \mathcal{U}^A$ or $Q \in \mathcal{U}^{A^{\varepsilon}}$, a vertex $v$ in $Q$ is called a \textit{connecting vertex} if $v$ has at most 2 neighbours and, moreover, if $v$ has 2 neighbours, then $v$ is a vertex in a 3-cycle in $Q$.

Let $\mathscr{\phi}(B^{\varepsilon}_n)$ be the class of quivers which belong to one of the following two types (here $a$ is a connecting vertex for $Q'\in\mathcal{U}^{A^{\varepsilon}}$ and $b$ is a connecting vertex for $Q''\in\mathcal{U}^{A}$):
\begin{figure}[H]
\resizebox{0.8\width}{0.8\height}{
\begin{minipage}[b]{0.5\textwidth}
\centerline{
\begin{tikzpicture}
\draw (0,0) circle (2pt);
\draw (1.5,0) circle (2pt);
\draw[fill] (3,0) circle (2pt);
\draw (0.075,0)--(1.45,0);
\draw[dashed] (1.575,0)--(2.925,0);
\draw (2.5,0) ellipse (1.5 cm and 0.7 cm);
\node [left] at (0,0) {$0$};
\node [below] at (1.5,0) {$a$};
\node[right] at (3,0) {$\varepsilon$};
\node [below] at (2.5,0) {$Q'$};
\node [below] at (0.75,0) {$2$};
\end{tikzpicture}}
\end{minipage}
\begin{minipage}[b]{0.5\textwidth}
\centerline{
\begin{tikzpicture}
\draw (0.75,1.2) circle (2pt);
\draw (0,0) circle (2pt);
\draw (1.5,0) circle (2pt);
\draw[fill] (3,0) circle (2pt);
\draw (0.075,0)--(1.45,0);
\draw (0.7,1.15)--(0.05,0.05);
\draw (0.8,1.15)--(1.45,0.05);
\draw[dashed] (1.575,0)--(2.925,0);
\draw (2.5,0) ellipse (1.5 cm and 0.7 cm);
\draw (-1,0) ellipse (1.5 cm and 0.7 cm);
\node [above] at (0.75,1.2) {$0$};
\node [left] at (0,0) {$b$};
\node [below] at (1.5,0) {$a$};
\node[right] at (3,0) {$\varepsilon$};
\node [below] at (2.5,0) {$Q'$};
\node  at (-1,0) {$Q''$};
\node [right] at (1.05,0.7) {$2$};
\node [left] at (0.4,0.7) {$2$};
\end{tikzpicture}}
\end{minipage}}
\caption{$\mathscr{\phi}(B^{\varepsilon}_n)$}\label{mutation class of type B}
\end{figure}

As a generalization of mutation class of Dynkin quivers of type $D_n$ in \cite{V10}, let $\mathscr{\phi}(D^{\varepsilon}_n)$ be the class of quivers which belong to one of the following four types:

Type I: $Q$ has two vertices $a$ and $b$ which have a neighbour and both $a$ and
$b$ have an arrow to or from the same vertex $c$, and $Q'=Q\backslash \{a,b\}$ is in $\mathscr{\phi}(A^{\varepsilon}_{n-2})$, and $c$ is a connecting vertex for $Q'$, see Figure \ref{Type I}.
\begin{figure}[H]
\resizebox{0.75\width}{0.75\height}{
\begin{minipage}[b]{0.5\textwidth}
\centerline{
\begin{tikzpicture}
\draw (1.5,1.5) circle (2pt);
\draw (0,0) circle (2pt);
\draw (1.5,0) circle (2pt);
\draw[fill] (3,0) circle (2pt);
\draw (0.075,0)--(1.45,0);
\draw (1.5,0.05)--(1.5,1.45);
\draw[dashed] (1.575,0)--(2.925,0);
\draw (2.5,0) ellipse (1.6 cm and 0.9 cm);
\node [above] at (1.5,1.5) {$a$};
\node [left] at (0,0) {$b$};
\node [below] at (1.5,0) {$c$};
\node[right] at (3,0) {$\varepsilon$};
\node [below] at (2.5,-0.1) {$Q'$};
\end{tikzpicture}}
\end{minipage}}
\caption{Type I}\label{Type I}
\end{figure}
Type II: $Q$ has a full subquiver with four vertices which looks like the quiver drawn in the left hand side of Figure \ref{Type II} and $Q \backslash \{a,b,c\to d\}=Q' \cup Q''$ is disconnected with two components $Q'\in \mathscr{\phi}(A^{\varepsilon}_{k})$ for some integer $k$ and $Q''\in \mathcal{U}(A_{\ell})$ for some integer $\ell$ and for which $c$ and $d$ are connecting vertices, see the right hand side of Figure \ref{Type II}.
\begin{figure}[H]
\resizebox{0.75\width}{0.75\height}{
\begin{minipage}[b]{0.5\textwidth}
\centerline{
\begin{tikzpicture}[>=latex]
\draw (0,0) circle (2pt);
\draw (2.6,0) circle (2pt);
\draw (1.3,1.8) circle (2pt);
\draw (1.3,-1.8) circle (2pt);
\draw[<-,shorten >=1pt] (0.05,0.05)--(1.25,1.75);
\draw[->,shorten >=1pt] (2.55,0.05)--(1.35,1.75);
\draw[<-,shorten >=1pt] (0.05,-0.05)--(1.25,-1.75);
\draw[->,shorten >=1pt] (2.55,-0.05)--(1.35,-1.75);
\draw[->,shorten >=0.6pt] (0.075,0)--(2.525,0);
\node [above] at (1.3,1.8) {$a$};
\node [left] at (0,0) {$d$};
\node [below] at (1.3,-1.8) {$b$};
\node [right] at (2.6,0) {$c$};
\end{tikzpicture}}
\end{minipage}
\begin{minipage}[b]{0.5\textwidth}
\centerline{
\begin{tikzpicture}[>=latex]
\draw (0,0) circle (2pt);
\draw[fill] (4.5,0) circle (2pt);
\draw (3,0) circle (2pt);
\draw (1.5,2) circle (2pt);
\draw (1.5,-2) circle (2pt);
\draw (0.05,0.05)--(1.45,1.95);
\draw (2.95,0.05)--(1.55,1.95);
\draw (0.05,-0.05)--(1.45,-1.95);
\draw (2.95,-0.05)--(1.55,-1.95);
\draw[dashed] (3.075,0)--(4.425,0);
\draw (0.075,0)--(2.925,0);
\draw (-1,0) ellipse (1.3 cm and 0.8 cm);
\draw (4,0) ellipse (1.3 cm and 0.8 cm);
\node [above] at (1.5,2) {$a$};
\node [left] at (0,0) {$d$};
\node [right] at (4.5,0) {$\varepsilon$};
\node [below] at (3,0) {$c$};
\node [below] at (1.5,-2) {$b$};
\node at (-1.6,0) {$Q''$};
\node[below] at (4,0) {$Q'$};
\end{tikzpicture}}
\end{minipage}}
\caption{Type II}\label{Type II}
\end{figure}
Type III: $Q$ has a full subquiver which is an oriented 4-cycle drawn in the left hand side of Figure \ref{Type III} and $Q \backslash \{a,b\}=Q' \cup Q''$ is disconnected with two components $Q'\in \mathscr{\phi}(A^{\varepsilon}_{k})$ for some integer $k$ and $Q''\in \mathcal{U}(A_{\ell})$ for some integer $\ell$ and for which $c$ and $d$ are connecting vertices, see the right hand side of Figure \ref{Type III}.
\begin{figure}[H]
\resizebox{0.75\width}{0.75\height}{
\begin{minipage}[b]{0.5\textwidth}
\centerline{
\begin{tikzpicture}[>=latex]
\draw (0,0) circle (2pt);
\draw (1.5,0) circle (2pt);
\draw (3,0) circle (2pt);
\draw (1.5,2) circle (2pt);
\draw[<-,shorten >=1pt] (0.05,0.05)--(1.45,1.95);
\draw[->,shorten >=1pt] (2.95,0.05)--(1.55,1.95);
\draw[->,shorten >=1pt] (0.075,0)--(1.425,0);
\draw[->,shorten >=1pt] (1.575,0)--(2.925,0);
\node [above] at (1.5,2) {$a$};
\node [left] at (0,0) {$d$};
\node [below] at (1.5,0) {$b$};
\node [right] at (3,0) {$c$};
\end{tikzpicture}}
\end{minipage}
\begin{minipage}[b]{0.5\textwidth}
\centerline{
\begin{tikzpicture}
\draw (0,0) circle (2pt);
\draw (1.5,0) circle (2pt);
\draw[fill] (4.5,0) circle (2pt);
\draw (3,0) circle (2pt);
\draw (1.5,2.2) circle (2pt);
\draw (0.05,0.05)--(1.45,2.15);
\draw (2.95,0.05)--(1.55,2.15);
\draw (0.075,0)--(1.425,0);
\draw (1.575,0)--(2.925,0);
\draw[dashed] (3.075,0)--(4.425,0);
\draw (-1,0) ellipse (1.3 cm and 0.8 cm);
\draw (4,0) ellipse (1.3 cm and 0.8 cm);
\node [above] at (1.5,2.2) {$a$};
\node [left] at (0,0) {$d$};
\node [below] at (3,0) {$c$};
\node [right] at (4.5,0) {$\varepsilon$};
\node [below] at (1.5,0) {$b$};
\node at (-1.6,0) {$Q''$};
\node[below] at (4,0) {$Q'$};
\end{tikzpicture}}
\end{minipage}}
\caption{Type III}\label{Type III}
\end{figure}
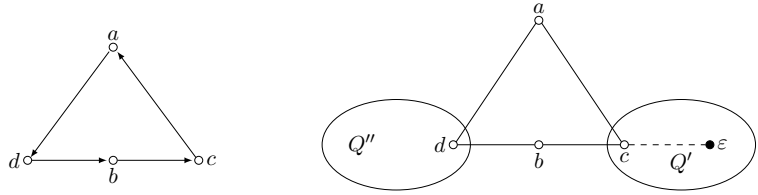
Type IV: $Q$ has a full subquiver which is an oriented $d$-cycle, where $d \geq 3$. We will call this the \textit{central cycle}. For each arrow $\alpha:a\to b$ in the central cycle, there may (and may not) be a vertex $c_{\alpha}$ which is not on the central cycle, such that there is an oriented 3-cycle traversing $a\to b \to c_{\alpha} \to a$. Moreover, this 3-cycle is a full subquiver. Such a 3-cycle will be called a \textit{spike}. There are no more arrows starting or ending in vertices on the central cycle. 

Now $Q\backslash \{ \text{vertices in the central cycle and their incident arrows}\}=Q' \cup Q'' \cup Q'''\cup \ldots$ is a disconnected union of quivers, one for each spike, which $Q'\in \mathscr{\phi}(A^{\varepsilon}_{k})$ for some integer $k$ and the others are all in $\mathcal{U}^A$ and for which the corresponding vertex $c$ is a connecting vertex, see Figure \ref{Type IV}.
\begin{figure}[H]
\resizebox{0.8\width}{0.8\height}{
\centerline{
\begin{tikzpicture}
\draw (0,0) circle (2pt);
\draw (2,0) circle (2pt);
\draw (3.5,1.4) circle (2pt);
\draw[fill] (-3,5) circle (2pt);
\draw (-1.5,1.4) circle (2pt);
\draw (3.5,3.4) circle (2pt);
\draw (-1.5,3.4) circle (2pt);
\draw (2,5) circle (2pt);
\draw (0,5) circle (2pt);
\draw (0.075,0)--(1.925,0);
\draw (2.05,0.05)--(3.45,1.35);
\draw (-0.05,0.05)--(-1.45,1.35);
\draw (3.5,1.475)--(3.5,3.325);
\draw (-1.5,1.475)--(-1.5,3.325);
\draw (-1.45,3.45)--(-0.05,4.95);
\draw[dashed] (3.45,3.45)--(2.05,4.95);
\draw[dashed] (-2.925,5)--(-1.575,5);
\draw (-1.5,5) circle (2pt);
\draw (1,6) circle (2pt);
\draw (4.5,2.4) circle (2pt);
\draw (0.075,5)--(1.925,5);
\draw (-1.5,3.475)--(-1.5,4.925);
\draw (-1.425,5)--(-0.075,5);
\draw (0.05,5.05)--(0.95,5.95);
\draw (1.95,5.05)--(1.05,5.95);
\draw (3.55,1.45)--(4.45,2.35);
\draw (3.55,3.35)--(4.45,2.45);
\draw (5.5,2.4) ellipse (1.3 cm and 0.8 cm);
\draw (1,7) ellipse (0.8 cm and 1.3 cm);
\draw (-2.5,5.2) ellipse (1.3 cm and 0.8 cm);
\node [above] at (1,7) {$Q''$};
\node [right] at (5.5,2.4) {$Q'''$};
\node [above] at (-2.5,5) {$Q'$};
\node [above] at (-1.5,5) {$c'$};
\node [right] at (-1.5,3.3) {$i_1$};
\node [right] at (-1.5,1.5) {$i_2$};
\node [above] at (0,0) {$i_3$};
\node [above] at (2,0) {$i_4$};
\node [left] at (3.5,1.5) {$i_5$};
\node [left] at (3.5,3.3) {$i_6$};
\node [below] at (2,5) {$i_{d-1}$};
\node [below] at (0,5) {$i_d$};
\node [left] at (-3,5) {$\varepsilon$};
\node [above] at (1,6) {$c''$};
\node [right] at (4.5,2.4) {$c'''$};
\end{tikzpicture}}}
\caption{Type IV}\label{Type IV}
\end{figure}

Let $\mathcal{U}(\Delta)$ be the mutation class of any $\Delta$ quiver.
\begin{lemma}\label{mutation class with frozen vertex}
The set $\mathcal{U}(\Delta)$ consists of these diagrams described as before.
\end{lemma}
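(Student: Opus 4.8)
The plan is to establish the set equality $\mathcal{U}(\Delta)=\mathscr{\phi}(\Delta)$ (for the matching type $A^{\varepsilon}$, $B^{\varepsilon}$, or $D^{\varepsilon}$) by a double inclusion, building on the known descriptions of the mutation classes $\mathcal{U}(A_{n})$ from \cite{BV08} and $\mathcal{U}(D_{n})$ from \cite{V10}, which already account for conditions (1)--(5) and the Type I--IV shapes once the frozen vertex is forgotten. The organizing principle throughout is that $\varepsilon$ is never mutated, so the arrows incident to $\varepsilon$ can be altered only by mutating a neighbour of $\varepsilon$.

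For the inclusion $\mathcal{U}(\Delta)\subseteq\mathscr{\phi}(\Delta)$, I would first check that the standard $\Delta$ quiver of Table \ref{initial ABD} lies in $\mathscr{\phi}(\Delta)$, which is immediate since $\varepsilon$ is a leaf there, and then show that $\mathscr{\phi}(\Delta)$ is closed under mutation at any mutable vertex. Forgetting $\varepsilon$, closure of conditions (1)--(5) and of the Type I--IV shapes is precisely the content of \cite{BV08,V10}; the new work is to verify that the defining condition on $\varepsilon$ survives. For $A^{\varepsilon}$ this is condition (6), namely that $\varepsilon$ remains a connecting vertex. The only mutations that touch the arrows at $\varepsilon$ are those at a neighbour $a$ of $\varepsilon$, and a short case analysis using (4)--(5) shows that whenever such a mutation gives $\varepsilon$ a second neighbour it simultaneously places $\varepsilon$ in an oriented $3$-cycle, so $\varepsilon$ stays a connecting vertex. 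The analogous bookkeeping for $B^{\varepsilon}$, tracking the weight-$2$ edge at the vertex $0$ through the relation $\pm\sqrt{c}\pm\sqrt{c'}=\sqrt{ab}$, and for $D^{\varepsilon}$, checking that mutation carries each of Types I--IV into another such type with $\varepsilon$ remaining a connecting vertex of the embedded $A^{\varepsilon}$-piece, completes this direction.

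For the reverse inclusion $\mathscr{\phi}(\Delta)\subseteq\mathcal{U}(\Delta)$, I would argue by induction on the number of vertices that every $Q\in\mathscr{\phi}(\Delta)$ can be mutated back to the standard $\Delta$ quiver without ever mutating $\varepsilon$. The idea is to use mutations at mutable vertices to unfold any $3$-cycles and higher central cycles, reducing the number of incident arrows step by step until the underlying shape is the linear (or forked) Dynkin form; at each reduction one invokes the corresponding connectivity statement for $\mathcal{U}(A)$ or $\mathcal{U}(D)$ on the subquiver obtained by deleting $\varepsilon$ or the central cycle, and then checks that the deleted data can be reattached compatibly. Since the $A^{\varepsilon}$-part $Q'$ appearing inside the $B^{\varepsilon}$ and $D^{\varepsilon}$ pictures has strictly fewer vertices, the induction hypothesis applies to it directly.

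The hard part will be the closure verification for Type IV of $D^{\varepsilon}$: one must check that mutating a vertex on the central $d$-cycle, a spike vertex, or a vertex inside one of the attached $\mathcal{U}^{A}$ or $A^{\varepsilon}$ pieces always lands back in one of the four prescribed types, and in particular that the frozen vertex keeps its connecting-vertex role throughout the shortening or lengthening of the central cycle. This is a finite but delicate case analysis; the remaining cases for $A^{\varepsilon}$ and $B^{\varepsilon}$, together with the inductive connectivity argument, are comparatively routine once the local behaviour at $\varepsilon$ is understood.
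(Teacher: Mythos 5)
Your proposal is correct and follows essentially the same strategy as the paper: a double inclusion in which one direction comes from showing the described classes $\mathscr{\phi}(A^{\varepsilon}_{k})$, $\mathscr{\phi}(B^{\varepsilon}_n)$, $\mathscr{\phi}(D^{\varepsilon}_n)$ are closed under mutation (the paper adapts the argument of Theorem 3.1 of \cite{V10}, exactly the case analysis you flag as the hard part for Type IV), and the other from a connectivity argument showing every quiver in these classes is mutation equivalent to the standard $\Delta$ quiver (the paper invokes Lemma 3.2 of \cite{V10} where you propose an induction on the number of vertices). Your explicit observation that the frozen vertex is never mutated, so only mutations at its neighbours can change its incident arrows, is the same organizing principle underlying the paper's appeal to the known classifications of \cite{BV08,V10}.
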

\begin{proof}
The mutation class $\mathcal{U}(\Delta)$ is contained in the mutation class of the corresponding Dynkin quiver obtained by viewing the frozen vertex as a mutable vertex.

The mutation classes of Dynkin quivers of types $A_n$, $B_n$, and $D_n$ were given in \cite{BM15, FZ03, BV08, V10}. In particualr, Dynkin quivers of type $D_n$ are also obtained by using Schiffler's geometric model of cluster categories of type $D_n$ in \cite{Schi08}.

By Lemma 3.2 of \cite{V10}, we deduce that these quivers in $\mathscr{\phi}(A^{\varepsilon}_{k})$ (respectively, $\mathscr{\phi}(B^{\varepsilon}_n)$, $\mathscr{\phi}(D^{\varepsilon}_n)$) belong to the corresponding mutation class $\mathcal{U}(\Delta)$ of $\Delta$ quivers. It is not difficult to show that the set $\mathscr{\phi}(A^{\varepsilon}_{k})$ (respectively, $\mathscr{\phi}(B^{\varepsilon}_n)$) keeps invariant under the mutations. By using the similar argument as Theorem 3.1 of \cite{V10}, we show that the $\mathscr{\phi}(D^{\varepsilon}_n)$ also keeps invariant under the mutations.     
\end{proof}

\subsection{Inverse monoids determined by quivers}\label{monoids and diagrams}

Let $I \cup \{\varepsilon\}$ be the set of vertices of a quiver $Q$ with a frozen vertex $\varepsilon$. For any $i,j \in I$ and $\varepsilon$, define
\begin{align*}
& m_{ij} = \begin{cases}
2 & \text{ if $i$ and $j$ are not connected}, \\
3 & \text{ if $i$ and $j$ are connected by an edge of weight $1$}, \\
4 & \text{ if $i$ and $j$ are connected by an edge of weight $2$}, \\
6 & \text{ if $i$ and $j$ are connected by an edge of weight $3$}.
\end{cases} \\
& m_{\varepsilon j} = \begin{cases}
2 & \text{ if $\varepsilon$ and $j$ are not connected}, \\
3 & \text{ if $\varepsilon$ and $j$ are connected by an edge of weight $1$}, \\
1 & \text{ if $\varepsilon$ and $j$ are connected by an edge of weight $2$}.
\end{cases}  \\
& m_{j \varepsilon} = \begin{cases}
2 & \text{ if $\varepsilon$ and $j$ are not connected}, \\
4 & \text{ if $\varepsilon$ and $j$ are connected by an edge of weight $1$}, \\
2 & \text{ if $\varepsilon$ and $j$ are connected by an edge of weight $2$}.
\end{cases}
\end{align*}
Setting, in addition, $m_{ii}=1$ for any $i \in I\cup\{\varepsilon\}$. Then $(m_{ij})_{i,j\in I}$ is a Coxeter matrix. 

In order to define an inverse monoid $M(Q)$ associated to $Q$, we introduce some notations. Let $(i_1,i_2,\ldots,\varepsilon)$ denote the shortest path from $i_1$ to $\varepsilon$ in $Q$. We denote by $e$ and $P(s_{i_1},s_\varepsilon)$ the identity element and the element $s_{i_1}\ldots s_{\varepsilon}$ in $M(Q)$ respectively. Denote by $(aba\ldots)_m$ an alternating product of $m$ terms.

\begin{definition}\label{definition relation}
For any quiver $Q\in \mathcal{U}(\Delta)$, we define an inverse monoid $M(Q)$ 
with generators $s_i$, $i \in I\cup\{\varepsilon\}$ and relations:
\begin{itemize}
\item[(R1)] $s_{i}^2 = e$ for $i \in I$, $s_\varepsilon^2=s_\varepsilon$;
\item[(R2)] $(s_{i}s_{j})^{m_{ij}} = e$ for $i, j \in I$ and $(s_{\varepsilon}s_{j}s_{\varepsilon} \cdots)_{m_{\varepsilon j}} = (s_{j}s_{\varepsilon}s_{j} \cdots)_{m_{j\varepsilon}} = (s_{\varepsilon}s_{j}s_{\varepsilon} \cdots)_{m_{\varepsilon j}+1}$ for any $j \in I$;
\item[(R3)]
\begin{itemize}
\item[(i)] For every chordless oriented cycle $C$ in $Q$:
\begin{align*}
& i_0 \overset{w_1}{\longrightarrow}  i_1 \overset{w_2}{\longrightarrow} \cdots \to i_{d-1} \overset{w_0}{\longrightarrow} i_0,
\end{align*}
where $i_j \in I$ for $j=0,1,\ldots,d-1$, either all of the weights are 1, or $w_0=2$, we have:
\begin{align*}
& (s_{i_0} s_{i_1}\cdots s_{i_{d-2}}s_{i_{d-1}}s_{i_{d-2}}\cdots s_{i_1})^2 = e.
\end{align*}
\item[(ii)] For every chordless oriented cycle $C$ in $Q$:
\begin{align*}
& \varepsilon \longrightarrow i_1 \longrightarrow \cdots \to i_{d-1} \longrightarrow \varepsilon,
\end{align*}
where $i_j \in I$ for $j=1,\ldots,d-1$, we have:
\begin{align*}
& s_\varepsilon s_{i_1}\cdots s_{i_{d-2}}s_{i_{d-1}}s_{i_{d-2}}\cdots s_{i_1} = s_{i_1}\cdots s_{i_{d-2}}s_{i_{d-1}}s_{i_{d-2}}\cdots s_{i_1}s_\varepsilon.
\end{align*}
\item[(iii)] For every chordless oriented cycle $C$ in $Q$:
\begin{align*}
& \varepsilon \overset{w_1}{\longrightarrow}  i_1 \overset{2}{\longrightarrow} i_2 \overset{w_2}{\longrightarrow} \varepsilon,
\end{align*}
where $i_1,i_2 \in I$, if $w_1=1$ and $w_2=2$, we have $s_\varepsilon s_{i_1} s_{i_2} s_{i_1} = s_{i_1} s_{i_2} s_{i_1}s_\varepsilon$; if $w_1=2$ and $w_2=1$, we have $s_{i_1} s_{i_2} s_\varepsilon s_{i_2} =  s_{i_2} s_\varepsilon s_{i_2} s_{i_1}$.
\end{itemize}
\item[(R4)] 
\begin{itemize}
	\item[(i)] Without loss of generality, let $(0,1,\ldots,d,\varepsilon)$ be the shortest path from $0$ to $\varepsilon$ in $Q\in \mathcal{U}(B^\varepsilon_n)$, we have
	\[
	P(s_0, s_\varepsilon) = P(s_1, s_\varepsilon), \ P(s_\varepsilon,s_0) = P(s_\varepsilon,s_1).
	\]
  \item[(ii)] For any $Q\in \mathcal{U}(D^\varepsilon_n)$ drawn in Figures \ref{Type I}--\ref{Type III}, we have 
	\[
	P(s_a, s_\varepsilon)P(s_\varepsilon,s_a)= P(s_b, s_\varepsilon)P(s_\varepsilon,s_b).
	\]
	For any $Q\in \mathcal{U}(D^\varepsilon_n)$ drawn in Figure \ref{Type IV}, we have 
	\[
	s_{i_1} P(s_{c'}, s_\varepsilon)P(s_\varepsilon,s_{c'})s_{i_1}= s_{i_2}\cdots s_{i_d} P(s_{c'}, s_\varepsilon)P(s_\varepsilon,s_{c'})s_{i_d}\cdots s_{i_2}.
	\]
\end{itemize}
\end{itemize}
\end{definition}

\begin{remark}
\begin{itemize}
  \item[(1)] In (R2), if $m_{j\varepsilon}=2$ then $m_{\varepsilon j}=2$. In this case the equation $(s_{\varepsilon}s_{j}s_{\varepsilon} \ldots)_{m_{\varepsilon j}} = (s_{j}s_{\varepsilon}s_{j} \ldots)_{m_{j\varepsilon}} = (s_{\varepsilon}s_{j}s_{\varepsilon} \ldots)_{m_{\varepsilon j}+1}$ is simplified as $s_\varepsilon s_j=s_j s_\varepsilon$.
  \item[(2)] For relation (R3)~(ii), though in this paper we only use the case $d=3,4$, the defined relation for arbitrary $d$ is still meaningful, we will study it in future work.
\end{itemize}
\end{remark}

Now we are ready for our main results in this section.
\begin{theorem}\label{mutation-invariance}
Let $\Delta \in \{A^{\varepsilon}_{n-1}, B^{\varepsilon}_{n}, D^{\varepsilon}_{n}\}$ and $Q_0$ be a $\Delta$ quiver. If $Q \sim_{\text{mut}} Q_0$ then $M(Q) \cong M(Q_0)$.
\end{theorem}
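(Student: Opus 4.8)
The plan is to reduce the statement to invariance under a single mutation and then to imitate Barot and Marsh's argument for the group case (Theorem \ref{Mash's result}), the new work being the relations that involve the frozen vertex $\varepsilon$. Since $Q \sim_{\text{mut}} Q_0$ means that $Q$ is obtained from $Q_0$ by a finite sequence of mutations at mutable vertices, and since each mutation $\mu_k$ is an involution, it suffices to produce, for every mutable vertex $k$ and every $Q \in \mathcal{U}(\Delta)$, an isomorphism $M(Q) \cong M(\mu_k(Q))$. Write $\{s_i\}_{i \in I \cup \{\varepsilon\}}$ for the generators of $M(Q)$ and $\{t_i\}_{i \in I \cup \{\varepsilon\}}$ for those of $M(\mu_k(Q))$.

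First I would define a candidate homomorphism $\phi \colon M(\mu_k(Q)) \to M(Q)$ on generators by the Barot--Marsh conjugation recipe: $t_k \mapsto s_k$, and $t_j \mapsto s_k s_j s_k$ for each vertex $j$ (mutable \emph{or} equal to $\varepsilon$) joined to $k$ by an arrow of the orientation selected by the mutation rule, while $t_i \mapsto s_i$ for all remaining vertices. Note that because $s_\varepsilon$ is only idempotent, the possible image $s_k s_\varepsilon s_k$ is still idempotent, since $(s_k s_\varepsilon s_k)^2 = s_k s_\varepsilon^2 s_k = s_k s_\varepsilon s_k$ by (R1); thus the idempotent relation is automatically respected. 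Because $\mu_k$ is an involution and the recipe is symmetric in $Q$ and $\mu_k(Q)$, the same formula yields a homomorphism $\psi \colon M(Q) \to M(\mu_k(Q))$, and verifying $\psi\phi = \operatorname{id}$ and $\phi\psi = \operatorname{id}$ on generators is a direct computation with (R1). Hence the heart of the proof is to check that $\phi$ is well defined, i.e.\ that it carries every defining relation of $M(\mu_k(Q))$ to a relation that holds in $M(Q)$.

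To carry out this verification I would split the relations of $M(\mu_k(Q))$ into two families. The submonoid of $M(Q)$ generated by the mutable generators $\{s_i : i \in I\}$ is the Weyl group $W(\Phi)$ attached to the Dynkin quiver obtained by deleting $\varepsilon$, and on these generators $\phi$ coincides with Barot and Marsh's mutation isomorphism; consequently the relations (R1), (R2), (R3)(i) involving only $I$ are guaranteed by Theorem \ref{Mash's result}. The genuinely new relations are the mixed relations (R2) governed by $m_{\varepsilon j}, m_{j\varepsilon}$, the cycle relations (R3)(ii) and (R3)(iii) through $\varepsilon$, and the path relations (R4). Here I would invoke the explicit description of the mutation classes $\mathcal{U}(\Delta)$ in Lemma \ref{mutation class with frozen vertex}, which confines the neighbourhood of $k$ and of $\varepsilon$ to the finitely many local shapes of Figures \ref{Type I}--\ref{Type IV}; for each such shape one rewrites the relation of $\mu_k(Q)$, using the weight rule of Figure \ref{mutation diagram} and the definitions of $m_{\varepsilon j}, m_{j\varepsilon}$, into a consequence of (R1)--(R4) in $M(Q)$.

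The main obstacle is precisely this case analysis at the frozen vertex. Unlike the group situation, $s_\varepsilon$ cannot be cancelled or conjugated away freely, so one must track carefully how the shortest path $(i_1,\ldots,\varepsilon)$ defining $P(s_{i_1},s_\varepsilon)$ and how the oriented cycles through $\varepsilon$ are altered by $\mu_k$. The delicate configurations are those of types $B^\varepsilon_n$ and $D^\varepsilon_n$ in which mutating at a vertex adjacent to $\varepsilon$ creates or destroys a $3$- or $4$-cycle through $\varepsilon$: there the path relation (R4) of $\mu_k(Q)$ must be shown equivalent, modulo (R3)(ii)--(iii) and the mixed braid relations, to that of $Q$, and conversely a newly created cycle relation (R3)(ii)--(iii) must follow from the path relation (R4) of $Q$. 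Once each of the finitely many neighbourhood types in Lemma \ref{mutation class with frozen vertex} has been matched in this way, $\phi$ and $\psi$ are well-defined mutually inverse homomorphisms, giving $M(Q) \cong M(\mu_k(Q))$ and, by induction on the length of a mutation sequence, $M(Q) \cong M(Q_0)$.
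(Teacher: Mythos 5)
Your strategy is the same as the paper's: reduce to a single mutation, define $\phi$ on generators by the Barot--Marsh conjugation recipe, dispose of the relations supported on mutable vertices by citing Barot--Marsh, and check the relations involving $\varepsilon$ by a case analysis over the local configurations in $\mathcal{U}(\Delta)$. This is precisely the paper's Proposition \ref{inverse monoids homomorphism} and Section \ref{the proof of main theorem}. (One caveat on the mutable part: you should not say that the submonoid generated by $\{s_i : i\in I\}$ \emph{is} the Weyl group --- that identification is not available before the theorem is proved; what you actually need, and what is true, is only that every identity derivable from the Barot--Marsh relations holds in $M(Q)$, because those relations occur among (R1), (R2), (R3)(i).)

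The genuine gap is in your inverse homomorphism. Your $\phi$ conjugates exactly the generators $r_i$ with an arrow $i\to k$ in $Q$. If ``the same formula'' for $\psi\colon M(Q)\to M(\mu_k(Q))$ means the same orientation rule read in $\mu_k(Q)$ (conjugate $s_j$ when $j\to k$ in $\mu_k(Q)$), then $\psi$ conjugates the \emph{complementary} set of neighbours of $k$ (those with $k\to j$ in $Q$), and a direct computation gives $\psi\phi(r_i)=r_k r_i r_k$ for every neighbour $i$ of $k$: the composite is conjugation by $r_k$, not the identity, contrary to your claim. (This is repairable, since $r_k^2=e$ makes $\psi\phi$ and $\phi\psi$ inner automorphisms, whence $\phi$ is still bijective --- but that argument has to be made.) If instead you define $\psi$ with the reversed rule (conjugate along arrows $k\to j$ in $\mu_k(Q)$), so that the compositions genuinely are identities as in the paper, then the well-definedness of $\psi$ no longer follows ``by symmetry of mutation'': you need the fact that the presentation (R1)--(R4) is unchanged when all arrows of the quiver are reversed. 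That is exactly the paper's Proposition \ref{without orient}, a separate, non-trivial case-by-case verification (the cycles $C_3'$, $C_3''$, $C_4'$ of Figure \ref{lemma 5.2} and $C_d'$ of Figure \ref{Type IV}) which your proposal never formulates or proves. Beyond this, be aware that what you wrote is a plan rather than a proof: the mixed (R2) relations (paper's Lemma \ref{proof of (R2)}), the cycle relations (R3)(ii)--(iii) (cases (a$'$)--(l$'$) of Lemma \ref{diagrams with frozen}), and above all the path relations (R4) --- which need Lemma \ref{lemma of mutations sequence}, showing that $P(s_c,s_\varepsilon)P(s_\varepsilon,s_c)$ changes at most by conjugation by $s_k$ under mutation --- constitute essentially the entire content of the paper's proof, and none of these computations appear in your proposal.
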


We will prove Theorem \ref{mutation-invariance} in Section \ref{the proof of main theorem}. Up to the above isomorphism, we denote by $M(\Delta)$ the inverse monoid associated to any quiver appearing in $\mathcal{U}(\Delta)$.

The following example is given to explain Theorem \ref{mutation-invariance}.

\begin{example}\label{example A3}
We start with a $A^{\varepsilon}_3$ quiver $Q_0$ which is shown in Figure \ref{A3}~(a). Let $Q_1 = \mu_2(Q_0)$ be the quiver obtained from $Q_0$ by a mutation at vertex 2.
\begin{figure}[H]
\begin{align*}
(a) \ \xymatrix{
\overset{1}{\circ}  \ar[r] & \overset{2}{\circ} \ar[r] & \overset{\varepsilon}{\bullet}} \qquad \overset{\mu_{2}} {\longrightarrow} \qquad
(b) \ \xymatrix{
\overset{1}{\circ} \ar@/^20pt/@{->}[rr] & \overset{2}{\circ} \ar[l] & \overset{\varepsilon}{\bullet}\ar[l]}
\end{align*}
\caption{(a) A $A^{\varepsilon}_3$ quiver $Q_0$; (b) The quiver $Q_1 = \mu_2(Q)$.}\label{A3}
\end{figure}
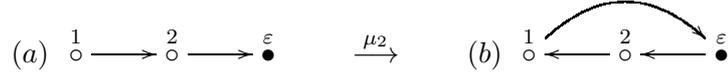
It follows from Definition \ref{definition relation} that
\begin{align*}
M(Q_0) = & \left\langle s_1, s_2, s_\varepsilon \mid s^2_1= s^2_2=e, \ s^2_\varepsilon= s_\varepsilon,\ s_1s_2s_1=s_2s_1s_2,\ s_1s_\varepsilon=s_\varepsilon s_1,\right. \\
& \left. s_2s_\varepsilon s_2 s_\varepsilon =  s_\varepsilon s_2 s_\varepsilon  s_2= s_\varepsilon s_2 s_\varepsilon \right\rangle, \\
M(Q_1) = & \left\langle t_1, t_2, t_\varepsilon \mid t^2_1= t^2_2=e, \ t^2_\varepsilon= t_\varepsilon,\ t_1t_2t_1=t_2t_1t_2,\ t_1t_\varepsilon t_1t_\varepsilon =t_\varepsilon t_1 t_\varepsilon t_1 =t_\varepsilon t_1 t_\varepsilon, \right. \\
& \left. t_2t_\varepsilon t_2 t_\varepsilon =  t_\varepsilon t_2 t_\varepsilon  t_2= t_\varepsilon t_2 t_\varepsilon,\ t_\varepsilon t_2t_1t_2 = t_2t_1t_2 t_\varepsilon \right\rangle.
\end{align*}
Then an inverse monoid isomorphism $\varphi: M(Q_0) \to M(Q_1)$ is given by
\begin{align*}
\varphi(s_i) = \begin{cases}
t_2 t_i t_2 & \text{if $i = 1$}, \\
t_i & \text{otherwise}. \\
\end{cases}
\end{align*}
\end{example}

When we say that we mutate a sequence $(n, n-1, \cdots, 2,1)$ of vertices of a quiver we mean that we first mutate the $n$-th vertex of the quiver, then we mutate the $(n-1)$-th vertex, and so on until the first vertex. The following theorem shows that Everitt and Fountain's presentations of Boolean reflection monoids can be obtained from any $\Delta$ quiver by mutations.

\begin{theorem}\label{Everitt and Fountain's presentations}
Let $\Phi=A_{n-1}, B_n$ or $D_n$. Then $M(\Phi,\mathcal{B})\cong M(\Phi^{\varepsilon})$.
\end{theorem}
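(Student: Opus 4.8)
The plan is to establish the isomorphism $M(\Phi,\mathcal{B})\cong M(\Phi^{\varepsilon})$ by exhibiting a specific quiver in the mutation class $\mathcal{U}(\Phi^{\varepsilon})$ whose associated monoid $M(Q)$, read off via Definition \ref{definition relation}, matches Everitt and Fountain's presentation in Lemma \ref{EF presentations} verbatim. Since Theorem \ref{mutation-invariance} guarantees that $M(Q)\cong M(\Phi^{\varepsilon})$ for \emph{every} $Q\in\mathcal{U}(\Phi^{\varepsilon})$, it suffices to make a single convenient choice and then compare the two presentations relation by relation. The natural candidate is the initial $\Delta$ quiver $\Phi^{\varepsilon}$ itself as drawn in Table \ref{initial ABD}, with all arrows oriented along the path toward the frozen vertex $\varepsilon$.

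First I would handle type $A$. For the standard linear orientation of $A^{\varepsilon}_{n-1}$ the only vertex adjacent to $\varepsilon$ is vertex $1$, connected by a weight-$1$ edge, so $m_{\varepsilon 1}=3$ and $m_{1\varepsilon}=4$; relation (R2) then reads $s_\varepsilon s_1 s_\varepsilon = s_1 s_\varepsilon s_1 s_\varepsilon = s_\varepsilon s_1 s_\varepsilon s_1$, which is exactly the mixed relation in the presentation of $M(A_{n-1},\mathcal{B})$. All other $s_j$ ($j\neq 1$) are non-adjacent to $\varepsilon$, giving $m_{\varepsilon j}=m_{j\varepsilon}=2$, i.e. $s_j s_\varepsilon = s_\varepsilon s_j$. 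Since $A^{\varepsilon}_{n-1}$ is a tree, there are no chordless cycles, so relations (R3) and (R4) are vacuous, and (R1) together with (R2) among the $s_i$ ($i\in I$) recovers the Weyl relations $(s_is_j)^{m_{ij}}=e$. This matches Lemma \ref{EF presentations} exactly.

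Next I would treat types $B$ and $D$ in the same spirit, but here the comparison is genuinely less mechanical. For $B^{\varepsilon}_n$ the vertex adjacent to $\varepsilon$ is again vertex $1$ with a weight-$1$ edge, reproducing the $s_1,s_\varepsilon$ relation; the remaining generators commute with $s_\varepsilon$, the idempotent relation $s_\varepsilon^2=s_\varepsilon$ comes from (R1), and the nontrivial path relations $s_0 s_1 s_\varepsilon s_1 = s_1 s_\varepsilon s_1 s_0$ and $s_0 s_\varepsilon = s_\varepsilon$ of Lemma \ref{EF presentations} must be recovered from (R4)(i) applied to the shortest path from $0$ to $\varepsilon$. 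The subtlety is that my definition's (R4)(i) gives $P(s_0,s_\varepsilon)=P(s_1,s_\varepsilon)$ and $P(s_\varepsilon,s_0)=P(s_\varepsilon,s_1)$, and I must check these are equivalent (modulo R1, R2) to Everitt--Fountain's stated relations. For $D^{\varepsilon}_n$, the same work is needed using (R4)(ii) and the fact that the two branch vertices $0,1$ both connect into the path to $\varepsilon$; the targeted relations $s_0 s_\varepsilon s_0 = s_1 s_\varepsilon s_1$ and $s_0 s_2 s_1 s_\varepsilon s_1 s_2 = s_2 s_1 s_\varepsilon s_1 s_2 s_0$ should fall out of $P(s_a,s_\varepsilon)P(s_\varepsilon,s_a)=P(s_b,s_\varepsilon)P(s_\varepsilon,s_b)$ once the shortest paths are identified.

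\emph{The main obstacle} is exactly this last comparison: my presentation is phrased abstractly in terms of the path operators $P(s_i,s_\varepsilon)$ and alternating products, whereas Everitt and Fountain write their relations in an explicit, fixed-length form. Reconciling the two requires showing that, for the chosen initial orientation, the shortest paths are short enough that $P(s_0,s_\varepsilon)$, $P(s_1,s_\varepsilon)$ collapse to the specific words appearing in Lemma \ref{EF presentations}, and that the remaining generators between vertex $1$ (or the branch vertices) and $\varepsilon$ contribute only commuting factors that cancel. I expect this to be a finite but careful rewriting argument, type by type, using (R1)--(R2) to normalize. Once each of the three presentations is shown to coincide with the corresponding Boolean reflection monoid presentation, the theorem follows immediately by Theorem \ref{mutation-invariance} and the isomorphisms $M(A_{n-1},\mathcal{B})\cong\mathscr{I}_n$ etc.\ recorded after Lemma \ref{EF presentations}.
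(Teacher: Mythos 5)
Your opening reduction (use Theorem \ref{mutation-invariance} to replace $M(\Phi^{\varepsilon})$ by $M(Q)$ for one convenient $Q\in\mathcal{U}(\Phi^{\varepsilon})$, then match that presentation against Lemma \ref{EF presentations}) is exactly the paper's first step, and your type-$A$ computation is essentially right, except that in Table \ref{initial ABD} the frozen vertex is adjacent to vertex $n-1$, not to vertex $1$, so the initial quiver gives Everitt--Fountain's presentation only up to the diagram automorphism $i\mapsto n-i$ (harmless, since only an isomorphism is claimed). The genuine gap is in types $B$ and $D$: the initial tree quiver cannot serve as your convenient quiver, and no rewriting with (R1)--(R2) will reconcile its presentation with Lemma \ref{EF presentations}. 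In the dictionary of Definition \ref{definition relation}, the relation $s_0 s_\varepsilon = s_\varepsilon$ of $M(B_n,\mathcal{B})$ is precisely the (R2)-relation attached to a weight-$2$ edge between $0$ and $\varepsilon$ (the case $m_{\varepsilon 0}=1$, $m_{0\varepsilon}=2$), and the $D$-type relations $s_\varepsilon s_j s_\varepsilon s_j = s_j s_\varepsilon s_j s_\varepsilon = s_\varepsilon s_j s_\varepsilon$ ($j=0,1$) and $s_0 s_\varepsilon s_0 = s_1 s_\varepsilon s_1$ force $\varepsilon$ to be adjacent to both $0$ and $1$; the initial quivers have none of these adjacencies. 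Worse, these relations are not derivable from the initial quiver's relations: sending $s_0,\ldots,s_{n-1}$ to the simple reflections of $W(B_n)$ and $s_\varepsilon$ to the partial identity on $\mathrm{span}(v_1,\ldots,v_{n-1})$ satisfies all relations (R1)--(R4) of the initial $B^{\varepsilon}_n$ quiver --- in particular (R4)(i) becomes $s_0 s_1\cdots s_{n-1}s_\varepsilon = s_1\cdots s_{n-1}s_\varepsilon$, whose interior factors $s_2,\ldots,s_{n-1}$ neither commute with one another nor cancel --- yet in this model $s_0 s_\varepsilon \neq s_\varepsilon$. So the identity map on generators is not an isomorphism between your presentation and Everitt--Fountain's; any isomorphism must send $s_\varepsilon$ to a nontrivial $W$-conjugate of itself, and producing that map is exactly what your plan omits.

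This is why the paper does not use the initial quivers: it first mutates them to quivers whose read-off presentations match Lemma \ref{EF presentations} verbatim. Concretely, it applies the sequence $(n-1,\ldots,2,1)$ in type $A$ (making $\varepsilon$ a source of the linear quiver, so the special relation sits at vertex $1$); the sequence $(n-1,\ldots,1,0)$ in type $B$, producing a quiver containing the oriented triangle $\varepsilon \xrightarrow{2} 0 \xrightarrow{2} 1 \to \varepsilon$, where the weight-$2$ edge $\varepsilon$--$0$ yields $s_0 s_\varepsilon = s_\varepsilon s_0 = s_\varepsilon$ by (R2) and the triangle yields $s_0 s_1 s_\varepsilon s_1 = s_1 s_\varepsilon s_1 s_0$ by (R3)(iii); and the sequence $(n-1,\ldots,2,0)$ in type $D$, producing an oriented $4$-cycle through $\varepsilon,0,2,1$, whose relations agree with Everitt--Fountain's after converting the cycle relation (R3)(ii) into their form via Lemma \ref{a chordless cycle lemma in M}~(3). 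If you replace your choice of quiver by these mutated quivers (or, alternatively, explicitly construct the conjugating isomorphism for the tree quivers, which amounts to redoing the mutation argument), your proof goes through; as written, the matching step for types $B$ and $D$ fails.
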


\begin{proof}
In view of Theorem \ref{mutation-invariance}, we only need to prove that there exists a quiver $Q\in \mathcal{U}(\Delta)$ such that $M(Q)=M(\Phi,\mathcal{B})$. 

We mutate the sequence $(n-1, n-2, \cdots, 2,1)$ of vertices of the following $A^{\varepsilon}_{n-1}$ quiver
$$\xymatrix{
\underset{1}{\circ} \ar[r] & \underset{2}{\circ} \ar[r] & \underset{3}{\circ} \ar@{-->}[r] & \underset{n-1}{\circ} \ar[r] &  \underset{\varepsilon}{\bullet}},$$
we obtain the quiver $Q_1$:
$$\xymatrix{
\underset{\varepsilon}{\bullet} \ar[r] & \underset{1}{\circ} \ar[r] & \underset{2}{\circ} \ar@{-->}[r] & \underset{n-2}{\circ} \ar[r] &  \underset{n-1}{\circ}}.$$
Then by Definition \ref{definition relation}
\begin{align*}
M(Q_1)=  & \left\langle  s_1,s_2, \ldots, s_{n-1}, s_{\varepsilon} \mid (s_is_j)^{m_{ij}}=e\ \text{for } 1 \leq i,j\leq n-1, \ s_{\varepsilon}^2=s_{\varepsilon}, \right. \\
& \left. s_i s_{\varepsilon} = s_{\varepsilon} s_i \text{ for $i\neq 1$},\ s_{\varepsilon} s_{1} s_{\varepsilon} s_{1}= s_{1} s_{\varepsilon} s_{1} s_{\varepsilon} = s_{\varepsilon} s_{1} s_{\varepsilon} \right\rangle, \\
\end{align*}
where $m_{ij} = \begin{cases}
1 & \text{if $i=j$}, \\
3 & \text{if $|i-j|=1$}, \\
2 & \text{otherwise}.
\end{cases}$
By Lemma \ref{EF presentations}, we deduce that $M(Q_1)=M(A_{n-1},\mathcal{B})$.


After the sequence $(n-1, n-2, \cdots, 1,0)$ of mutations, the $B^{\varepsilon}_n$ quiver 
\[
\xymatrix{
\underset{0}{\circ} \ar[r]^-{2} & \underset{1}{\circ} \ar[r] & \underset{2}{\circ} \ar@{-->}[r] & \underset{n-1}{\circ} \ar[r] &  \underset{\varepsilon}{\bullet}}
\]
becomes the following quiver (denoted by $Q_2$)
$$\xymatrix{ & \overset{\varepsilon}{\bullet} \ar[dl]_{2} &  & & &  \\
\underset{0}{\circ} \ar@{->}[rr]_{2} &  &  \underset{1} \circ \ar[ul] \ar@{->}[r] &  \underset{2}\circ \ar@{-->}[r]  & \underset{n-2}{\circ} \ar@{->}[r] & \underset{n-1} \circ}.$$
Then by Definition \ref{definition relation}
\begin{align*}
M(Q_2) =& \left\langle s_0,s_1,\ldots,s_{n-1}, s_{\varepsilon} \mid (s_is_j)^{m_{ij}}=e\ \text{for } 0 \leq i,j\leq n-1, \right. \\
&\left. s_{\varepsilon}^2=s_{\varepsilon},\ s_0 s_{1} s_{\varepsilon} s_{1} = s_{1} s_{\varepsilon} s_{1} s_{0}, \ s_0 s_{\varepsilon} = s_{\varepsilon} s_0 = s_{\varepsilon} , \right. \\
&\left. s_i s_{\varepsilon} = s_{\varepsilon} s_i \text{ for $i\neq 1$},\ s_{1} s_{\varepsilon} s_{1} s_{\varepsilon} = s_{\varepsilon} s_{1} s_{\varepsilon} s_{1}= s_{\varepsilon} s_{1} s_{\varepsilon} \right.\rangle, \\
\end{align*}
where $m_{ij} = \begin{cases}
1 & \text{ if $i=j$}, \\
2 & \text{ if $i$ and $j$ are not connected}, \\
3 & \text{ if $i$ and $j$ are connected by an edge with weight $1$}, \\
4 & \text{ if $i$ and $j$ are connected by an edge with weight $2$}.
\end{cases}$
By Lemma \ref{EF presentations}, we have $M(Q_2)=M(B_{n},\mathcal{B})$.



After the sequence $(n-1, n-2, \cdots, 2,0)$ of mutations, the $D^{\varepsilon}_n$ quiver 
\[
\xymatrix{\overset{0}{\circ} \ar[r] & \overset{2}{\circ} \ar[r]  &  \overset{3}{\circ} \ar@{-->}[r]  &  \overset{n-1}{\circ} \ar[r] & \overset{\varepsilon}{\bullet}\\
& \underset{1}{\circ} \ar[u] & &  & }
\]
becomes the following quiver (denoted by $Q_3$)
$$\xymatrix{& \overset{0} \circ \ar[dr] & &  & & \\
\overset{\varepsilon}{\bullet} \ar[ur] & &  \overset{2}{\circ}\ar[dl]\ar[r] & \overset{3}{\circ} \ar@{-->}[r] & \overset{n-2}{\circ} \ar[r] & \overset{n-1}
{\circ} &  \\
& \underset{1}{\circ} \ar[ul]& & &  &}.$$
Then by Definition \ref{definition relation}
\begin{align*}
M(Q_3)= & \left\langle s_0,s_1,\ldots,s_{n-1},s_{\varepsilon} \mid (s_is_j)^{m_{ij}}=e\ \text{for } 0 \leq i,j\leq n-1,\ s_{\varepsilon}^2=s_{\varepsilon}, \right. \\
&\left. s_i s_{\varepsilon} = s_{\varepsilon} s_i \text{ for $i> 1$},\ s_{\varepsilon} s_{j} s_{\varepsilon} s_{j}= s_{j} s_{\varepsilon} s_{j} s_{\varepsilon} = s_{\varepsilon} s_{j} s_{\varepsilon} \text{ for $j=0,1$}, \right. \\
&\left. s_0 s_{\varepsilon} s_{0}  = s_1 s_{\varepsilon} s_1,\ s_0 s_2 s_1 s_{\varepsilon} s_1 s_2 = s_2 s_1 s_{\varepsilon} s_1 s_2 s_0 \right\rangle,
\end{align*}
where
\begin{align*}
m_{ij} = \begin{cases}
1 & \text{ if $i=j$}, \\
2 & \text{ if $i$ and $j$ are not connected}, \\
3 & \text{ if $i$ and $j$ are connected by an edge with weight $1$}.
\end{cases}
\end{align*}

We claim that $M(Q_3)=M(D_{n},\mathcal{B})$, which follows from Lemma \ref{a chordless cycle lemma in M}~(3) and Lemma \ref{EF presentations}.
\end{proof}

For any quiver $Q\in\mathcal{U}(\Delta)$, by Theorem \ref{mutation-invariance} and Theorem \ref{Everitt and Fountain's presentations}, $M(Q)$ gives a presentation of the corresponding Boolean reflection monoid. In particular, our presentations recover the presentation of the symmetric inverse semigroup $\mathscr{I}_{n}$ defined in \cite{E11, P61}.

\subsection{Inner by diagram automorphisms of Boolean reflection monoids}
We first consider inner automorphisms of the Boolean reflection monoid $M(\Phi,\mathcal{B})$. 

An automorphism $\alpha$ of $M(\Phi,\mathcal{B})$ is called an \textit{inner automorphism} if there exists a uniquely determined element $g\in W(\Phi)$ of the Weyl group $W(\Phi)$ such that $\alpha(t)=gtg^{-1}$ for all $t\in M(\Phi,\mathcal{B})$. Let $\text{Aut}(G)$ (respectively, $\text{Inn}(G)$) be the automorphism  (respectively, inner automorphism) group of $G$ and $Z(G)$ be the center of $G$ for any semigroup $G$. 

It was showed in \cite{L53,ST97} that $\text{Aut}(M(A_{n-1},\mathcal{B}))=\text{Inn}(M(A_{n-1},\mathcal{B}))$. In other words, $\text{Aut}(M(A_{n-1},\mathcal{B}))\cong W(A_{n-1})/ Z(W(A_{n-1}))$ for $n\geq 3$ and $\text{Aut}(M(A_1,\mathcal{B}))\cong W(A_1)$.

As a generalization of the above result, we have the following theorem.

\begin{theorem}\label{inner automorphisms}
$\text{Inn}(M(\Phi,\mathcal{B}))\cong W(\Phi)/Z(W(\Phi))$ for $\Phi=A_{n-1} (\geq 3),\; B_n (n\geq 2)$,\; $D_n (n \geq 4)$.
\end{theorem}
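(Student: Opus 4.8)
The plan is to show that the map $g \mapsto \alpha_g$, where $\alpha_g(t) = gtg^{-1}$ for $t \in M(\Phi,\mathcal{B})$, induces an isomorphism $W(\Phi)/Z(W(\Phi)) \to \mathrm{Inn}(M(\Phi,\mathcal{B}))$. First I would verify that $\alpha_g$ is a well-defined automorphism of the monoid: since $g \in W(\Phi) \subseteq GL(V)$ acts on $V$ and permutes the Boolean system $\mathcal{B}$ (condition (2) of the definition of a system), conjugation sends a partial isomorphism $h_X$ to $(ghg^{-1})_{gX}$, which again lies in $M(\Phi,\mathcal{B})$; and $\alpha_{g^{-1}}$ is its inverse, so $\alpha_g \in \mathrm{Aut}(M(\Phi,\mathcal{B}))$. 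By definition every inner automorphism arises this way, so $g \mapsto \alpha_g$ is a surjective homomorphism $W(\Phi) \to \mathrm{Inn}(M(\Phi,\mathcal{B}))$.

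The crux is computing the kernel, i.e.\ showing $\alpha_g = \mathrm{id}$ if and only if $g \in Z(W(\Phi))$. The inclusion $Z(W(\Phi)) \subseteq \ker$ is immediate, since a central $g$ commutes with every element of the Weyl group $W(\Phi) \subseteq M(\Phi,\mathcal{B})$ and fixes every subspace $X \in \mathcal{B}$ setwise only if it acts trivially enough — so here I must be careful. The real content is the reverse inclusion: if $\alpha_g$ fixes every element of $M(\Phi,\mathcal{B})$, then in particular it fixes every element $h$ of the subgroup $W(\Phi)$ (these are the $h_V$ with full domain $V$), forcing $ghg^{-1} = h$ for all $h \in W(\Phi)$, hence $g \in Z(W(\Phi))$. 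The key step is thus identifying $W(\Phi)$ as the group of units (the $\mathcal{H}$-class of the identity) inside $M(\Phi,\mathcal{B})$, so that an automorphism's restriction to units recovers the Weyl-group conjugation action; this reduces the monoid computation to the classical fact that the centralizer of $W(\Phi)$ in itself is its center.

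I would then need to check that conjugation by $g$ fixing all of $W(\Phi)$ automatically fixes the idempotents $s_\varepsilon$-type elements and hence all of $M(\Phi,\mathcal{B})$, so that the kernel of $W(\Phi) \to \mathrm{Inn}(M(\Phi,\mathcal{B}))$ is exactly $Z(W(\Phi))$ and not something smaller; this uses that every element $g_X$ of the monoid factors as a unit composed with a restriction to a coordinate subspace $X(J) \in \mathcal{B}$, and that $\alpha_g$ acts on the restriction part through the induced permutation $g$ makes on $\mathcal{B}$, which is determined by the action on $W(\Phi)$. The restriction on $\Phi$ (excluding $A_1$ and $A_2$, i.e.\ requiring $n \geq 3$ in type $A$) is exactly the hypothesis under which $\mathrm{Aut}(M(A_{n-1},\mathcal{B})) = \mathrm{Inn}(M(A_{n-1},\mathcal{B}))$ was quoted above, and it is needed to ensure the center $Z(W(\Phi))$ is the full kernel rather than being swamped by extra symmetries. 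The main obstacle I anticipate is the bookkeeping in the reverse kernel inclusion: rigorously arguing that fixing the units forces fixing the partial isomorphisms requires unwinding the identity $g_Y h_Z = (gh)_{Z \cap h^{-1}Y}$ for composition together with $(g_Y)^{-1} = (g^{-1})_{gY}$, and confirming that conjugation interacts with the domain-intersection structure of $\mathcal{B}$ exactly through the Weyl-group action, which is where the Boolean (coordinate-subspace) structure of $\mathcal{B}$ is essential.
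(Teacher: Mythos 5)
Your proposal is correct in substance and takes a genuinely different route from the paper on the decisive step. The two arguments share the same skeleton: conjugation gives a surjective homomorphism $W(\Phi)\to\mathrm{Inn}(M(\Phi,\mathcal{B}))$, and the inclusion $\ker\subseteq Z(W(\Phi))$ follows because $W(\Phi)$ is the unit group of $M(\Phi,\mathcal{B})$, so an inner automorphism that is trivial on the monoid restricts to trivial conjugation on $W(\Phi)$. The difference lies in the other two checks. The paper stays inside its presentation machinery: it verifies that $g\Lambda g^{-1}$ again satisfies (R1)--(R4) (so conjugation is an automorphism), and for $Z(W(\Phi))\subseteq\ker$ it writes the longest element $w_0$ of $W(B_n)$ resp.\ $W(D_n)$ as an explicit product $w_nw_{n-1}\cdots w_1$ of reflections (following Franzsen) and computes from relations (R1), (R2), (R4) that $w_0s_\varepsilon w_0^{-1}=s_\varepsilon$; the type-$A$ case is quoted from the literature rather than proved. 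You instead work in the concrete monoid of partial linear isomorphisms, where $\alpha_g(h_X)=(ghg^{-1})_{gX}$, so well-definedness is immediate and the kernel inclusion reduces to the single statement that a central $g$ satisfies $gX=X$ for every $X\in\mathcal{B}$. Your route buys a much shorter computation (no $w_0$ factorization, no relation calculus) and treats all three types uniformly; the paper's route has the virtue of being purely presentation-theoretic, proving the statement for the monoid given by generators and relations without passing through the geometric model.

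One point you must make explicit, since as written it is the only soft spot: the phrase that the permutation $g$ induces on $\mathcal{B}$ ``is determined by the action on $W(\Phi)$'' is not itself an argument --- commuting with all of $W(\Phi)$ does not formally imply fixing each $X(J)$ setwise. What closes the gap is the concrete description of the centers: $Z(W(A_{n-1}))$ is trivial for $n\geq 3$, $Z(W(D_n))$ is trivial for $n$ odd, and the unique nontrivial central element of $W(B_n)$ ($n\geq 2$) and of $W(D_n)$ ($n$ even) is $-\mathrm{id}_V$, which fixes every subspace of $V$. Hence for central $g$ one has $\alpha_g(h_X)=(ghg^{-1})_{gX}=h_{gX}=h_X$ for all $h_X\in M(\Phi,\mathcal{B})$. (Equivalently: the $W(\Phi)$-action on $\mathcal{B}$ factors through the underlying permutation group, and central elements have trivial underlying permutation.) With that sentence supplied, your proof is complete.
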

\begin{proof}
Let $\alpha\in \text{Inn}(M(\Phi,\mathcal{B}))$. Since $W(\Phi) \subseteq M(\Phi,\mathcal{B})$ is the unique unit group of $M(\Phi,\mathcal{B})$, we have the fact that $\alpha|_{W(\Phi)}$ is an inner automorphism of $W(\Phi)$.

We will prove the parts of $\Phi=B_n (n\geq 2), \;D_n (\geq 4)$. Let $\Phi^\varepsilon$ be one of $B^\varepsilon_{n}$ and $D^\varepsilon_{n}$ shown in Table \ref{initial ABD}. Suppose that $\Lambda=\{s_0, s_1, \ldots, s_{n-1}, s_\varepsilon\}$ is a set of generators of $M(\Phi,\mathcal{B})$. For any element $g\in W(\Phi)$, we claim that the set 
\[
g\Lambda g^{-1}=\{gs_0g^{-1}, gs_1g^{-1}, \ldots, gs_{n-1}g^{-1}, gs_\varepsilon g^{-1}\}\]
is still a set of generators of $M(\Phi,\mathcal{B})$. Obviously, $(gs_ig^{-1})^2=e$, $(gs_\varepsilon g^{-1})^2=gs_\varepsilon g^{-1}$, and $g\Lambda g^{-1}$ satisfies (R2)--(R4) in Definition \ref{definition relation}.

Finally, we show that $g_1s_\varepsilon g^{-1}_1=g_2s_\varepsilon g^{-1}_2$ for any $g_1,g_2 \in Z(W(\Phi))$. It suffices to prove that $s_\varepsilon= w_0s_\varepsilon w^{-1}_0$, where $w_0$ is the longest element in $W(\Phi)$ and $w_0$ is an involution. By Section 1.2 of \cite{Fran01}, we have $w_0= w_n w_{n-1} \cdots w_1$, where $w_i = s_{i-1} \cdots s_1 s_0 s_1\cdots s_{i-1}$ for $i\geq 1$ in type $\Phi=B_n$, and $w_i=s_{i-1}\cdots s_3 s_2 s_1 s_0 s_2 s_3 \cdots s_{i-1}$ for $i\geq 3$ and $w_1=s_0$, $w_2=s_1$ in type $\Phi=D_n$. Then by (R1), (R2), and (R4) of Definition \ref{definition relation},
\begin{align*}
& w_0 s_\varepsilon w^{-1}_0 =w_n w_{n-1} \cdots w_1  s_\varepsilon w_1 \cdots w_{n-1} w_n = w_n s_\varepsilon w_n \\
& = \begin{cases}
s_{n-1} \cdots s_1 (s_0 s_1\cdots s_{n-1}  s_\varepsilon s_{n-1} \cdots s_1 s_0) s_1\cdots s_{n-1}= s_\varepsilon & \text{ in type $B_n$},\\
s_{n-1} \cdots s_3 s_2 s_1 (s_0 s_2 s_3 \cdots s_{n-1} s_\varepsilon s_{n-1} \cdots s_3 s_2 s_0) s_1 s_2 s_3 \cdots s_{n-1}= s_\varepsilon & \text{ in type $D_n$}.
\end{cases}
\end{align*}

The theorem is proved.
\end{proof}

Let $\Delta$ be one of $A^\varepsilon_{n-1}$, $B^\varepsilon_{n}$, and $D^\varepsilon_{n}$ shown in Table \ref{initial ABD}. Let $Q$ be a $\Delta$ quiver. Let $I \cup \{\varepsilon\}$ be the set of vertices of $Q$ and $Q'=\mu_{k}(Q)$ the quiver obtained by a mutation of $Q$ at a mutable vertex $k$. Following Barot and Marsh's work \cite{BM15}, one can define variables $t_i$ for $i \in I$, and $t_{\varepsilon}$ in $M(Q')$ as follows:
\begin{align}\label{formular of mutations}
\begin{split}
&t_i=
\begin{cases}
s_{k}s_{i}s_{k} & \text{if there is an arrow $i \to k$ in $Q$ (possibly weighted)}, \\
s_i & \text{otherwise},
\end{cases}\\
&t_{\varepsilon}=
\begin{cases}
s_{k}s_{\varepsilon} s_{k} & \text{if there is an arrow $\varepsilon \to k$ in $Q$ (possibly weighted)}, \\
s_{\varepsilon} & \text{otherwise}.
\end{cases}
\end{split}
\end{align}
From Lemma~\ref{the form of reflections} and Equation~(\ref{formular of mutations}), it follows that new elements $t_i$, $i\in I$, appearing in the procedure of mutations of quivers, must be some reflections in Weyl groups.

We introduce the definition of inner by diagram automorphisms of $M(\Phi,\mathcal{B})$.
\begin{definition}\label{inner by diagram automorphism of semigroups}
An inner by diagram automorphism of $M(\Phi,\mathcal{B})$ is an automorphism generated by some inner automorphisms and diagram automorphisms in $\text{Aut}(M(\Phi,\mathcal{B}))$.
\end{definition}

For simplicity, let $W(Q\backslash\{\varepsilon\})$ be the unit group of $M(Q)$. In the following theorem, we show that inner by diagram automorphisms of $M(\Phi,\mathcal{B})$ can be constructed by a sequence of mutations preserving the same underlying diagrams.

\begin{theorem}\label{mutate inner automorphisms}
Let $Q$ be a $\Delta$ quiver and $M(Q)$ the corresponding Boolean reflection monoid with a set $S$ of generators. Then $\alpha$ is an inner by diagram automorphism of $M(Q)$ if and only if there exists a sequence of mutations preserving the underlying diagram $\Delta$ such that $\alpha(S)$ can be obtained from $Q$ by mutations. In particular, all reflections in $W(Q\backslash\{\varepsilon\})$ and $g s_\varepsilon g^{-1}$ for $g\in W(Q\backslash\{\varepsilon\})$ can be obtained from $Q$ by mutations.
\end{theorem}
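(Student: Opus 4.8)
The plan is to follow the strategy of Theorem \ref{mutate inner automorphisms of weyl groups} on the unit group, and to control the extra generator $s_\varepsilon$ by means of the mutation rule (\ref{formular of mutations}). The central observation is that mutating twice at a mutable vertex $k$ returns the quiver $Q$ to itself and conjugates every generator, including the idempotent $s_\varepsilon$, by $s_k$. For $i\in I$ this is exactly the computation in the proof of Theorem \ref{mutate inner automorphisms of weyl groups}, giving $s_i\mapsto s_k s_i s_k$. For the frozen generator one checks the cases of (\ref{formular of mutations}) directly: if there is an arrow $\varepsilon\to k$, it is reversed by the first mutation and restored by the second, so $s_\varepsilon\mapsto s_k s_\varepsilon s_k$; if the arrow points $k\to\varepsilon$, the roles of the two mutations are interchanged with the same outcome; and if $\varepsilon$ and $k$ are not connected, then $s_\varepsilon$ is fixed, which agrees with $s_k s_\varepsilon s_k=s_\varepsilon$ since $s_k$ and $s_\varepsilon$ commute. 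As $s_k$ is a unit, conjugation by $s_k$ makes sense on all of $M(\Phi,\mathcal{B})$ even though $s_\varepsilon$ is not invertible.

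For necessity, write $\alpha$ as a diagram automorphism followed by an inner automorphism. The diagram part merely relabels the vertices while keeping $\Delta$ fixed, so after relabelling it suffices to realize an inner automorphism $\overline{\alpha}(t)=gtg^{-1}$ with $g\in W(\Phi)=W(Q\backslash\{\varepsilon\})$. Fix a reduced word $g=s_{i_1}s_{i_2}\cdots s_{i_r}$ in the mutable simple reflections and mutate the sequence $i_1,i_1,i_2,i_2,\ldots,i_r,i_r$. By the previous paragraph each block $i_t,i_t$ conjugates the current generating set by the element attached to vertex $i_t$, and an induction identical to that of Theorem \ref{mutate inner automorphisms of weyl groups} shows that after the $t$-th block the generators are $s_{i_1}\cdots s_{i_t}\,S\,s_{i_t}\cdots s_{i_1}$. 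After the final block the generators are $gSg^{-1}=\overline{\alpha}(S)$, and since each block returns the quiver to $Q$, the whole sequence preserves $\Delta$. The ``in particular'' statement is read off from the same computation: the variable sitting at a mutable vertex $i$ after these mutations is $gs_ig^{-1}$, which by Lemma \ref{the form of reflections} is an arbitrary reflection of $W(Q\backslash\{\varepsilon\})$, while the variable at $\varepsilon$ is $gs_\varepsilon g^{-1}$.

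For sufficiency, suppose $\alpha(S)$ arises from $Q$ by a sequence of mutations preserving $\Delta$. The terminal quiver is then again a $\Delta$ quiver, so Theorem \ref{mutation-invariance} guarantees that $S\mapsto\alpha(S)$ extends to an automorphism of $M(Q)$. By (\ref{formular of mutations}) and Lemma \ref{the form of reflections}, the variable produced at each mutable vertex is a reflection and the variable at $\varepsilon$ has the form $gs_\varepsilon g^{-1}$. Consequently the restriction $\alpha|_{W(\Phi)}$ sends simple reflections to reflections, hence sends every reflection to a reflection, so by Franzsen's result (Proposition 1.44 of \cite{Fran01}) it is an inner by diagram automorphism of $W(\Phi)$. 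Since $M(\Phi,\mathcal{B})$ is generated by its unit group $W(\Phi)$ together with $s_\varepsilon$, and the image $\alpha(s_\varepsilon)=gs_\varepsilon g^{-1}$ is precisely the value forced by the corresponding inner by diagram automorphism of the monoid, $\alpha$ is itself inner by diagram.

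The main obstacle is the sufficiency direction: promoting ``reflection-preserving on the unit group and conjugate on $s_\varepsilon$'' to ``inner by diagram on the whole monoid'' requires knowing that an automorphism of $M(\Phi,\mathcal{B})$ is pinned down by its effect on $W(\Phi)$ and on $s_\varepsilon$, and that this effect matches that of a genuine inner by diagram automorphism; this is where the monoid structure, together with the identification of $\text{Inn}(M(\Phi,\mathcal{B}))$ in Theorem \ref{inner automorphisms}, must be used carefully, in contrast to the purely group-theoretic input that suffices in Theorem \ref{mutate inner automorphisms of weyl groups}. The necessity direction, by comparison, is a direct and type-independent computation once the double-mutation behaviour of $s_\varepsilon$ is verified.
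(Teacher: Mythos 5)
Your first two paragraphs are sound and match the paper: the paper likewise invokes Theorem \ref{inner automorphisms} to write an inner automorphism as $t\mapsto gtg^{-1}$ with $g\in W(Q\backslash\{\varepsilon\})$, absorbs the diagram part into a relabelling, and mutates the doubled sequence $i_1,i_1,\ldots,i_r,i_r$; your explicit case-check that a double mutation at $k$ conjugates $s_\varepsilon$ as well as the $s_i$ by $s_k$ is a correct elaboration of what the paper dispatches with ``the same arguments as Theorem \ref{mutate inner automorphisms of weyl groups}''. The ``in particular'' clause is handled the same way in both.

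The sufficiency direction, however, has a genuine gap, and it sits exactly at the point you defer to in your closing paragraph. From the mutation rule you get that $\alpha(s_\varepsilon)=g's_\varepsilon g'^{-1}$ for \emph{some} $g'\in W(\Phi)$, and from Franzsen's result that $\alpha|_{W(\Phi)}=\text{Inn}(w)\circ\delta$ for some possibly \emph{different} $w\in W(\Phi)$ and diagram automorphism $\delta$. To conclude that $\alpha$ is inner by diagram on the monoid you must show these data are compatible, i.e.\ that $\alpha(s_\varepsilon)=w s_\varepsilon w^{-1}$ (note $\delta$ fixes $\varepsilon$). This is not automatic: $h s_\varepsilon h^{-1}$ is the partial identity on the coordinate hyperplane $h\cdot\dom(s_\varepsilon)$, so conjugates of $s_\varepsilon$ by elements in different cosets of the stabilizer of that hyperplane are genuinely different idempotents. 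Your sentence ``$\alpha(s_\varepsilon)=gs_\varepsilon g^{-1}$ is precisely the value forced by the corresponding inner by diagram automorphism of the monoid'' asserts this compatibility rather than proves it. The paper's proof does real work here: writing the mutated generators as $t_i=gs_ig^{-1}$ ($0\le i\le n-1$) together with $t_\varepsilon$, it uses the relations $t_\varepsilon t_i=t_it_\varepsilon$ for $0\le i\le n-2$ — valid because the terminal quiver has underlying diagram $\Delta$, in which $\varepsilon$ is joined only to vertex $n-1$ — combined with the fact that $t_\varepsilon$ is a conjugate of $s_\varepsilon$, to force $t_\varepsilon=gs_\varepsilon g^{-1}$: the commutation makes the domain of the idempotent $g^{-1}t_\varepsilon g$ invariant under $\langle s_0,\ldots,s_{n-2}\rangle\cong W(B_{n-1})$ or $W(D_{n-1})$, and the only coordinate hyperplane with this invariance is $\dom(s_\varepsilon)$. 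Your argument never invokes the relations satisfied by $\alpha(s_\varepsilon)$, so this pinning-down step is entirely missing; adding it (or, in type $A$, using the paper's shortcut that $\text{Aut}(M(A_{n-1},\mathcal{B}))=\text{Inn}(M(A_{n-1},\mathcal{B}))$, which settles sufficiency at once) would complete the proof.
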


\begin{proof}
Suppose that $S=\{s_1, s_2, \ldots, s_{n-1}, s_\varepsilon\}$ for $\Delta=A^\varepsilon_{n-1}$ or $S=\{s_0, s_1, \ldots, s_{n-1}, s_\varepsilon\}$ for $\Delta=B^\varepsilon_{n},\; D^\varepsilon_{n}$.

All automorphisms of $M(A_{n-1},\mathcal{B})$ are inner, see Theorem \ref{inner automorphisms}. A sequence $\mu$ of mutations preserving the underlying diagram of $Q$ induces to an inner automorphism of $M(A_{n-1},\mathcal{B})$. 

The remainder proof of the sufficiency is to prove types $B_n$ and $D_n$. Since all automorphisms of irreducible Weyl groups that preserve reflections are inner by diagram automorphisms, we assume without loss of generality that $M(\mu(Q))$ is generated by $\{t_0, t_1, \ldots, t_{n-1}, t_\varepsilon\}$, where $t_i = gs_ig^{-1}$ for $0 \leq i \leq n-1$, $g \in W(B_n)$ or $W(D_n)$. We claim that $t_\varepsilon = gs_\varepsilon g^{-1}$. If $t_\varepsilon = gs_\varepsilon g^{-1}$, then $\{t_0, t_1, \ldots, t_{n-1}, t_\varepsilon\}$ is a set of generators of $M(\mu(Q))$ and $\mu(Q)$ preserves the underlying diagram $\Delta$. 

On the one hand, since $t_\varepsilon t_i = t_i t_\varepsilon$ for $0 \leq i \leq n-2$, we have $t_\varepsilon \in Z(W(B_{n-1}))$ or $Z(W(D_{n-1}))$, where $\{t_\varepsilon t_0, t_\varepsilon t_1, \ldots, t_\varepsilon t_{n-2}\}$ is a set of generators of $W(B_{n-1})$ or $W(D_{n-1})$. On the other hand, the variable $t_\varepsilon$ must be of the form $g's_\varepsilon {g'}^{-1}$ for some $g' \in W(B_n)$ or $W(D_{n})$). So $t_\varepsilon$ is not the longest element $w_0$ in $W(B_{n-1})$ or $W(D_{n-1})$ and hence $t_\varepsilon$ must be the unique identity element in $W(B_{n-1})$ or $W(D_{n-1})$. Therefore by the uniqueness $t_\varepsilon=gs_\varepsilon g^{-1}$.

Conversely, for each inner automorphism $\alpha$ of $M(Q)$, by Theorem \ref{inner automorphisms}, there exists an element $g\in W(Q\backslash \{\varepsilon\})$ such that $\alpha(t)=gtg^{-1}$ for all $t\in M(Q)$. The remainder proof of the necessity is similar to the proof of the necessity of Theorem \ref{mutate inner automorphisms of weyl groups}.

Each reflection in $W(Q\backslash \{\varepsilon\})$ is of the form $gs_ig^{-1}$, where $g=s_{i_1}s_{i_2}\cdots s_{i_k} \in W(Q\backslash \{\varepsilon\})$ is a reduced expression for $g$. By the same arguments as Theorem \ref{mutate inner automorphisms of weyl groups}, we mutate the sequence $i_1, i_1, i_2,i_2, \ldots, i_k, i_k$ starting from $Q$, we obtain $gs_ig^{-1}$ and $g s_\varepsilon g^{-1}$.
\end{proof}

\subsection{Cellularity of semigroup algebras of Boolean reflection monoids}
In this section, we show that semigroup algebras of Boolean reflection monoids are cellular algebras. We use the presentations we obtained to interpret cellular bases of such cellular algebras.

Let $R$ be a commutative ring with identity. Recall that a semigroup S is said to be cellular if its semigroup algebra $R[S]$ is a cellular algebra.

\begin{proposition}\label{cellularity of Boolean reflection monoid}
The Boolean reflection monoid $M(\Phi,\mathcal{B})$ for $\Phi= A_{n-1}$, $B_{n}$, or $D_{n}$ is a cellular semigroup.
\end{proposition}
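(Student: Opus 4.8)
The plan is to invoke East's criterion, Theorem~\ref{East's results}, which reduces cellularity of the semigroup algebra $R[M(\Phi,\mathcal{B})]$ to two verifications: that each maximal subgroup $H_D$ attached to a $\mathcal{D}$-class $D$ is a cellular algebra, and that the natural map $i\colon [e,f,g]_D \mapsto [f,e,i_D(g)]_D$ assembles into an $R$-linear anti-homomorphism of $R[M(\Phi,\mathcal{B})]$. Since $M(\Phi,\mathcal{B})$ is a finite inverse monoid (it sits inside the finite general linear monoid $M(V)$ and $\mathcal{D}=\mathcal{J}$ holds), both hypotheses of East's theorem are available to us.

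First I would describe the $\mathcal{D}$-class structure explicitly. A partial isomorphism $g_X \in M(\Phi,\mathcal{B})$ has domain $X = X(J)$ for some $J \subseteq \{1,\dots,n\}$, and two elements are $\mathcal{D}$-related precisely when their domains have the same dimension $|J|$; thus the $\mathcal{D}$-classes are indexed by $0 \le k \le n$. The key structural point, standard for Boolean reflection monoids, is that the maximal subgroup $H_D$ of the $\mathcal{D}$-class of rank $k$ is isomorphic to the Weyl group (parabolic subgroup) $W(\Phi_k)$ acting on a $k$-dimensional coordinate subspace: for type $A$ this is $S_k$, for type $B/C$ it is the hyperoctahedral group $W(B_k)$, and for type $D$ it is $W(D_k)$ for $k\ge 2$ with the appropriate small-rank degenerations. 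So the first hypothesis of East's theorem amounts to the cellularity of these finite Weyl groups, which is exactly Geck's theorem that the Hecke algebra of a finite Coxeter group—specialised at $q=1$ to the group algebra—is cellular. I would cite \cite{G06,G07} for this.

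Second I would verify that the involution $i$ is a well-defined $R$-linear anti-homomorphism. The natural candidate is the map induced by $g_X \mapsto (g_X)^{-1} = (g^{-1})_{gX}$, the semigroup inversion on the inverse monoid, extended $R$-linearly. On each maximal subgroup the compatible anti-automorphism $i_D$ is the one coming from Geck's cellular datum (inversion combined with the cellular anti-automorphism of $W(\Phi_k)$), and the bookkeeping in East's setup is designed so that $[e,f,g]_D \mapsto [f,e,i_D(g)]_D$ interacts correctly with the Green's-relation decomposition $x = a_i g a_j^{-1}$. I would check that this map respects products of basis elements $[e,f,g]_D$, reversing order, which is a routine compatibility computation once the $\mathcal{D}$-class data are laid out.

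The main obstacle, and where the real content lies, is pinning down the maximal subgroups and the matching anti-automorphisms uniformly across the three types, especially the type-$D$ case where the small-rank behaviour and the index-two subgroup structure require care, and ensuring the global involution $i$ is genuinely anti-multiplicative rather than merely an involution on each $\mathcal{D}$-class. Once the subgroups are identified as the expected Weyl groups and Geck's cellularity supplies each $(\Lambda_D, M_D, C_D, i_D)$, the conclusion follows directly from Theorem~\ref{East's results}, yielding the cell datum $(\Lambda, M, C, i)$ with $\Lambda = \{(D,\lambda) : D \in M(\Phi,\mathcal{B})/\mathcal{D},\ \lambda \in \Lambda_D\}$; I would then use our presentations to write the cellular basis elements $C^{(D,\lambda)}_{(e,s),(f,t)} = [e,f,C^{\lambda}_{s,t}]_D$ concretely in terms of the generators $s_i, s_\varepsilon$.
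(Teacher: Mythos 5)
Your proposal follows essentially the same route as the paper's proof: invoke East's criterion (Theorem~\ref{East's results}), observe that every maximal subgroup $H_D$ is a finite reflection group and hence cellular by Geck's theorem \cite{G07} with inversion as the cellular anti-involution $i_D$, and take the global map $i$ to be the $R$-linear extension of semigroup inversion, so that $i([e,f,g]_D)=[f,e,g^{-1}]_D=[f,e,i_D(g)]_D$. The only point you over-anticipate as ``real content'' is the anti-multiplicativity of $i$: since $(xy)^{-1}=y^{-1}x^{-1}$ holds in any inverse semigroup, inversion is automatically an anti-automorphism of the semigroup algebra, and no further bookkeeping with the $[e,f,g]_D$ coordinates is needed beyond the identity $[e,f,g]_D^{-1}=[f,e,g^{-1}]_D$ already recorded in the paper's preliminaries.
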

\begin{proof}
All maximal subgroups of the Boolean reflection monoid $M(\Phi,\mathcal{B})$ are finite reflection groups. It has been shown in \cite{G07} that any finite reflection group $W(\Phi)$ is cellular with respect to which the anti-involution is inversion. Therefore, for each $\mathcal{D}$-class $D$ of $M(\Phi,\mathcal{B})$, the subgroup $H_D\subseteq M(\Phi,\mathcal{B})$ is cellular with cell datum $(\Lambda_D,M_D,C_D,i_D)$, where $i_D: H_D \to H_D$ is given by $i_D(g)=g^{-1}$ for any $g\in H_D$, which satisfies East's first assumption, see Theorem 15 in \cite{E06} or Theorem \ref{East's results}.

We define a map
\begin{align*}
i: R[M(\Phi,\mathcal{B})] & \to R[M(\Phi,\mathcal{B})] \\
\sum_{j}r_jg_j & \mapsto \sum_{j}r_jg^{-1}_j,
\end{align*}
where $r_j\in R$, $g_j\in M(\Phi,\mathcal{B})$. The map $i$ is an $R$-linear anti-homomorphism and $i([e,f,g]_D)=[e,f,g]^{-1}_D=[f,e,g^{-1}]_D=[f,e,i_D(g)]_D$ for any $g \in H_D$, $e \ \mathcal{D}\ f$ in $M(\Phi,\mathcal{B})$.

From Theorem 19 in \cite{E06} or Theorem \ref{East's results}, it follows that the Boolean reflection monoid $M(\Phi,\mathcal{B})$ is a cellular semigroup, as required.
\end{proof}
\begin{remark}
A finite inverse semigroup whose maximal subgroups are direct products of symmetric groups has been considered by East, see Theorem 22 of \cite{E06}. Maximal subgroups of $M(A_{n-1},\mathcal{B})$ are isomorphic to symmetric groups. Maximal subgroups of $M(B_n,\mathcal{B})$ (respectively, $M(D_n,\mathcal{B})$) are isomorphic to finite reflection groups of type $B_r$ (respectively, finite reflection groups of type $D_r$), where $r \leq n$.
\end{remark}



Let $\Delta$ be one of $A^\varepsilon_{n-1}$, $B^\varepsilon_{n}$, and $D^\varepsilon_{n}$ shown in Table \ref{initial ABD}. For two quivers with the same underlying diagrams appearing in $\mathcal{U}(\Delta)$, we construct inner by diagram automorphisms of Boolean reflection monoids, see Theorem \ref{mutate inner automorphisms}, and then we extend it to an $R$-linear automorphism of semigroup algebras of Boolean reflection monoids. Applying Corollary \ref{new cellular bases}, we interpret cellular bases of semigroup algebras of Boolean reflection monoids in terms of the presentations and inner by diagram automorphisms we obtained.

\subsection{An example}
In this section, we denote by $\mathscr{I}_n$ the symmetric inverse semigroup on $[n]=\{1,2,\ldots,n\}$. Let $w$ be a partial permutation on a set $A \subseteq [n]$ and denote the image of $i\in A$ under the map $w$ by $w_i$ and the image of $i\not\in A$ under the map $w$ by $w_i = \emptyset$. Then $w$ is denoted by $\begin{pmatrix}
    1 & 2 &  \cdots & n-1 & n \\
    w_1 & w_2 & \cdots &  w_{n-1}  & w_n
  \end{pmatrix}$, see \cite{L96}. A partial permutation $w$ is called an element of rank $i$ if the number of non-empty entries $w_j$ for $1\leq j\leq n$ is $i$.


\begin{example}
Let $Q_0$ be the quiver in Example \ref{example A3} and by the results of preceding sections, $M(Q_0) \cong \mathscr{I}_3$ a Boolean reflection monoid. We have $M(Q_0)/\mathcal{D} = \{D_0 < D_1 < D_2 < D_3\}$, where each $D_i$ is the set of all elements of $M(Q_0)$ of rank $i$, and idempotents in each $D_i$ are all partial identity permutations of rank $i$.

Let $A$ be a subset of $\{1,2,3\}$. As shown in Example 23 of \cite{E06}, the $\mathcal{H}$-class containing the idempotent $\textrm{id}_A$ is the subgroup $\{x\in \mathscr{I}_3 \;|\; \textrm{im}(x) = \textrm{dom}(x) = A\} \cong S_{|A|}$. It is well known that the group algebra $R[S_n]$ has cellular bases with respect to which the anti-involution is inversion. Indeed the Khazdan-Luzstig bases and the Murphy basis both have this property (see, Example (1.2) of \cite{GL96}, Example (2.2) of \cite{M99} or Section 4 of \cite{M95}).

Let $s_1 = \begin{pmatrix}
    1 & 2 & 3 \\
    2 & 1 & 3
  \end{pmatrix}$, $s_2 =\begin{pmatrix}
    1 & 2 & 3 \\
    1 & 3 & 2
  \end{pmatrix}$, and $s_\varepsilon=\begin{pmatrix}
    1 & 2 & 3 \\
    1 & 2 & \emptyset
  \end{pmatrix}$. By mutating the quiver $Q_0$, we obtain the following isomorphic quivers (Theorems \ref{inner automorphisms} and \ref{mutate inner automorphisms}).
\begin{figure}[H]
\begin{align*}
& (a) \ \xymatrix{
\overset{s_1}{\circ}  \ar[r] & \overset{s_2}{\circ} \ar[r] & \overset{s_\varepsilon}{\bullet}} \\
& (b) \ \xymatrix{
\overset{s_1}{\circ}  \ar[r] & \overset{s_1s_2s_1}{\circ} \ar[r] & \overset{s_\varepsilon}{\bullet}} \\
& (c) \ \xymatrix{
\overset{s_2s_1s_2}{\circ}  \ar[r] & \overset{s_2}{\circ} \ar[r] & \overset{s_2s_\varepsilon s_2}{\bullet}} \\
& (d) \ \xymatrix{
\overset{s_2}{\circ}  \ar[r] & \overset{s_1s_2s_1}{\circ} \ar[r] & \overset{s_1s_2s_\varepsilon s_2s_1}{\bullet}} \\
& (e) \ \xymatrix{
\overset{s_2s_1s_2}{\circ}  \ar[r] & \overset{s_1}{\circ} \ar[r] & \overset{s_2s_\varepsilon s_2}{\bullet}} \\
& (f) \ \xymatrix{
\overset{s_2}{\circ}  \ar[r] & \overset{s_1}{\circ} \ar[r] & \overset{s_1s_2s_\varepsilon s_2s_1}{\bullet}}
\end{align*}
\end{figure}
From Theorem~\ref{mutation-invariance} and Theorem \ref{Everitt and Fountain's presentations}, it follows that the inverse monoids associated to quivers (a)--(f) are isomorphic to the symmetric inverse semigroup $\mathscr{I}_3$ respectively. The presentation of $\mathscr{I}_3$ determined by the quiver (a) admits an initial cellular bases by Theorem 19 of \cite{E06} or Theorem \ref{East's results}. By using these presentations corresponding to quivers (a)--(f), we construct an $R$-linear automorphism of $R[\mathscr{I}_3]$ and then applying Corollary \ref{new cellular bases}, we give an alternative interpretation of cellular bases of $R[\mathscr{I}_3]$.
\end{example}

\section{Mutations of quivers of finite type}\label{diagrams of finite types}
Throughout this section, let as before $\Delta$ be one of $A^\varepsilon_{n-1}$, $B^\varepsilon_{n}$, and $D^\varepsilon_{n}$ in Table \ref{initial ABD}. We consider the way of mutations of $\Delta$ quivers and the oriented cycles appearing in them, refer to \cite{BM15, FZ03}. We first recall:

\begin{proposition}[{\cite[Proposition 9.7]{FZ03}}]\label{FZ oriented cycle}
Let $Q$ be a quiver of finite type. Then a chordless cycle in the underlying diagram of $Q$ is always cyclically oriented in $Q$. In addition, the chordless cycle must be one of those shown in Figure \ref{the oriented cycle in finite type}.
\end{proposition}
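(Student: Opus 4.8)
The statement is quoted from Fomin--Zelevinsky, and the route I would take to reprove it is cluster-theoretic, reducing the whole question to a single cycle. The plan is to exploit the weight bound for finite type: in the mutation class of a finite-type quiver every edge-weight is at most $3$ (equivalently, such quivers are $2$-finite), so if some sequence of mutations ever produces an edge of weight $\geq 4$, the quiver cannot be of finite type.

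\emph{Reduction to a full subquiver on a cycle.} Because the cycle $C$ is chordless, the vertices lying on $C$ span a \emph{full} subquiver $Q|_{C}$ whose underlying graph is exactly the $d$-cycle, with no extra edges. Restriction to a full subquiver commutes with mutation at a vertex of that subquiver, that is $\mu_{k}(Q)|_{C}=\mu_{k}(Q|_{C})$ for $k\in C$, so the mutation class of $Q|_{C}$ is the image of a finite set under restriction and is therefore finite by \cite{FZ03}. Thus $Q|_{C}$ is again of finite type, and it suffices to classify the finite-type quivers whose underlying diagram is a single $d$-cycle and to check that each is cyclically oriented with the weights shown in Figure~\ref{the oriented cycle in finite type}.

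\emph{Cyclic orientation.} I would treat $d=3$ directly: an acyclically oriented triangle has a unique transit vertex $k$ (one incoming and one outgoing cycle-edge), and mutating at $k$ modifies the opposite edge through $-\sqrt{c}+\sqrt{c'}=\sqrt{ab}$; with all weights $1$ this reads $-1+\sqrt{c'}=1$, forcing $c'=4$ and hence infinite type. For general $d$ I would induct: a non-cyclically oriented $d$-cycle contains a transit vertex $k$, and mutating at $k$ replaces the cycle by a $(d-1)$-cycle (using the newly created edge between the two neighbours of $k$) together with a $3$-cycle on $k$ and its neighbours; tracking the orientations shows either the shorter cycle is still not cyclically oriented, so the induction descends to $d=3$, or a weight already exceeds $3$. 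The one case without a transit vertex is the \emph{alternating} even cycle, where I would first perform a single preparatory mutation at a sink, which converts a neighbour into a transit vertex without altering the underlying cycle, and then proceed as above.

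\emph{Weight classification.} Granting cyclic orientation, I would finally pin down the weights. For $d\geq 4$ I would show that an edge of weight $\geq 2$, after mutation at one of its endpoints, creates a weight violating the bound, so all weights equal $1$; these are the cycles occurring in types $A$ and $D$. For $d=3$ I would enumerate the cyclically oriented weighted triangles that stay $2$-finite, comparing against the finite mutation classes of $B_{n}$, $C_{n}$ and $F_{4}$: this leaves exactly the all-ones triangle and the triangle with two double edges and one single edge, and rules out any triangle carrying a weight-$3$ edge (a weight $3$ lives only in rank-$2$ type $G_{2}$, which has no cycle) as well as the remaining mixed weightings. These are precisely the cycles displayed in Figure~\ref{the oriented cycle in finite type}. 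I expect the inductive step for cyclic orientation to be the main obstacle, since keeping the signs in $\pm\sqrt{c}\pm\sqrt{c'}=\sqrt{ab}$ consistent while contracting the cycle --- and disposing of the alternating even case --- is where the bookkeeping is most delicate.
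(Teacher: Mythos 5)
The decisive problem is your weight classification for $d\geq 4$: it is false, and it contradicts the very figure the proposition refers to. The middle diagram of Figure~\ref{the oriented cycle in finite type} is a chordless $4$-cycle whose two opposite edges have weight $2$, and this cycle does occur in finite type: starting from the $F_4$ quiver $1\xrightarrow{\,1\,}2\xrightarrow{\,2\,}3\xrightarrow{\,1\,}4$ and mutating at $2$, then $3$, then $2$, one obtains exactly the cyclically oriented square with weights $2,1,2,1$. Your proposed mechanism --- ``an edge of weight $\geq 2$, after mutation at one of its endpoints, creates a weight violating the bound'' --- fails on this square: each weight-$2$ edge is adjacent only to weight-$1$ edges, so mutating at a shared endpoint creates an edge of weight $2\cdot 1=2$, and the result is two $(2,2,1)$-triangles glued along a weight-$2$ edge, which is again $2$-finite. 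What your computation honestly gives is only that two adjacent edges whose weights multiply to more than $3$ are forbidden; this does not force all weights to be $1$ when $d\geq 4$, and it cannot by itself separate the admissible $(2,1,2,1)$ square from inadmissible longer patterns such as a $(2,1,2,1,1)$ pentagon, both of which pass the adjacent-product test. Distinguishing these is exactly the nontrivial content of the proposition, and it is absent from your sketch; as written, your argument establishes a classification strictly smaller than the one asserted (and asserts, incorrectly, that it matches the figure).

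Two further points. First, the inference ``the mutation class of $Q|_{C}$ is finite, therefore $Q|_{C}$ is of finite type'' is invalid: finiteness of a mutation class does not imply finite type (the Kronecker quiver --- a single edge of weight $4$ in diagram language --- is alone in its mutation class but of infinite type). What you actually need, and what your commutation identity $\mu_{k}(Q)|_{C}=\mu_{k}(Q|_{C})$ does give, is that $Q|_{C}$ is $2$-finite, since every quiver mutation-equivalent to $Q|_{C}$ is a restriction of one mutation-equivalent to $Q$; that repair is routine, unlike the $d\geq 4$ step. Second, for calibration: the cyclic-orientation half of your plan is sound in outline --- the transit-vertex induction does descend (the shortened chordless cycle is non-oriented exactly when the original one is), the sink trick handles the alternating even cycle, and the base case $d=3$ works for general weights, where an acyclic triangle mutates to an edge of weight $ab+c+2\sqrt{abc}\geq 4$. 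Note finally that the paper contains no proof of this statement to compare against --- it is quoted from \cite{FZ03} --- so the proposal must stand on its own, and in its present form it does not.
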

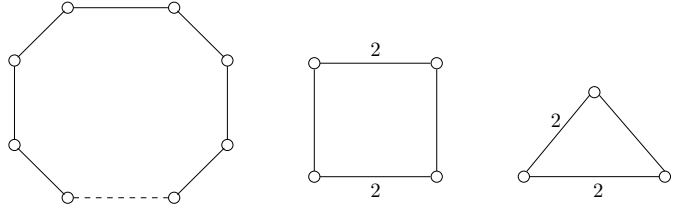
\begin{figure}[H]
\resizebox{.7\width}{.7\height}{
\begin{tikzpicture}
\draw (0.1075,1)--(1.8825,1);
\draw (0,1) circle (3pt);
\draw (2,1) circle (3pt);
\draw (-1,0) circle (3pt);
\draw (-0.94,0.06)--(-0.06,0.94);
\draw (3,0) circle (3pt);
\draw (2.94,0.06)--(2.06,0.94);
\draw (-1,-1.6) circle (3pt);
\draw (-1,-0.1075)--(-1,-1.4925);
\draw (3,-1.6) circle (3pt);
\draw (3,-0.1075)--(3,-1.4925);
\draw (0,-2.6) circle (3pt);
\draw (-0.94,-1.66)--(-0.06,-2.54);
\draw (2,-2.6) circle (3pt);
\draw[dashed] (0.1075,-2.6)--(1.8825,-2.6);
\draw (2.94,-1.66)--(2.06,-2.54);
\end{tikzpicture}} \ \ \ \ \ \
\resizebox{.7\width}{.7\height}{
\begin{tikzpicture}
\draw (0.1075,0)--(2.1825,0);
\node [below] at (1.15,0) {2};
\draw (0.1075,2.15)--(2.1825,2.15);
\node [above] at (1.15,2.15) {2};
\draw (0,0.1075)--(0,2.0325);
\draw (2.3,0.1075)--(2.3,2.0325);
\draw (0,0) circle (3pt);
\draw (0,2.15) circle (3pt);
\draw (2.3,0) circle (3pt);
\draw (2.3,2.15) circle (3pt);
\end{tikzpicture}} \ \  \ \  \ \
\resizebox{.7\width}{.7\height}{
\begin{tikzpicture}
\draw (0.11,1)--(2.54,1);
\node [below] at (1.4,1) {2};
\draw (0.055,1.1)--(1.243,2.55);
\node [above] at (0.6,1.8) {2};
\draw (1.42,2.54)--(2.65,1.1);
\draw (0,1) circle (3pt);
\draw (1.325,2.6) circle (3pt);
\draw (2.65,1) circle (3pt);
\end{tikzpicture}}
\caption{The chordless cycles in $Q$.}\label{the oriented cycle in finite type}
\end{figure}

We extend Corollary 2.3 in \cite{BM15} to the case of $\Delta$ quivers.

\begin{lemma} \label{three vertices subdiagram mutation corollary}
Let $Q$ be a $\Delta$ quiver and $k$ a mutable vertex of $Q$. Suppose that $k$ has two neighbouring vertices. Then the effect of the mutation of $Q$ at $k$ on the induced subdiagram must be as in Figure \ref{three vertices subdiagram mutation 1} (starting on one side or the other), up to switching the two neighbouring vertices.
\end{lemma}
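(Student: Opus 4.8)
The plan is to localize the mutation and then carry out a finite case analysis governed by Proposition~\ref{FZ oriented cycle} and the mutation rule of Figure~\ref{mutation diagram}. First I would observe that because $k$ has exactly two neighbours, say $i$ and $j$, the mutation $\mu_k$ only reverses the two arrows incident to $k$ (leaving their weights intact, by rule (i)), and the sole two-edge oriented path through $k$ joins $i$ and $j$; hence by rules (ii)--(iii) the only edge whose weight can be created, changed, or destroyed is the $i$--$j$ edge. Consequently the entire effect of $\mu_k$ is confined to the full subquiver on $\{i,j,k\}$, and it suffices to classify this subquiver before and after the mutation.

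Next I would split into the two possible shapes of the underlying subdiagram: either $i$ and $j$ are non-adjacent, so $\{i,j,k\}$ spans a path $i-k-j$, or $i$ and $j$ are adjacent, so $\{i,j,k\}$ spans a triangle. In the latter case the triangle is a chordless $3$-cycle, so by Proposition~\ref{FZ oriented cycle} it is cyclically oriented and its weights are either all equal to $1$, or else two of them equal $2$ and the third equals $1$. Since $\Delta\in\{A^{\varepsilon}_{n-1},B^{\varepsilon}_{n},D^{\varepsilon}_{n}\}$ is simply laced in types $A$ and $D$ and carries only edges of weight $1$ and $2$ in type $B$ (a weight-$3$ edge occurs only in types not on our list), every edge weight in $Q$ lies in $\{1,2\}$; in particular $w_{ik},w_{kj}\in\{1,2\}$. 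This bounds all the data.

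Then I would feed these finitely many configurations into the mutation rule. Writing $a=w_{ik}$, $b=w_{kj}$, and $c,c'$ for the old and new weights of the $i$--$j$ edge, the relation $\pm\sqrt{c}\pm\sqrt{c'}=\sqrt{ab}$, together with the sign convention (a ``$+$'' exactly when the three vertices form an oriented cycle, which by Proposition~\ref{FZ oriented cycle} is automatic whenever the $i$--$j$ edge is present), determines $c'$ in each case. Running through the admissible triples $(a,b,c)$ with $a,b\in\{1,2\}$ and $c\in\{0,1,2\}$ reproduces exactly the transitions displayed in Figure~\ref{three vertices subdiagram mutation 1}; the interchange $i\leftrightarrow j$ accounts for ``up to switching the two neighbouring vertices'', and the two cyclic orientations of a $3$-cycle account for ``starting on one side or the other''. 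Along the way one checks that the combinatorially impossible outputs do not occur: for instance $a=b=2$ with $c=0$ would force $c'=4$, which is excluded by finiteness, so a vertex incident to two weight-$2$ edges must already lie in the double-weight triangle.

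The routine but essential part is the sign bookkeeping in $\pm\sqrt{c}\pm\sqrt{c'}=\sqrt{ab}$ and the verification that the orientations remain consistent with Proposition~\ref{FZ oriented cycle} (every resulting $3$-cycle being oriented), after which matching the admissible cases against the figure is immediate. I expect the only genuine subtlety to be confirming that the frozen vertex requires no separate treatment: since the statement concerns the underlying weighted diagram and $\varepsilon$ is an ordinary vertex of the finite-type quiver obtained by unfreezing it (as in the proof of Lemma~\ref{mutation class with frozen vertex}), the classification of chordless cycles and the local mutation analysis apply verbatim whether or not one of $i,j$ equals $\varepsilon$.
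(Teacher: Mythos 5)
Your argument is correct in substance, but it follows a genuinely more self-contained route than the paper. The paper's entire proof is one sentence: after unfreezing $\varepsilon$, a quiver in $\mathcal{U}(\Delta)$ is mutation equivalent to a Dynkin quiver of type $A_n$, $B_{n+1}$ or $D_{n+1}$, so the local classification is exactly Corollary 2.3 of \cite{BM15} combined with Proposition \ref{FZ oriented cycle}; the frozen vertex is carried along as a label, which is precisely the observation in your closing paragraph. What you do instead is re-derive that corollary from first principles: localize $\mu_k$ to the full subquiver on $\{i,j,k\}$ (legitimate, since $k$ having exactly two neighbours means rule (ii) can only touch the $i$--$j$ edge), use Proposition \ref{FZ oriented cycle} to force every triangle to be oriented with weight pattern $(1,1,1)$ or $(2,2,1)$, bound all weights by $2$ in types $A^{\varepsilon}$, $B^{\varepsilon}$, $D^{\varepsilon}$, and then run $\pm\sqrt{c}\pm\sqrt{c'}=\sqrt{ab}$ over the finitely many admissible configurations, matching them against Figure \ref{three vertices subdiagram mutation 1}. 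This costs you the case bookkeeping but makes the lemma independent of the Barot--Marsh corollary; the paper's citation is shorter but opaque about where each picture in the figure comes from.

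One step needs tightening: the exclusion of a degree-two mutable vertex $k$ carrying two weight-$2$ edges with $c=0$. Your computation ``$c'=4$, excluded by finiteness'' only applies when $i\to k\to j$ is an \emph{oriented} path; if both arrows point into $k$ (or both out of $k$), rule (ii) is vacuous and $\mu_k$ merely reverses the two arrows, producing no weight-$4$ edge, so no contradiction arises from that single mutation. To close this case you can argue either that the full subdiagram $i \overset{2}{-} k \overset{2}{-} j$ is the affine diagram $\widetilde{C}_2$, hence not of finite type, while full subdiagrams of diagrams mutation equivalent to Dynkin diagrams must be of finite type \cite{FZ03}; or, more in the spirit of your argument, that two further mutations (first at $i$, which reverses the $i$--$k$ arrow and leaves the $k$--$j$ arrow and the absent $i$--$j$ edge untouched, then at $k$, where the path through $k$ is now oriented) produce an edge of weight $4$, contradicting finite type. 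With that patch the case analysis is complete.
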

\begin{proof}
This follows from Proposition \ref{FZ oriented cycle} and Corollary 2.3 in \cite{BM15} by restricting quivers to Dynkin quivers of types $A_n$, $B_{n+1}$, and $D_{n+1}$ with a frozen vertex $\varepsilon$.
\end{proof}

\begin{figure}[H]
$\begin{array}{lcl}
& \resizebox{.8\width}{.8\height}{
(a) \xymatrix{
& \overset{k}{\circ} &  \\
\underset{i}{\circ}  \ar[ur] &  &  \underset{j}{\circ} \ar[ul]} $\overset{\mu_{k}}{\longleftrightarrow}$
\xymatrix{
& \overset{k}{\circ} \ar[dr] \ar[dl] &  \\
\underset{i}{\circ}&  &  \underset{j}{\circ}}}  & \resizebox{.8\width}{.8\height}{
(b)\xymatrix{
& \overset{k}{\circ} \ar[dr] &  \\
\underset{i}{\circ}  \ar[ur] &  &  \underset{j}{\circ}}  $\overset{\mu_{k}}{\longleftrightarrow}$
\xymatrix{
& \overset{k}{\circ} \ar[dl] &  \\
\underset{i}{\circ} \ar[rr] & & \underset{j}{\circ} \ar[ul]}} \\
& \resizebox{.8\width}{.8\height}{
(c)\xymatrix{
& \overset{k}{\circ} &  \\
\underset{i}{\circ}  \ar[ur] &  &  \underset{j}{\circ} \ar@{->}[ul]_{2}} $\overset{\mu_{k}}{\longleftrightarrow}$
\xymatrix{
& \overset{k}{\circ} \ar@{->}[dr]^{2} \ar[dl] &  \\
\underset{i}{\circ}&  &  \underset{j}{\circ}}}
& \resizebox{.8\width}{.8\height}{
(d)\xymatrix{
& \overset{k}{\circ} \ar[dr] &  \\
\underset{i}{\circ}  \ar@{->}[ur]^{2} &  &  \underset{j}{\circ}}  $\overset{\mu_{k}}{\longleftrightarrow}$
\xymatrix{
& \overset{k}{\circ} \ar@{->}[dl]_{2} &  \\
\underset{i}{\circ} \ar@{->}[rr]_{2} & & \underset{j}{\circ} \ar[ul] }} \\
& \resizebox{.8\width}{.8\height}{
(e)\xymatrix{
& \overset{k}{\circ} \ar@{->}[dr]^{2} &  \\
\underset{i}{\circ}  \ar[ur] &  &  \underset{j}{\circ}} $\overset{\mu_{k}}{\longleftrightarrow}$
\xymatrix{
& \overset{k}{\circ} \ar[dl] &  \\
\underset{i}{\circ} \ar@{->}[rr]_{2} &  &  \underset{j}{\circ} \ar@{->}[ul]_{2} }} & \resizebox{.8\width}{.8\height}{
(f)\xymatrix{
& \overset{k}{\circ} \ar@{->}[dr]^{2}&  \\
\underset{i}{\circ}  \ar@{->}[ur]^{2} &  &  \underset{j}{\circ} \ar[ll] } $\overset{\mu_{k}}{\longleftrightarrow}$
\xymatrix{
& \overset{k}{\circ} \ar@{->}[dl]_{2} &  \\
\underset{i}{\circ} \ar[rr] & & \underset{j}{\circ} \ar@{->}[ul]_{2}}}\\
& \resizebox{.8\width}{.8\height}{
(a') \xymatrix{
& \overset{k}{\circ} &  \\
\underset{i}{\circ}  \ar[ur] &  &  \underset{\varepsilon}{\bullet} \ar[ul] } $\overset{\mu_{k}}{\longleftrightarrow}$
\xymatrix{
& \overset{k}{\circ} \ar[dr] \ar[dl] &  \\
\underset{i}{\circ}&  &  \underset{\varepsilon}{\bullet}}} & \resizebox{.8\width}{.8\height}{
(b')\xymatrix{
& \overset{k}{\circ} \ar[dr] &  \\
\underset{i}{\circ}  \ar[ur] &  &  \underset{\varepsilon}{\bullet}}  $\overset{\mu_{k}}{\longleftrightarrow}$
\xymatrix{
& \overset{k}{\circ} \ar[dl] &  \\
\underset{i}{\circ} \ar[rr] & & \underset{\varepsilon}{\bullet} \ar[ul]}}  \\
& \resizebox{.8\width}{.8\height}{
(c')\xymatrix{
& \overset{k}{\circ} \ar[dl] &  \\
\underset{i}{\circ} &  &  \underset{\varepsilon}{\bullet} \ar[ul] }  $\overset{\mu_{k}}{\longleftrightarrow}$
\xymatrix{
& \overset{k}{\circ} \ar[dr] &  \\
\underset{i}{\circ} \ar[ur] & & \underset{\varepsilon}{\bullet} \ar[ll]}} &
\resizebox{.8\width}{.8\height}{
(d')\xymatrix{
& \overset{k}{\circ} &  \\
\underset{i}{\circ} \ar@{->}[ur]^{2} &  &  \underset{\varepsilon}{\bullet} \ar@{->}[ul]}  $\overset{\mu_{k}}{\longleftrightarrow}$
\xymatrix{
& \overset{k}{\circ} \ar@{->}[dl]_{2} \ar@{->}[dr] &  \\
\underset{i}{\circ}  &  &  \underset{\varepsilon}{\bullet}}} \\
& \resizebox{.8\width}{.8\height}{
(e')\xymatrix{
& \overset{k}{\circ} \ar[dr] &  \\
\underset{i}{\circ}  \ar@{->}[ur]^{2} &  &  \underset{\varepsilon}{\bullet}}  $\overset{\mu_{k}}{\longleftrightarrow}$
\xymatrix{
& \overset{k}{\circ} \ar@{->}[dl]_{2} &  \\
\underset{i}{\circ} \ar@{->}[rr]_{2} & & \underset{\varepsilon}{\bullet} \ar[ul]}} & \resizebox{.8\width}{.8\height}{
(f')\xymatrix{
& \overset{k}{\circ} \ar@{->}[dl]_{2} &  \\
\underset{i}{\circ}  &  &  \underset{\varepsilon}{\bullet}\ar@{->}[ul]}  $\overset{\mu_{k}}{\longleftrightarrow}$
\xymatrix{
& \overset{k}{\circ} \ar@{->}[dr] &  \\
\underset{i}{\circ} \ar@{->}[ur]^{2} & & \underset{\varepsilon}{\bullet} \ar@{->}[ll]^{2}}} \\
& \resizebox{.8\width}{.8\height}{
(g')\xymatrix{
& \overset{k}{\circ} \ar@{->}[dl]_{2} &  \\
\underset{i}{\circ} \ar@{->}[rr] & & \underset{\varepsilon}{\bullet} \ar[ul]_{2}} $\overset{\mu_{k}}{\longleftrightarrow}$
\xymatrix{
& \overset{k}{\circ} \ar@{->}[dr]^{2} &  \\
\underset{i}{\circ} \ar@{->}[ur]^{2} & & \underset{\varepsilon}{\bullet} \ar@{->}[ll]}}\\
\end{array}$
\caption{Local pictures of mutations of $Q$.}\label{three vertices subdiagram mutation 1}
\end{figure}
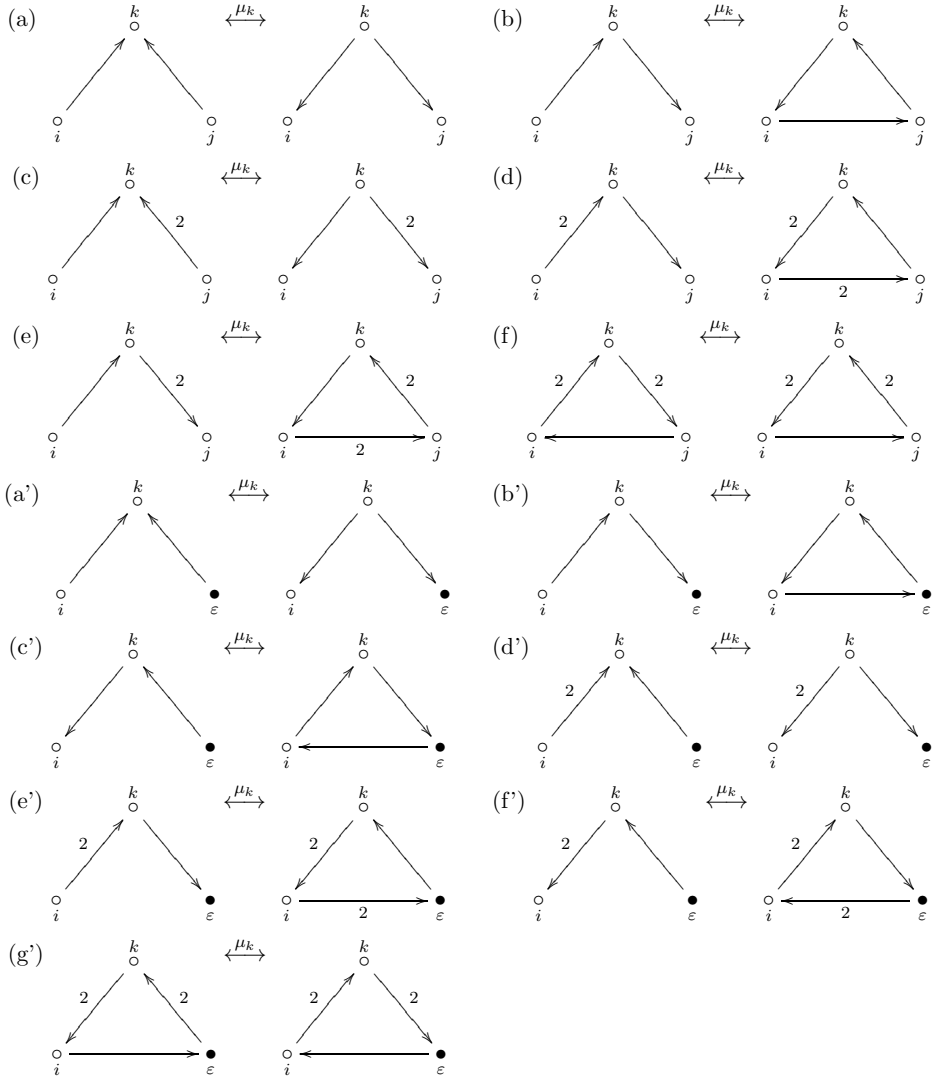

In a diagram, a vertex is said to be \textit{connected} to another if there is an edge between them. Let $Q$ be a quiver that is mutation equivalent to a Dynkin quiver. In Lemma 2.4 of \cite{BM15}, Barot and Marsh described the way that vertices in $Q$ can be connected to a chordless cycle: A vertex is connected to at most two vertices of a chordless cycle, and if it is connected to two vertices, then the two vertices must be adjacent in the cycle.

The following lemma is a generalization of Barot and Marsh's results {\cite[Lemma 2.5]{BM15}}.
\begin{lemma} \label{diagrams with frozen}
Let $Q$ be a $\Delta$ quiver and $Q'=\mu_{k}(Q)$ the mutation of $Q$ at vertex $k$. We list various types of induced subquivers in $Q$ (on the left) and corresponding cycles $C'$ in $Q'$ (on the right) arising from the mutation of $Q$ at $k$. The diagrams are drawn so that $C'$ is always a clockwise cycle in Figures \ref{Induced subquivers and the corresponding chordless cycles 1} and \ref{the corresponding chordless cycles 2}. Then every chordless cycle in $Q'$ arises in such a way.
\end{lemma}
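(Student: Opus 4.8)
The plan is to follow the strategy of Barot and Marsh's Lemma~2.5 in \cite{BM15}, enlarging their case analysis to accommodate the frozen vertex $\varepsilon$ and the weight-$2$ edges that occur near it in types $B$ and $D$. Since $\Delta$ quivers are of finite type (viewing $\varepsilon$ as mutable places $Q$ in the mutation class of a Dynkin quiver of type $A_n$, $B_{n+1}$, or $D_{n+1}$), Proposition~\ref{FZ oriented cycle} applies to both $Q$ and $Q'$: every chordless cycle is cyclically oriented and is one of the three shapes in Figure~\ref{the oriented cycle in finite type}, namely a $3$-cycle with all weights $1$, an oriented $4$-cycle with two opposite weight-$2$ edges, or a $3$-cycle with two weight-$2$ edges. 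In particular each chordless cycle has length $3$ or $4$, so the whole argument reduces to a finite check.

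First I would exploit that mutation is an involution, so $Q=\mu_k(Q')$, and that $\mu_k$ alters only the edges incident to $k$ together with the edges joining two common neighbours of $k$; every other edge (and its weight) is untouched. Fixing a chordless cycle $C'$ in $Q'$, I would split into the cases $k\in C'$ and $k\notin C'$. If $k\notin C'$ and no edge of $C'$ was altered by $\mu_k$, then $C'$ is already a chordless cycle in $Q$ and arises trivially. Otherwise some vertex of $C'$ is a neighbour of $k$, and here I would invoke Lemma~2.4 of \cite{BM15} --- a vertex is connected to at most two vertices of a chordless cycle, and if to two then they are adjacent in the cycle --- to pin down exactly how $k$ attaches to $C'$. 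This bounds the induced subquiver of $Q$ on $C'\cup\{k\}$ to a small, explicitly describable configuration.

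Next I would run the enumeration. For each admissible shape of $C'$ and each admissible way $k$ can attach (including the case $k\in C'$, where the edges of $C'$ at $k$ are simply reversed), I would reconstruct the induced subquiver of $Q$ by applying $\mu_k$ to $Q'$; the resulting local pictures are precisely those tabulated on the left-hand sides of Figures~\ref{Induced subquivers and the corresponding chordless cycles 1} and~\ref{the corresponding chordless cycles 2}. The three-vertex sub-case is already supplied by Lemma~\ref{three vertices subdiagram mutation corollary} and Figure~\ref{three vertices subdiagram mutation 1}, which I would use as the base of the enumeration; the genuinely new work is the book-keeping of weights under the relation $\pm\sqrt{c}\pm\sqrt{c'}=\sqrt{ab}$ of Figure~\ref{mutation diagram} when one vertex of $C'$ is $\varepsilon$ and the incident edges carry weight $2$.

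The hard part will be ensuring exhaustiveness once $\varepsilon$ enters the cycle: the weight-$2$ edges at $\varepsilon$ in types $B$ and $D$ produce the degenerate $3$-cycles of the last shape in Figure~\ref{the oriented cycle in finite type} and must be matched against the $\varepsilon$-rows of the figures, and I would need to verify that no chordless cycle through $\varepsilon$ can arise other than those listed, using the restrictions on the neighbourhood of $\varepsilon$ recorded in the descriptions of $\mathscr{\phi}(B^\varepsilon_n)$ and $\mathscr{\phi}(D^\varepsilon_n)$ (the frozen vertex has at most two incident arrows, and if two it lies in a single oriented $3$-cycle). Assembling these observations, every chordless cycle $C'$ in $Q'$ is accounted for by one of the induced subquivers in the figures, which is the assertion of the lemma.
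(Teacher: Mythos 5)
There is a genuine gap, and it occurs right at the start: you misread Proposition \ref{FZ oriented cycle}. The first shape in Figure \ref{the oriented cycle in finite type} is not a $3$-cycle but a cyclically oriented chordless cycle of \emph{arbitrary} length $d\geq 3$ with all edge weights $1$ (the dashed edge indicates continuation). Consequently your claim that ``each chordless cycle has length $3$ or $4$, so the whole argument reduces to a finite check'' is false. In the mutation class $\mathcal{U}(D^{\varepsilon}_n)$, the central cycles of Type IV quivers (Figure \ref{Type IV}) are chordless oriented cycles of any length $d\geq 3$ with unit weights, and indeed case (c) of Figure \ref{Induced subquivers and the corresponding chordless cycles 1} exists precisely to handle mutation at a vertex of an oriented $d$-cycle with $d\geq 4$, which splits it into a $3$-cycle and a $(d-1)$-cycle. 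Your enumeration, being restricted to cycles of length at most $4$, omits all of these configurations, so exhaustiveness --- the actual content of the lemma --- is not established.

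The length bound ``$3$ or $4$'' is correct only for chordless cycles \emph{through} $\varepsilon$, and that statement itself requires proof; it is exactly part (2) of the paper's argument, which rules out a cycle of length $\geq 5$ through $\varepsilon$ by showing it would force a forbidden subquiver in the mutation class, and separately constrains the weights at $\varepsilon$. You cannot obtain it as a corollary of Proposition \ref{FZ oriented cycle} alone. For the cycles avoiding $\varepsilon$ the paper simply cites Lemma 2.5 of \cite{BM15} (whose proof is close in spirit to your splitting on $k\in C'$ versus $k\notin C'$ together with their Lemma 2.4), so that portion of your plan is sound but redundant; the part that is genuinely new in this setting --- cycles through the frozen vertex, and long cycles --- is where your proposal either gives no argument or rests on the false length claim.
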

\begin{proof}
Let $C'$ be a chordless cycle in $Q'$. We will divide $C'$ into two classes, depending on whether or not it contains the frozen vertex $\varepsilon$.

(1) If $C'$ does not contain the frozen vertex $\varepsilon$, then $C'$ is one of (a)--(g), which follows from Proposition \ref{FZ oriented cycle} and Lemma 2.5 in \cite{BM15}.
 
(2) If $C'$ contains the frozen vertex $\varepsilon$, then $C'$ must be a cycle of length 3 or 4. Otherwise, $C'$ is a cycle of length at least 5. By Proposition \ref{FZ oriented cycle}, all edges of $C'$ have trivial weights and $C'$ is mutation equivalent to the following subquiver
\[
\xymatrix{
& \bullet \ar[d] & & &  \\
\circ \ar[r]  & \circ \ar[r]  &\circ \ar[r]  & \circ \ar[r] & \cdots}.
\]
If $\varepsilon$ has two incident arrows, then by Proposition \ref{FZ oriented cycle}, either both of them have trival weight or one of them has weigh $2$ and the other has weight 1. This is because if $\varepsilon$ has two incident arrows with weight 2, then $C'$ is mutation equivalent to $\xymatrix{\circ \ar[r] & \circ \ar[r]^2 & \bullet}$.
\end{proof}

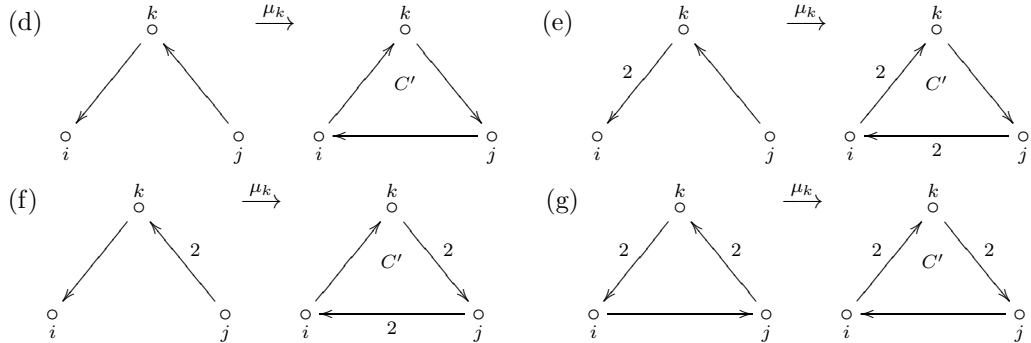
\begin{figure}[H]
\begin{itemize}
\item[(a)] The vertex $k$ does not connect to an oriented chordless cycle $C$ in $Q$. Then $C$ is the corresponding cycle in $Q'$.
 \item[(b)] The vertex $k$ connects to one vertex of an oriented chordless cycle $C$ in $Q$ (via an edge of unspecified weight). Then $C$ is the corresponding cycle in $Q'$.
\item [(c)] The vertex $k$ is one vertex of an oriented $d$-cycle without $\varepsilon$, $d\geq 4$, in $Q$. Then the local picture of $\mu_k(Q)$ becomes a $3$-cycle and a $(d-1)$-cycle, and they share a common arrow, but the orientations of two cycles are opposite, or vice versa. 
\end{itemize}
\begin{align*}
\begin{array}{lc}
\resizebox{.9\width}{.9\height}{
(d) \xymatrix{
& \overset{k}{\circ}  \ar[dl] &  \\
\underset{i}{\circ}  &  &  \underset{j}{\circ} \ar[ul]} $\overset{\mu_{k}}{\longrightarrow}$
\xymatrix{
& \overset{k}{\circ} \ar[dr]  &  \\
\underset{i}{\circ} \ar[ur] & \ar @{} [u] |{C'}  & \underset{j}{\circ} \ar[ll]}} & \resizebox{.9\width}{.9\height}{
(e) \xymatrix{
& \overset{k}{\circ} \ar@{->}[dl]_{2}&  \\
\underset{i}{\circ} &  & \underset{j}{\circ} \ar[ul] } $\overset{\mu_{k}}{\longrightarrow}$
\xymatrix{
& \overset{k}{\circ} \ar[dr] &  \\
\underset{i}{\circ} \ar@{->}[ur]^{2} &  \ar @{} [u] |{C'} & \underset{j}{\circ} \ar@{->}[ll]^{2}}} \\
\resizebox{.9\width}{.9\height}{
(f)\xymatrix{
& \overset{k}{\circ} \ar[dl] &  \\
\underset{i}{\circ} &  & \underset{j}{\circ} \ar@{->}[ul]_{2}} $\overset{\mu_{k}}{\longrightarrow}$
\xymatrix{
& \overset{k}{\circ} \ar@{->}[dr]^{2} &  \\
\underset{i}{\circ} \ar[ur] &  \ar @{} [u] |{C'} &\underset{j}{\circ}\ar@{->}[ll]^{2}}}  & \resizebox{.9\width}{.9\height}{
(g)\xymatrix{
& \overset{k}{\circ} \ar@{->}[dl]_{2} &  \\
\underset{i}{\circ}  \ar[rr] &  & \underset{j}{\circ} \ar@{->}[ul]_{2}} $\overset{\mu_{k}}{\longrightarrow}$
\xymatrix{
& \overset{k}{\circ} \ar[dr]^{2} &  \\
\underset{i}{\circ} \ar@{->}[ur]^{2} &  \ar @{} [u] |{C'} & \underset{j}{\circ} \ar[ll]}}
\end{array}
\end{align*}
\caption{Induced subquivers and the corresponding chordless cycles, part 1.}\label{Induced subquivers and the corresponding chordless cycles 1}
\end{figure}


\begin{figure}
\begin{align*}
\begin{array}{lc}
\resizebox{.8\width}{.8\height}{
(a')\xymatrix{
& \overset{k}{\circ} \ar[dl] &  \\
 \underset{\varepsilon}{\bullet} &  & \underset{i}{\circ}  \ar[ul] }  $\overset{\mu_{k}}{\longrightarrow}$
\xymatrix{
& \overset{k}{\circ} \ar[dr] &  \\
\underset{\varepsilon}{\bullet} \ar[ur] &  \ar @{} [u] |{C'} & \underset{i}{\circ}  \ar[ll]}} & \resizebox{.8\width}{.8\height}{
(b')\xymatrix{
& \overset{k}{\circ} \ar[dl] &  \\
\underset{i}{\circ} &  &  \underset{\varepsilon}{\bullet} \ar[ul] }  $\overset{\mu_{k}}{\longrightarrow}$
\xymatrix{
& \overset{k}{\circ} \ar[dr] &  \\
\underset{i}{\circ} \ar[ur] &  \ar @{} [u] |{C'}  & \underset{\varepsilon}{\bullet} \ar[ll]}} \\
\resizebox{.8\width}{.8\height}{
(c')\xymatrix{
& \overset{k}{\circ} \ar[dl] &  \\
\underset{\varepsilon}{\bullet}  &  & \underset{i}{\circ}\ar@{->}[ul]_{2}}  $\overset{\mu_{k}}{\longrightarrow}$
\xymatrix{
& \overset{k}{\circ} \ar[dr]^{2} &  \\
\underset{\varepsilon}{\bullet}\ar[ur] &  \ar @{} [u] |{C'} & \underset{i}{\circ} \ar[ll]^{2}}} & \resizebox{.8\width}{.8\height}{
(d')\xymatrix{
& \overset{k}{\circ} \ar@{->}[dl]_{2} &  \\
\underset{i}{\circ}  &  &  \underset{\varepsilon}{\bullet}\ar@{->}[ul]}  $\overset{\mu_{k}}{\longrightarrow}$
\xymatrix{
& \overset{k}{\circ} \ar@{->}[dr] &  \\
\underset{i}{\circ} \ar@{->}[ur]^{2} & \ar @{} [u] |{C'} & \underset{\varepsilon}{\bullet} \ar@{->}[ll]^{2}}} \\
\resizebox{.8\width}{.8\height}{
(e')\xymatrix{
& \overset{k}{\circ} \ar@{->}[dl]_{2} &  \\
\underset{i}{\circ} \ar@{->}[rr] & & \underset{\varepsilon}{\bullet} \ar[ul]_{2}} $\overset{\mu_{k}}{\longrightarrow}$
\xymatrix{
& \overset{k}{\circ} \ar@{->}[dr]^{2} &  \\
\underset{i}{\circ} \ar@{->}[ur]^{2} &  \ar @{} [u] |{C'} & \underset{\varepsilon}{\bullet} \ar@{->}[ll]}} & \resizebox{.8\width}{.8\height}{
(f')\xymatrix{
& \overset{k}{\circ} \ar@{->}[dl]_{2} &  \\
\underset{\varepsilon}{\bullet} \ar@{->}[rr] & & \underset{i}{\circ} \ar[ul]_{2}} $\overset{\mu_{k}}{\longrightarrow}$
\xymatrix{
& \overset{k}{\circ} \ar@{->}[dr]^{2} &  \\
\underset{\varepsilon}{\bullet} \ar@{->}[ur]^{2} &  \ar @{} [u] |{C'} & \underset{i}{\circ} \ar@{->}[ll]}} \\
\resizebox{.8\width}{.8\height}{
(g')\xymatrix{
\overset{j}{\circ} \ar@{->}[drr] & & \overset{k}{\circ}  \ar@{->}[ll] \\
\underset{i}{\circ} \ar@{->}[u] & & \underset{\varepsilon}{\bullet}\ar@{->}[ll] \ar@{->}[u]}
$\overset{\mu_{k}}{\longrightarrow}$
\xymatrix{
\overset{j}{\circ}  \ar@{->}[rr] & & \overset{k}{\circ} \ar@{->}[d]  \\
\underset{i}{\circ} \ar@{->}[u] &  \ar @{} [u] |{C'} & \underset{\varepsilon}{\bullet}\ar@{->}[ll] }} & \resizebox{.8\width}{.8\height}{
(h')\xymatrix{
\overset{j}{\circ}  \ar@{->}[rr] & & \overset{k}{\circ} \ar@{->}[d]  \\
\underset{i}{\circ} \ar@{->}[u] &  & \underset{\varepsilon}{\bullet}\ar@{->}[ll] }
$\overset{\mu_{k}}{\longrightarrow}$
\xymatrix{
\overset{j}{\circ} \ar@{->}[drr] & & \overset{k}{\circ}  \ar@{->}[ll] \\
\ar @{} [ur] |{C'} \underset{i}{\circ} \ar@{->}[u] &  & \underset{\varepsilon}{\bullet}\ar@{->}[ll] \ar@{->}[u]}}\\ 
\resizebox{.8\width}{.8\height}{
(i') \xymatrix{
\overset{j}{\circ} \ar@{->}[rr] & & \overset{i}{\circ} \ar@{->}[d] \\
\underset{k}{\circ} \ar@{<-}[u] \ar@{->}[rr]  & & \underset{\varepsilon}{\bullet}\ar@{->}[ull]}\ $\overset{\mu_{k}}{\longrightarrow}$ \
\xymatrix{
\overset{j}{\circ}\ar@{->}[rr] & & \overset{i}{\circ} \ar@{->}[d]  \\
\underset{k}{\circ} \ar@{->}[u] &  \ar @{} [u] |{C'} &  \underset{\varepsilon}{\bullet}\ar@{->}[ll]}} & \resizebox{.8\width}{.8\height}{
(j') \xymatrix{
\overset{j}{\circ}\ar@{->}[rr] & & \overset{i}{\circ} \ar@{->}[d]  \\
\underset{k}{\circ} \ar@{->}[u] & &  \underset{\varepsilon}{\bullet}\ar@{->}[ll]} \ $\overset{\mu_{k}}{\longrightarrow}$ \
\xymatrix{
\overset{j}{\circ} \ar@{->}[rr] & & \overset{i}{\circ} \ar@{->}[d] \\
\underset{k}{\circ} \ar@{<-}[u] \ar@{->}[rr]  & &  \ar @{} [ul] |{C'} \underset{\varepsilon}{\bullet}\ar@{->}[ull]}} \\
\resizebox{.8\width}{.8\height}{
(k')\xymatrix{
\overset{k}{\circ} \ar@{->}[d] & & \overset{i}{\circ}  \ar@{->}[ll] \ar@{->}[d] \\
\underset{j}{\circ} \ar@{->}[urr] & & \underset{\varepsilon}{\bullet}\ar@{->}[ll]} $\overset{\mu_{k}}{\longrightarrow}$
\xymatrix{
\overset{k}{\circ}\ar@{->}[rr] & & \overset{i}{\circ} \ar@{->}[d]  \\
\underset{j}{\circ} \ar@{->}[u] &   \ar @{} [u] |{C'} &  \underset{\varepsilon}{\bullet}\ar@{->}[ll]}}  &  \resizebox{.8\width}{.8\height}{
(l') \xymatrix{
\overset{k}{\circ}\ar@{->}[rr] & & \overset{i}{\circ} \ar@{->}[d]  \\
\underset{j}{\circ} \ar@{->}[u] &  &  \underset{\varepsilon}{\bullet}\ar@{->}[ll]}$\overset{\mu_{k}}{\longrightarrow}$ \xymatrix{
\overset{k}{\circ} \ar@{->}[d] & & \overset{i}{\circ}  \ar@{->}[ll] \ar@{->}[d] \\
\underset{j}{\circ} \ar@{->}[urr] & & \ar @{} [ul] |{C'} \underset{\varepsilon}{\bullet}\ar@{->}[ll]}}\\
\end{array}
\end{align*}
\caption{Induced subquivers and the corresponding chordless cycles, part 2.}\label{the corresponding chordless cycles 2}
\end{figure}
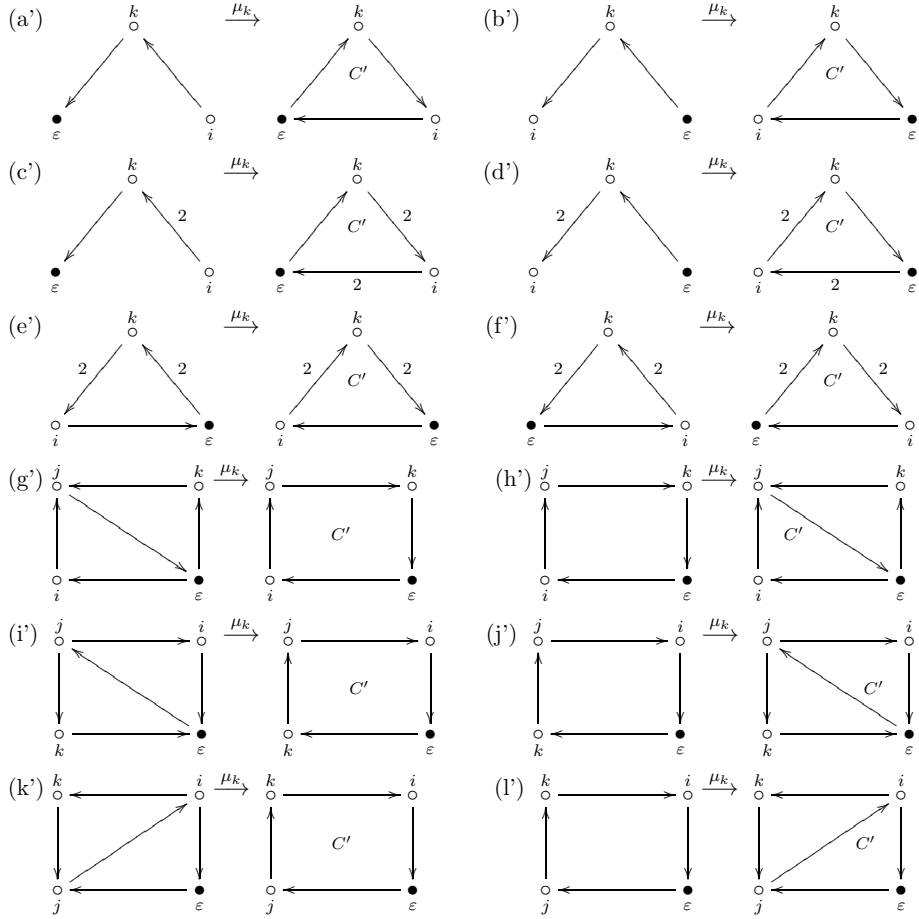

\section{Cycle relations and path relations}\label{cycle relations and path relations}

In this section, we find an efficient subset of the relations to define Boolean reflection monoids, which generalizes Barot and Marsh's results, Lemmas 4.1, 4.2, 4.4 and Proposition 4.6 in \cite{BM15}.

Let $\Delta$ be one of $A^\varepsilon_{n-1}$, $B^\varepsilon_{n}$, and $D^\varepsilon_{n}$ in Table \ref{initial ABD}. Suppose that $Q$ is any quiver that is mutation equivalent to a $\Delta$ quiver. The following lemma gives an efficient subset of relations (R3) and (R4) in Definition \ref{definition relation}.

\begin{lemma}\label{a chordless cycle lemma in M}
Let $M(Q)$ be an inverse monoid with generators subject to relations (R1) and (R2) in Definition \ref{definition relation}.

\begin{itemize}
\item[(1)] If $Q$ contains a chordless cycle $C'_3$, see Figure \ref{lemma 5.2}, then the following statements are equivalent:
\begin{itemize}
  \item[(a)] $s_\varepsilon s_1 s_2 s_1 = s_1 s_2 s_1 s_\varepsilon$;
  \item[(b)] $s_1s_2s_\varepsilon s_2=s_2 s_\varepsilon s_2 s_1$.
\end{itemize}
Furthermore, if one of the above holds, then the following statements are equivalent:
\begin{itemize}
  \item[(c)] $s_\varepsilon s_1 s_\varepsilon = s_\varepsilon s_1 s_\varepsilon s_1 = s_1 s_\varepsilon s_1 s_\varepsilon$;
  \item[(d)] $s_\varepsilon s_2 s_\varepsilon = s_\varepsilon s_2 s_\varepsilon s_2=s_2 s_\varepsilon s_2 s_\varepsilon$.
\end{itemize}
\item[(2)] If $Q$ contains a chordless cycle $C''_3$, see Figure \ref{lemma 5.2}, then $s_1s_2s_\varepsilon s_2=s_2 s_\varepsilon s_2 s_1$.
\item[(3)] If $Q$ contains a chordless cycle $C'_4$, see Figure \ref{lemma 5.2}, then the following statement holds:
\begin{itemize}
  \item[(a)] $s_\varepsilon s_1 s_2 s_3 s_2 s_1 = s_1 s_2 s_3 s_2 s_1 s_\varepsilon$;
  \item[(b)] $s_1 s_2 s_3 s_\varepsilon s_3 s_2 = s_2 s_3 s_\varepsilon s_3 s_2 s_1$.
\end{itemize}
Furthermore, if one of the above holds, then the following statements are equivalent:
\begin{itemize}
  \item[(c)] $s_\varepsilon s_1 s_\varepsilon = s_\varepsilon s_1 s_\varepsilon s_1 = s_1 s_\varepsilon s_1 s_\varepsilon$;
  \item[(d)] $s_\varepsilon s_3 s_\varepsilon = s_\varepsilon s_3 s_\varepsilon s_3=s_3 s_\varepsilon s_3 s_\varepsilon$.
\end{itemize}
\item[(4)] If $Q$ contains a subquiver $C'_d$, see Figure \ref{Type IV}, then the following statements are equivalent:
\begin{itemize}\label{conditions 12}
  \item[(a)] $s_{i_1} P(s_{c'}, s_\varepsilon)P(s_\varepsilon,s_{c'})s_{i_1}= s_{i_2}\cdots s_{i_d} P(s_{c'}, s_\varepsilon)P(s_\varepsilon,s_{c'})s_{i_d}\cdots s_{i_2}$;
	\item[(b)] $s_{i_{a-1}}\cdots s_{i_1} P(s_{c'}, s_\varepsilon) P(s_\varepsilon,s_{c'}) s_{i_1} \cdots  s_{i_{a-1}}=s_{i_a} \cdots s_{i_d} P(s_{c'}, s_\varepsilon) P(s_\varepsilon,s_{c'}) s_{i_d} \cdots$ $s_{i_a}$ for any $a =3,\cdots,d$. 
\end{itemize}
\end{itemize}
\begin{figure}[H]
\begin{align*}
\xymatrix{
& \overset{2}{\circ} \ar@{->}[dr] & \\
\underset{1}{\circ} \ar@{->}[ur] & \ar @{} [u] |{C'_3} & \underset{\varepsilon}{\bullet} \ar@{->}[ll]}  \qquad    
\xymatrix{
& \overset{2}{\circ} \ar@{->}[dr] & \\
\underset{1}{\circ} \ar@{->}[ur]^{2} & \ar @{} [u] |{C^{''}_3} & \underset{\varepsilon}{\bullet} \ar@{->}[ll]^{2}} \qquad  
\xymatrix{
\overset{2}{\circ} \ar@{->}[rr] & &  \overset{3}{\circ} \ar@{->}[d] \\
\underset{1}{\circ} \ar@{->}[u] & \ar @{} [u] |{C'_4} & \underset{\varepsilon}{\bullet} \ar@{->}[ll]} 
\end{align*}
\caption{A chordless 3-cycle $C'_3$, a chordless 3-cycle $C''_3$, and a chordless 4-cycle $C'_4$. (see Lemma \ref{a chordless cycle lemma in M})}\label{lemma 5.2}
\end{figure}
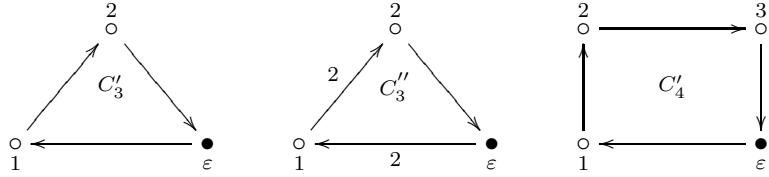
\end{lemma}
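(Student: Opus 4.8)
The plan is to treat all the displayed equations as words in the monoid $M(Q)$ presented by (R1) and (R2), and to establish each assertion by explicit rewriting. The tools available are exactly: the involutions $s_i^2=e$ and the single idempotent $s_\varepsilon^2=s_\varepsilon$ from (R1); the braid relations coming from (R2), whose type is dictated by the edge weights in each cycle (a weight-$1$ edge between $\varepsilon$ and $j$ gives the length-three mixed relation $s_\varepsilon s_j s_\varepsilon = s_j s_\varepsilon s_j s_\varepsilon = s_\varepsilon s_j s_\varepsilon s_j$, while a weight-$2$ edge gives the absorption $s_\varepsilon s_j = s_j s_\varepsilon = s_\varepsilon$); and the fact that $M(Q)$ is an inverse monoid with $s_i^{-1}=s_i$ and $s_\varepsilon^{-1}=s_\varepsilon$, so that every relation may be read backwards by taking inverses of both sides. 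I would treat each part in turn, reducing it to a short chain of such moves.

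For parts (1) and (3) I would first prove the equivalence of (a) and (b). Both relations assert that $s_\varepsilon$ commutes with a palindromic reflection word $r$ ($r=s_1s_2s_1$ for $C'_3$, $r=s_1s_2s_3s_2s_1$ for $C'_4$), and (b) is obtained from (a) by conjugating with the appropriate mutable generators and collapsing via $s_i^2=e$ together with the Coxeter braid relation $s_1s_2s_1=s_2s_1s_2$ (the length-four braid $s_is_js_is_j=s_js_is_js_i$ in the weighted case); running the same moves backwards supplies the converse. Granting (a) or (b), I would then obtain the equivalence of (c) and (d) by \emph{transport across the cycle}: the $\varepsilon$-braid relation at one of the two vertices adjacent to $\varepsilon$ is conjugated by the intervening reflection word and simplified using the commutation $s_\varepsilon r = r s_\varepsilon$ furnished by (a)/(b), producing the $\varepsilon$-braid relation at the other vertex, and the reverse conjugation gives the opposite implication.

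Part (2) is the weight-$2$ analogue of (1)(b), but now it holds unconditionally: the edge $\varepsilon$--$1$ forces the absorption $s_1s_\varepsilon=s_\varepsilon s_1=s_\varepsilon$. Multiplying the desired identity $s_1s_2s_\varepsilon s_2 = s_2s_\varepsilon s_2 s_1$ on both sides by $s_2$ and cancelling with $s_2^2=e$ reduces it to the commutation $s_\varepsilon\,(s_2s_1s_2)=(s_2s_1s_2)\,s_\varepsilon$ of $s_\varepsilon$ with the reflection $s_2s_1s_2$; this commutation I would then read off from the absorption relation (which lets $s_1$ pass through $s_\varepsilon$) together with the length-four braid $s_1s_2s_1s_2=s_2s_1s_2s_1$ and the length-three $s_2$--$\varepsilon$ braid. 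Part (4) is the cleanest: writing $P=P(s_{c'},s_\varepsilon)$ and $P'=P(s_\varepsilon,s_{c'})$, statement (a) is exactly the case $a=2$ of (b), and conjugating the relation for a given $a$ by $s_{i_a}$ turns its right-hand side from $s_{i_a}\cdots s_{i_d}PP's_{i_d}\cdots s_{i_a}$ into $s_{i_{a+1}}\cdots s_{i_d}PP's_{i_d}\cdots s_{i_{a+1}}$ via $s_{i_a}^2=e$, yielding the relation for $a+1$; an induction on $a$ from the base case (a) gives (b), while conjugating the $a=3$ instance of (b) by $s_{i_2}$ recovers (a), so (a) and (b) are equivalent.

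The main obstacle, and the step I would watch most carefully, is that $s_\varepsilon$ is a genuine idempotent rather than an involution, so it can never be cancelled: every rewrite must preserve the single occurrence of $s_\varepsilon$ and only rearrange the invertible letters around it. This is precisely what makes the transport argument for (c) $\Leftrightarrow$ (d) in parts (1) and (3) delicate, since one must move the conjugating reflection past $s_\varepsilon$ using only the cycle relation (a)/(b) and the one $\varepsilon$-braid relation being assumed, without inadvertently invoking the relation one is trying to derive.
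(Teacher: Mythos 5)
Your arguments for parts (1), (3) and (4) are correct and are essentially the paper's own proof: the equivalence of (a) and (b) is obtained by conjugating by $s_2$ (resp.\ by $s_2s_3$) and invoking the braid relations, the equivalence of (c) and (d) by conjugating by the palindromic word $s_1s_2s_1$ (resp.\ $s_1s_2s_3s_2s_1s_2$) and using the commutation furnished by (a)/(b) to carry that word past $s_\varepsilon$, and part (4) by exactly your induction via conjugation by $s_{i_a}$ (which the paper dismisses with ``using (R1), it is obvious'').

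Part (2), however, contains a genuine and unfixable gap. Your reduction of $s_1s_2s_\varepsilon s_2=s_2s_\varepsilon s_2s_1$ to the commutation $s_\varepsilon(s_2s_1s_2)=(s_2s_1s_2)s_\varepsilon$ is valid and reversible, but that commutation does \emph{not} follow from (R1) and (R2), so the final step (``reading it off'' from absorption, the length-four braid, and the $s_2$--$\varepsilon$ braid) cannot be carried out by any rewriting. Concretely, in the symmetric inverse monoid on $\{1,2,3,4\}$ set $s_1=(2\,3)$, $s_2=(1\,2)(3\,4)$, $s_\varepsilon=\mathrm{id}_{\{1\}}$. Every relation of (R1)--(R2) for $C''_3$ holds: $s_1^2=s_2^2=e$, $s_\varepsilon^2=s_\varepsilon$; $s_1s_2$ is a $4$-cycle, so $(s_1s_2)^4=e$; $s_1$ fixes $1$, so $s_1s_\varepsilon=s_\varepsilon s_1=s_\varepsilon$; and $s_\varepsilon s_2 s_\varepsilon$, $s_2s_\varepsilon s_2 s_\varepsilon$, $s_\varepsilon s_2 s_\varepsilon s_2$ are all the empty map because $s_2(1)=2\notin\{1\}$. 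Yet $s_2s_1s_2=(1\,4)$, which does not commute with $\mathrm{id}_{\{1\}}$; correspondingly the two sides of (2) are the distinct rank-one partial maps $2\mapsto 3$ and $3\mapsto 2$. Since this is an inverse monoid whose generators satisfy (R1)--(R2), the inverse monoid presented by (R1)--(R2) admits a homomorphism onto it, so the identity in (2) is not a consequence of those relations. You are in good company here: the paper's own proof silently treats only (1), (3) and (4) and never proves (2); as stated, (2) is not a redundancy at all, but precisely the cycle relation (R3)(iii) for $C''_3$, which has to be \emph{imposed} (as Definition \ref{definition relation} does, and as the paper in fact uses it, e.g.\ in case $(e')$ of Section \ref{the proof of main theorem}), rather than derived. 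Fortunately, no other result in the paper invokes part (2).
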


\begin{proof}
For (1), the equivalence of (a) and (b) follows from:
\begin{align*}
s_1s_2s_\varepsilon s_2=s_2(s_1 s_2s_1 s_\varepsilon) s_2, \ s_2 s_\varepsilon s_2 s_1=s_2(s_\varepsilon s_1 s_2s_1) s_2,
\end{align*}
using (R2). Suppose that (a) and (b) hold. Then by (R1), (R2), (a), and (b), the equivalence of $(c)$ and $(d)$ follows from:
\begin{align*}
s_1 s_2 s_1 (s_\varepsilon s_1 s_\varepsilon) s_1 s_2 s_1 & = s_\varepsilon s_1 s_2 s_1 s_1 s_1 s_2 s_1 s_\varepsilon = s_\varepsilon s_2 s_\varepsilon, \\
s_1 s_2 s_1 (s_\varepsilon s_1 s_\varepsilon s_1) s_1 s_2 s_1 & = s_\varepsilon (s_1 s_2 s_\varepsilon s_2) s_1 = s_\varepsilon s_2 s_\varepsilon s_2, \\
s_1 s_2 s_1 (s_1 s_\varepsilon s_1 s_\varepsilon) s_1 s_2 s_1 & = s_1 (s_2 s_\varepsilon s_2 s_1) s_\varepsilon = s_2 s_\varepsilon s_2 s_\varepsilon.
\end{align*}

For (3), the equivalence of (a) and (b) follows from:
\begin{align*}
s_1 s_2 s_3 s_\varepsilon s_3 s_2 & = s_2 s_3 (s_1 s_2 s_3 s_2 s_1 s_\varepsilon) s_3 s_2, \ s_2 s_3 s_\varepsilon s_3 s_2 s_1=s_2 s_3 (s_\varepsilon s_1 s_2 s_3 s_2 s_1) s_3 s_2,
\end{align*}
using first $s_1s_2s_3s_2s_1=s_3s_2s_1s_2s_3$ and then (R1). Suppose that (a) and (b) hold. Using first $s_2s_\varepsilon=s_\varepsilon s_2$ and then (a), we have:
\begin{align*}
s_1 s_2 s_3 s_2 s_1 s_2 (s_\varepsilon s_1 s_\varepsilon) s_2 s_1 s_2 s_3 s_2 s_1 & = (s_1 s_2 s_3 s_2 s_1 s_\varepsilon) s_2 s_1 s_2 (s_\varepsilon s_1 s_2 s_3 s_2 s_1) \\
& = (s_\varepsilon s_1 s_2 s_3 s_2 s_1) s_2 s_1 s_2 (s_1 s_2 s_3 s_2 s_1 s_\varepsilon) \\
& = s_\varepsilon s_1 s_2 s_3 s_2 s_3 s_2 s_1 s_\varepsilon \quad \text{(by (R2))} \\
& = s_\varepsilon s_3 s_\varepsilon,
\end{align*}
where in the last equation we used that $s_1$ and $s_3$ commute. Using (R1), (R2), (a), and (b), by a similar argument, we have
\begin{align*}
s_1 s_2 s_3 s_2 s_1 s_2 (s_1 s_\varepsilon s_1 s_\varepsilon) s_2 s_1 s_2 s_3 s_2 s_1 & =  s_3 s_\varepsilon s_3 s_\varepsilon. \\
s_1 s_2 s_3 s_2 s_1 s_2 (s_\varepsilon s_1 s_\varepsilon s_1) s_2 s_1 s_2 s_3 s_2 s_1 & = s_\varepsilon s_3 s_\varepsilon s_3.
\end{align*}
Therefore $(c)$ and $(d)$ are equivalent.

For (4), using $(R1)$, it is obvious.
\end{proof}

At an end of this section, we show that $M(Q)$ could be defined using only the underlying unoriented weighted diagram of $Q$, by taking relations (R1)--(R4) corresponding to both $Q$ and $Q^{op}$ as the defining relations. Our result can be viewed as a generalization of Proposition 4.6 of \cite{BM15}.

\begin{proposition}\label{without orient}
Let $M(\Phi,\mathcal{B})$ be a Boolean reflection monoid with generators $s_i$, $i\in I \cup \{ \varepsilon \}$. Then the generators satisfy (R1)--(R4) with respect to $Q$ if and only if they satisfy (R1)--(R4) with respect to ${Q}^{op}$.
\end{proposition}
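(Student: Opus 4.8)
The plan is to show that the defining relations (R1)--(R4) are symmetric under reversing all arrows, i.e. that reversing orientation either leaves each relation unchanged or produces a relation provably equivalent to it. The key observation is that the matrix data $(m_{ij})$, $(m_{\varepsilon j})$, $(m_{j\varepsilon})$ depend only on the \emph{weights} of edges, not their orientation, so the ``length'' relations (R1) and (R2) are literally identical for $Q$ and $Q^{op}$; there is nothing to prove for these. The work is therefore concentrated entirely in the cycle relations (R3) and the path relations (R4), where the orientation of the arrows \emph{does} enter (through which vertex is the ``source'' $i_0$ or $\varepsilon$ and in what cyclic order the vertices are traversed).

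For (R3) I would argue cycle-type by cycle-type. A chordless cycle $C$ in $Q$ becomes the same underlying cycle traversed in the reverse direction in $Q^{op}$; since the relations in (R3)(i)--(iii) are palindromic in the interior letters $s_{i_1}\cdots s_{i_{d-1}}\cdots s_{i_1}$, reversing the cyclic order sends the relation of $Q$ to the relation of $Q^{op}$ \emph{after} conjugating by the appropriate prefix, and the equivalence is exactly the content of Lemma~\ref{a chordless cycle lemma in M}. Concretely, for the 3-cycle $C'_3$ of Figure~\ref{lemma 5.2}, reversing all arrows interchanges the role of the two ends and turns statement (a) of Lemma~\ref{a chordless cycle lemma in M}(1) into statement (b); Lemma~\ref{a chordless cycle lemma in M}(1) precisely asserts these are equivalent modulo (R1), (R2). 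The same lemma, parts (2)--(3), handles $C''_3$ and $C'_4$, and part (4) handles the long central cycle $C'_d$ of the Type~IV diagrams: the equivalence of conditions (a) and (b) there is exactly the statement that one may ``rotate the base point'' of the path relation, which is what orientation reversal does. For (R4)(i) in type $B$, reversing orientation swaps the two equalities $P(s_0,s_\varepsilon)=P(s_1,s_\varepsilon)$ and $P(s_\varepsilon,s_0)=P(s_\varepsilon,s_1)$, and these are interchanged by applying the inverse anti-automorphism (equivalently by reading the relation backwards), using $s_i^2=e$; for (R4)(ii) in type $D$ the relation $P(s_a,s_\varepsilon)P(s_\varepsilon,s_a)=P(s_b,s_\varepsilon)P(s_\varepsilon,s_b)$ is already manifestly invariant under reversal since each factor is sent to its partner of the same symmetric form.

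The main obstacle, and the step requiring genuine care rather than routine symmetry, is verifying that reversing orientation does not change \emph{which} cycles are chordless and \emph{which} weight pattern triggers each clause of (R3)(i)--(iii). Here I would invoke Proposition~\ref{FZ oriented cycle}: in a quiver of finite type every chordless cycle is cyclically oriented and is one of the finitely many types of Figure~\ref{the oriented cycle in finite type}, and this list is stable under reversing all arrows. Hence $Q^{op}$ has exactly the same chordless cycles as $Q$ (with reversed orientation), and the weight conditions ``all weights $1$'' or ``$\omega_0=2$'' in (R3)(i), and the specific patterns $w_1=1,w_2=2$ versus $w_1=2,w_2=1$ in (R3)(iii), get carried to one another consistently. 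Once this structural stability is in hand, the proof reduces to the short palindrome-and-conjugation computations above, each justified by the relevant part of Lemma~\ref{a chordless cycle lemma in M}; I would present these as a brief case check rather than spelling out every product, since the nontrivial equivalences have already been isolated in that lemma.
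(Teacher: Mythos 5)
Your skeleton agrees with the paper's: (R1)--(R2) are orientation-blind, the chordless-cycle structure is stable under reversal (Proposition~\ref{FZ oriented cycle}), and the remaining work is a case check over cycle types. But two of your central reductions point at the wrong statements, and one of them is a genuine gap. Lemma~\ref{a chordless cycle lemma in M} concerns only cycles through the frozen vertex $\varepsilon$ (namely $C'_3$, $C''_3$, $C'_4$, $C'_d$), so it cannot justify orientation-invariance of (R3)(i), the relations attached to chordless cycles of mutable vertices. The paper disposes of those by citing Proposition 4.6 of \cite{BM15}, and this is not a formality: for the weighted cycles (the triangle and square with weight-$2$ edges in Figure~\ref{the oriented cycle in finite type}) the relation $(s_{i_0}s_{i_1}\cdots s_{i_{d-1}}\cdots s_{i_1})^2=e$ is an \emph{extra} relation beyond (R1)--(R2), reversal moves the weight-$2$ edge so that the admissible base point changes, and transporting the relation to the reversed cycle is exactly the nontrivial content of Barot--Marsh's Lemmas 4.1--4.2 and Proposition 4.6. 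Your palindrome-plus-conjugation remark only covers the cycles with all weights $1$; as written, your proposal has no argument for the weighted ones.

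Your identification for the $\varepsilon$-cycles is also off. For $C'_3$ the (R3)(ii) relation of $Q$ is (a): $s_\varepsilon s_1s_2s_1=s_1s_2s_1s_\varepsilon$, while the relation of $Q^{op}$ is $s_\varepsilon s_2s_1s_2=s_2s_1s_2s_\varepsilon$, i.e.\ (a) with the labels $1,2$ exchanged; it is \emph{not} statement (b) of Lemma~\ref{a chordless cycle lemma in M}(1). Statement (b), $s_1s_2s_\varepsilon s_2=s_2s_\varepsilon s_2s_1$, is the same cycle relation rebased at the mutable vertex $1$; base-point rotation and orientation reversal are different operations. The equivalence you actually need is supplied by the braid identity $s_1s_2s_1=s_2s_1s_2$ (and $s_1s_2s_3s_2s_1=s_3s_2s_1s_2s_3$ for $C'_4$), which is precisely the paper's one-line computations in its Cases 1 and 3; likewise for $C''_3$ the reversed relation $s_\varepsilon s_2s_1s_2=s_2s_1s_2s_\varepsilon$ is not an instance of Lemma~\ref{a chordless cycle lemma in M}(2) but requires the short conjugation computation of the paper's Case 2. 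Two smaller slips: the (R4) relations of type $B$ and of Types I--III are literally identical for $Q$ and $Q^{op}$, since shortest paths ignore orientation, so nothing gets ``swapped''; and for Type IV reversal \emph{inverts} the cycle word $s_{i_1}\cdots s_{i_d}$ rather than rotating its base point --- the conclusion survives because commuting with $w$ is equivalent to commuting with $w^{-1}$, an (R1)-level fact in the spirit of Lemma~\ref{a chordless cycle lemma in M}(4). So the strategy is salvageable, but the key identifications are incorrect and the (R3)(i) case is left unproved.
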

\begin{proof}
We assume that generators $s_i$, $i\in I \cup \{ \varepsilon \}$ satisfy relations (R1)--(R4) with respect to $Q$, and show that these generators satisfy relations (R1)--(R4) with respect to $Q^{op}$. The converse follows by replacing $Q$ with $Q^{op}$. Since (R1) and (R2) do not depend on the orientation of $Q$, generators $s_i$, $i\in I \cup \{ \varepsilon \}$ satisfy relation (R1) and (R2) with respect to ${Q}^{op}$. The cases of chordless cycles appearing in quivers of finite type have been proved in Proposition 4.6 of \cite{BM15}. The remaining needed to check the cases are $C'_3$, $C''_3$, $C'_4$ shown in Figure \ref{lemma 5.2} and $C'_d$ shown in Figure \ref{Type IV}.

{\bf Case 1.} In $C'_3$, we have
\begin{align*}
s_\varepsilon s_2 s_1 s_2 & = s_\varepsilon s_1 s_2 s_1 = s_1 s_2 s_1 s_\varepsilon = s_2 s_1 s_2 s_\varepsilon, \\
s_2 s_1 s_\varepsilon s_1 & = s_1 s_2 (s_1 s_2 s_\varepsilon s_2) s_2 s_1  = s_1 s_2 (s_2 s_\varepsilon s_2 s_1) s_2 s_1 = s_1 s_\varepsilon s_1 s_2.
\end{align*}

{\bf Case 2.} In $C''_3$, we have
\begin{align*}
s_\varepsilon s_2 s_1 s_2 = s_2 s_1 (s_1 s_2 s_\varepsilon s_2) s_1 s_2 = s_2 s_1 (s_2 s_\varepsilon s_2 s_1)s_1 s_2 = s_2 s_1 s_2 s_\varepsilon.
\end{align*}

{\bf Case 3.} In $C'_4$, note that $s_1s_2s_3s_2s_1=s_3s_2s_1s_2s_3$. We have
\begin{align*}
s_\varepsilon s_3 s_2 s_1 s_2 s_3 & = s_\varepsilon s_1 s_2 s_3 s_2 s_1 = s_1 s_2 s_3 s_2 s_1 s_\varepsilon = s_3 s_2 s_1 s_2 s_3 s_\varepsilon,\\
s_3 s_2 s_1 s_\varepsilon s_1 s_2 & = s_2 s_1 s_3 s_2 (s_1 s_2 s_3 s_\varepsilon s_3 s_2) s_2 s_3 s_1 s_2 = s_2 s_1 s_3 s_2 (s_2 s_3 s_\varepsilon s_3 s_2 s_1) s_2 s_3 s_1 s_2 \\
&=s_2 s_1 s_\varepsilon s_1 s_2 s_3.
\end{align*}

{\bf Case 4.} In $C'_d$, it follows from Lemma \ref{a chordless cycle lemma in M} (4) that (R4) does not depend on the orientation of chordless cycles in $C'_d$.

Since every chordless cylce in $Q^{op}$ corresponds to a chordless cycle in $Q$, the result holds.
\end{proof}

\section{The proof of Theorem \ref{mutation-invariance}}\label{the proof of main theorem}
In this section, we give the proof of Theorem \ref{mutation-invariance}.

Let $\Delta$ be one of $A^\varepsilon_{n-1}$, $B^\varepsilon_{n}$, and $D^\varepsilon_{n}$ in Table \ref{initial ABD}. We fix a $\Delta$ quiver $Q$. Let $Q'=\mu_{k}(Q)$ be the mutation of $Q$ at vertex $k$, where $k \in I$. Throughout the section, we use $s_i$ and $r_i$ for $i\in I \cup\{\varepsilon\}$ to denote generators of $M(Q)$ and $M(Q')$ respectively. Similar to \cite{BM15}, we define elements $t_i$, $i \in I$, and $t_{\varepsilon}$ in $M(Q)$ as follows:
\begin{align}\label{define variables}
\begin{split}
&t_i=
\begin{cases}
s_{k}s_{i}s_{k} & \text{if there is an arrow $i \to k$ in $Q$ (possibly weighted)}, \\
s_i & \text{otherwise},
\end{cases}\\
&t_{\varepsilon}=
\begin{cases}
s_{k}s_{\varepsilon} s_{k} & \text{if there is an arrow $\varepsilon \to k$ in $Q$ (possibly weighted)}, \\
s_{\varepsilon} & \text{otherwise}.
\end{cases}
\end{split}
\end{align}
Then $t^{2}_i=e$ for $i\in I$ and $t_{\varepsilon}^2=t_{\varepsilon}$. In order to prove Theorem \ref{mutation-invariance}, we need the following proposition, which we will prove it in Section \ref{proof of 7.1}.


\begin{proposition}\label{inverse monoids homomorphism}
For each $i\in I\cup \{\varepsilon\}$, the map
\begin{align*}
\Phi:M(Q') & \longrightarrow M(Q) \\
r_i & \longmapsto t_i,
\end{align*}
is an inverse monoid homomorphism.
\end{proposition}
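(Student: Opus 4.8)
The plan is to use the universal property of the presentation of $M(Q')$: since $M(Q')$ is the inverse monoid generated by the $r_i$, $i\in I\cup\{\varepsilon\}$, subject to the relations (R1)--(R4) computed from $Q'$, and $M(Q)$ is itself an inverse monoid, the assignment $r_i\mapsto t_i$ extends to an inverse monoid homomorphism precisely when the elements $t_i\in M(Q)$ satisfy, inside $M(Q)$, all of the relations (R1)--(R4) written down for the mutated quiver $Q'$. (A monoid homomorphism between inverse monoids automatically respects the unary inverse, by uniqueness of inverses, so it is enough to produce the homomorphism at the monoid level.) Thus the whole proof reduces to checking these relations one family at a time, where at each stage we may freely use the relations (R1)--(R4) satisfied by the $s_i$ in $M(Q)$ together with the defining identities $t_i=s_ks_is_k$ or $t_i=s_i$ (and likewise for $t_\varepsilon$) from (\ref{define variables}).

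First I would dispatch (R1): the identities $t_i^2=e$ for $i\in I$ and $t_\varepsilon^2=t_\varepsilon$ have already been recorded, and in the conjugated case one has $t_\varepsilon^2=s_ks_\varepsilon s_ks_ks_\varepsilon s_k=s_ks_\varepsilon^2 s_k=s_ks_\varepsilon s_k=t_\varepsilon$ using $s_k^2=e$ and $s_\varepsilon^2=s_\varepsilon$. Next come the Coxeter-type relations (R2). For a pair $i,j\in I$ not involving the frozen vertex, the required identity $(t_it_j)^{m'_{ij}}=e$, with $m'_{ij}$ read off from $Q'$, is exactly the computation of Barot and Marsh, the admissible local transformations being governed by Lemma \ref{three vertices subdiagram mutation corollary} (the cases (a)--(f) of Figure \ref{three vertices subdiagram mutation 1}). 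For the new pairs involving $\varepsilon$ one must instead verify the inverse-monoid relation $(t_\varepsilon t_j\cdots)_{m'_{\varepsilon j}}=(t_jt_\varepsilon\cdots)_{m'_{j\varepsilon}}=(t_\varepsilon t_j\cdots)_{m'_{\varepsilon j}+1}$; here the local pictures (a$'$)--(g$'$) of Figure \ref{three vertices subdiagram mutation 1} record exactly how the weights $m'_{\varepsilon j}$ and $m'_{j\varepsilon}$ arise, and the identity is obtained by substituting $t_\varepsilon=s_ks_\varepsilon s_k$ (or $t_\varepsilon=s_\varepsilon$) and reducing via the $Q$-relations (R2) and (R3)(ii),(iii).

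The bulk of the work lies in the cycle and path relations (R3) and (R4). The crucial structural input is Lemma \ref{diagrams with frozen}: every chordless cycle $C'$ of $Q'$ arises from one of the finitely many induced subquivers of $Q$ listed in Figures \ref{Induced subquivers and the corresponding chordless cycles 1} and \ref{the corresponding chordless cycles 2}, separated according to whether $C'$ avoids or contains $\varepsilon$. For each case I would substitute the definitions of the $t_i$ and reduce the candidate relation to one already valid in $M(Q)$. The cycles $C'$ not meeting $\varepsilon$ reproduce Barot and Marsh's argument verbatim, since the $t_i$ with $i\in I$ are involutions satisfying the same relations as their reflections, so the genuinely new verifications are those for cycles through $\varepsilon$, where the reductions of Lemma \ref{a chordless cycle lemma in M} (for $C'_3$, $C''_3$, $C'_4$, and the long central cycle $C'_d$) supply the equivalences between the various forms of the cycle and path relations. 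I would also invoke Proposition \ref{without orient} to halve the number of orientation cases, as it guarantees that the relations attached to a chordless cycle do not depend on its orientation.

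The main obstacle, as I see it, is precisely the frozen-vertex relations (R3)(ii), (R3)(iii) and (R4): these have no counterpart in the group setting of Barot and Marsh, so the proof must show that under a single mutation $\mu_k$ the idempotent $s_\varepsilon$ (or its conjugate $s_ks_\varepsilon s_k$) continues to commute with, or braid with, the appropriate words, and that the longer path relations of types $B$ and $D$ survive. The organizing principle that keeps the case analysis tractable is that each local picture in Figures \ref{three vertices subdiagram mutation 1}, \ref{Induced subquivers and the corresponding chordless cycles 1} and \ref{the corresponding chordless cycles 2} is a \emph{full} induced subquiver, so $\mu_k$ alters only finitely many generators at once and every verification is a bounded word identity that can be closed using (R1)--(R4) for $Q$ and the equivalences of Lemma \ref{a chordless cycle lemma in M}.
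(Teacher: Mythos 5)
Your overall strategy coincides with the paper's: by the universal property of the presentation, $r_i\mapsto t_i$ extends to a homomorphism once the elements $t_i$ defined in (\ref{define variables}) are shown to satisfy, inside $M(Q)$, the relations (R1)--(R4) computed from $Q'$. Your dispatch of (R1), your split of (R2) into the Barot--Marsh pairs in $I$ versus the frozen pairs checked over the local pictures (a$'$)--(g$'$) of Figure \ref{three vertices subdiagram mutation 1}, and your organization of (R3) by the classification of chordless cycles of $Q'$ in Lemma \ref{diagrams with frozen}, with Lemma \ref{a chordless cycle lemma in M} supplying the equivalences for cycles through $\varepsilon$, is exactly how the paper proceeds (its Lemma \ref{proof of (R2)} and the case checks (a$'$)--(l$'$)). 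Your appeal to Proposition \ref{without orient} to cut orientation cases is harmless, though the paper reserves that proposition for building the inverse map $\Theta$ in the proof of Theorem \ref{mutation-invariance}.

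There is, however, one genuine gap, and it sits precisely where you locate the ``main obstacle'': the path relations (R4). Your organizing principle --- that every verification is a bounded word identity inside a full induced subquiver that $\mu_k$ alters only locally --- fails for (R4), because the words $P(s_c,s_\varepsilon)$ run along shortest paths of unbounded length, and the mutation vertex $k$ may lie anywhere along that path (deep inside the tail $Q'\in\mathcal{U}^{A^\varepsilon}$ of Figures \ref{Type I}--\ref{Type IV}), far from the fork or central cycle to which the relation refers. Neither Lemma \ref{a chordless cycle lemma in M}(4) nor locality controls how $P(s_c,s_\varepsilon)$ itself deforms under such a mutation. The paper closes this with a dedicated statement, Lemma \ref{lemma of mutations sequence}: for a quiver in $\mathcal{U}(A^\varepsilon_\ell)$, mutation at any vertex changes the shortest-path words only by optional factors of $s_k$ on either side, so that
\begin{align*}
P(t_{c'},t_{\varepsilon'})P(t_{\varepsilon'},t_{c'})=P(s_c,s_\varepsilon)P(s_\varepsilon,s_c)
\quad\text{or}\quad
s_kP(s_c,s_\varepsilon)P(s_\varepsilon,s_c)s_k.
\end{align*}
Only with this in hand does (R4) reduce to the finitely many configurations (mutation at $a$, $b$, $c$, $d$, or at a vertex of the central cycle) that are then checked by direct computation. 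Without this lemma, or an equivalent statement proved by your own means, your plan does not close the (R4) case, and hence does not yield the homomorphism for the $B^\varepsilon_n$ and $D^\varepsilon_n$ quivers.
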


\subsection{Proof of Theorem \ref{mutation-invariance}}
For each vertex $i\in I \cup \{\varepsilon\}$ of $Q$ define the elements $t'_i$ in $M(Q')$ as follows:
\begin{align*}
\begin{split}
& t'_i =
\begin{cases}
r_{k}r_{i}r_{k} & \text{if there is an arrow $k \to i$ in $Q'$ (possibly weighted)}, \\
r_i & \text{otherwise},
\end{cases}\\
& t'_\varepsilon =
\begin{cases}
r_{k}r_{\varepsilon} r_{k} & \text{if there is an arrow $k \to \varepsilon$ in $Q'$ (possibly weighted)}, \\
r_{\varepsilon} & \text{otherwise}.
\end{cases}
\end{split}
\end{align*}
We claim that these elements $t'_i$, for each vertex $i\in I \cup \{\varepsilon\}$, satisfy the relations $(R1)$--$(R4)$ defining $M(Q)$. This follows from Proposition \ref{inverse monoids homomorphism} by interchanging $Q$ and $Q'$ and using the fact that the definition of $M(Q)$ is unchanged under reversing the orientation of all the arrows in $Q$ (see Proposition \ref{without orient}). Therefore there is an inverse monoid homomorphism $\Theta:M(Q)\to M(Q')$ such that $\Theta(s_i)=t'_i$ for each $i$.

If there is no arrow $i \to k$ in $Q$, then there is also no arrow $k \to i$ in $Q'$ and consequently $\Theta\circ\Phi(r_i)=\Theta(s_i)=r_i$. If there is an arrow $i \to k$ in $Q$, then there is an arrow $k \to i$ in $Q'$ and therefore $\Theta\circ\Phi(r_i)=\Theta(s_ks_is_k)=\Theta(s_k)\Theta(s_i)\Theta(s_k)=r_k(r_{k}r_{i}r_{k})r_k=r_i$.
So $\Theta\circ\Phi=id_{M(Q')}$, and, similarly, $\Phi\circ\Theta=id_{M(Q)}$, and hence $\Theta$ and $\Phi$ are isomorphisms.

\subsection{The proof of Proposition \ref{inverse monoids homomorphism}}\label{proof of 7.1}
We will prove Proposition \ref{inverse monoids homomorphism} by showing that the elements $t_i, i \in I\cup  \{\varepsilon \}$ satisfy the (R1)--(R4) relations in $M(Q')$. We denote by $m_{ij}'$ the value of $m_{ij}$ for $Q'$. (R1) is obvious. In the sequel, the proof that the elements $t_i, i \in I\cup  \{\varepsilon \}$, satisfy (R2) in $M(Q')$ follows from Lemma \ref{proof of (R2)} and the rest of proof is completed case by case.

\begin{lemma}\label{proof of (R2)}
The elements $t_i$, for $i$ a vertex of $Q$, satisfy the following relations.
\begin{enumerate}
\item If $i=k$ or $j=k$ and $i, j \neq \varepsilon$, then $(t_it_j)^{m_{ij}'}=e$.
\item If at most one of $i,j$ is connected to $k$ in $Q$ and $i, j \neq \varepsilon$, then $(t_it_j)^{m_{ij}'}=e$.
\item Let $i$ be in $I$. Then
\begin{align*}
\begin{cases}
t_i t_\varepsilon = t_\varepsilon t_i & \text{if $\varepsilon, i$ are not connected in $Q'$,}\\
t_\varepsilon t_i t_\varepsilon = t_\varepsilon t_i t_\varepsilon t_i = t_i t_\varepsilon t_i t_\varepsilon & \text{if $\varepsilon, i$ are connected by an edge with weight $1$ in $Q'$,} \\
t_\varepsilon = t_i t_\varepsilon= t_\varepsilon t_i & \text{if $\varepsilon, i$ are connected by an edge with weight $2$ in $Q'$.} \\
\end{cases}
\end{align*}
\end{enumerate}
\end{lemma}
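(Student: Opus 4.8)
The plan is to verify the three families of relations separately, dispatching the mutable-vertex relations (1)--(2) by reduction to Barot and Marsh's argument and reserving the genuinely new work for the frozen-vertex relations (3). The guiding principle throughout is that mutation at $k$ only alters edges lying on a two-edge path through $k$, so the entire verification is a finite case analysis organized by the connectivity of the vertices involved to $k$, with the local configurations enumerated by Lemma \ref{three vertices subdiagram mutation corollary} and Lemma \ref{diagrams with frozen}.

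For (1) and (2) only the generators $s_i$, $i\in I$, occur, and the definition of $t_i$ in \eqref{define variables} coincides with the one used in \cite{BM15}. Since the instances of (R1) and (R2) indexed by $I$ are precisely the Coxeter relations of the Weyl group attached to $\Delta\setminus\{\varepsilon\}$, the submonoid generated by $\{s_i:i\in I\}$ satisfies those relations, so (1)--(2) are exactly the statements proved in Barot and Marsh's Lemmas 4.1 and 4.2, whose case analysis is indexed by the local pictures (a)--(f) of Figure \ref{three vertices subdiagram mutation 1}. For (2) I would make this self-contained: if $j$ is not connected to $k$ then $m_{jk}=2$, so $s_k$ and $s_j$ commute, the edge between $i$ and $j$ is unchanged (hence $m'_{ij}=m_{ij}$), and writing $t_i=s_ks_is_k$, $t_j=s_j$ gives $(t_it_j)^{m'_{ij}}=(s_ks_is_js_k)^{m_{ij}}=s_k(s_is_j)^{m_{ij}}s_k=e$ by telescoping $s_k^2=e$. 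For (1), each local picture fixes $m'_{kj}$ and the computation is again the Barot--Marsh one.

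The substance is in (3), the relations between $t_\varepsilon$ and $t_i$. I would split on whether $\varepsilon$ and $i$ are connected to $k$, reading off the admissible local configurations from the primed cases (a')--(g') of Figure \ref{three vertices subdiagram mutation 1} and the chordless cycles through $\varepsilon$ in Figure \ref{the corresponding chordless cycles 2}. When neither $\varepsilon$ nor $i$ is connected to $k$, one has $t_\varepsilon=s_\varepsilon$, $t_i=s_i$ and the $\varepsilon$--$i$ edge is unaffected, so the relation is inherited verbatim from $M(Q)$. When exactly one of them is connected to $k$, the other commutes with $s_k$ (a non-neighbour of $k$ gives a weight-$2$ entry in the Coxeter matrix), and the required relation is obtained by conjugating the corresponding relation of $M(Q)$ by $s_k$, exactly as in (2); here one must check all three target forms, namely commutation, the weight-$1$ relation $t_\varepsilon t_i t_\varepsilon=t_\varepsilon t_i t_\varepsilon t_i=t_i t_\varepsilon t_i t_\varepsilon$, and the absorbing relation $t_\varepsilon=t_it_\varepsilon=t_\varepsilon t_i$, but each follows from conjugation once the unchanged $\varepsilon$--$i$ weight is known.

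The hard part will be the cases where both $\varepsilon$ and $i$ are connected to $k$, since then the mutation creates, destroys, or re-weights the edge between $\varepsilon$ and $i$, so the target relation in $M(Q')$ is genuinely different from the relation holding in $M(Q)$ and the cycle data on $\{\varepsilon,i,k\}$ must be used. Here I would substitute $t_\varepsilon=s_ks_\varepsilon s_k$, $t_i=s_ks_is_k$ (or $s_i$) into the chordless-cycle relations (R3)(ii), (R3)(iii) and the path relations (R4) of $M(Q)$, and use the equivalences packaged in Lemma \ref{a chordless cycle lemma in M} to convert the cycle relation available in $Q$ into the one demanded in $Q'$. I expect the most delicate subcase to be the absorbing relation $t_\varepsilon=t_it_\varepsilon$ produced by a weight-$2$ edge $\varepsilon\to i$, precisely because $s_\varepsilon$ is an idempotent rather than an involution: deriving it should require combining the relevant instance of (R3)(iii) with $s_\varepsilon^2=s_\varepsilon$ and the weight-$2$ relation already present between $\varepsilon$ and $k$. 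Each individual computation should be short, so the real obstacle is the organizational one of matching every local picture of Figure \ref{the corresponding chordless cycles 2} to the correct cycle relation and target form, and confirming that the list of pictures is exhaustive.
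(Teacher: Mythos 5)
Your overall architecture coincides with the paper's: parts (1)--(2) are delegated to Barot--Marsh (the paper cites their Lemma 5.1), and part (3) is proved by a case analysis on how $\varepsilon$ and $i$ sit relative to $k$, with the disconnected and singly-connected cases handled by conjugation by $s_k$ and the doubly-connected cases by explicit computation inside $M(Q)$ using (R1)--(R4). However, two concrete points in your plan are genuine gaps. First, your enumeration for (3) --- ``split on whether $\varepsilon$ and $i$ are connected to $k$,'' reading configurations from (a$'$)--(g$'$) of Figure \ref{three vertices subdiagram mutation 1} --- omits the case $i=k$, which is a legitimate instance of (3) since $k\in I$. It needs its own (short but not vacuous) verification: if $\varepsilon\to k$ has weight $1$ in $Q$, then $t_\varepsilon=s_k s_\varepsilon s_k$, $t_k=s_k$, and the weight-$1$ relations in $Q'$ must be deduced from $s_\varepsilon s_k s_\varepsilon=s_\varepsilon s_k s_\varepsilon s_k=s_k s_\varepsilon s_k s_\varepsilon$; if the weight is $2$, one checks $t_kt_\varepsilon=t_\varepsilon t_k=t_\varepsilon$ from $s_\varepsilon s_k=s_k s_\varepsilon=s_\varepsilon$. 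The paper treats this case first.

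Second, the mechanism you propose for the subcase you yourself flag as most delicate is not available. A weight-$2$ edge between $\varepsilon$ and $i$ in $Q'$ is created exactly from the path configurations $i\overset{2}{\longrightarrow}k\longrightarrow\varepsilon$ and $\varepsilon\longrightarrow k\overset{2}{\longrightarrow}i$ in $Q$ (cases (e$'$)(i) and (f$'$)(i) of the paper's analysis). In those configurations $Q$ contains no oriented cycle through $\{i,k,\varepsilon\}$, so there is no instance of (R3)(iii) to invoke, and the $\varepsilon$--$k$ edge has weight $1$, so there is no ``weight-$2$ relation already present between $\varepsilon$ and $k$'' either. What actually closes these cases is the type-$B$ path relation (R4)(i) of $M(Q)$, namely $s_i s_k s_\varepsilon=s_k s_\varepsilon$ and $s_\varepsilon s_k s_i=s_\varepsilon s_k$, which gives immediately $t_i t_\varepsilon=(s_k s_i s_k)s_\varepsilon=s_k(s_k s_\varepsilon)=s_\varepsilon=t_\varepsilon$ and its mirror. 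You do list (R4) in your toolkit, so the repair is local, but the derivation you sketch would fail if carried out; (R3)(iii) is instead what handles the opposite direction of mutation (the cycles (e$'$)(ii), (f$'$)(ii)), where the target is the commutation $t_it_\varepsilon=t_\varepsilon t_i$ rather than the absorbing relation.
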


\begin{proof}
In Lemma 5.1 of \cite{BM15}, Barot and Marsh proved the parts (1) and (2). We only need to prove the part (3).

Suppose without loss of generality that $i =k$. The only nontrivial case is when there is an arrow $\varepsilon\to k=i$ with a weight $q$ in $Q$. If $q=1$, then 
\begin{gather}
\begin{align*}
t_\varepsilon t_k t_\varepsilon t_k = (s_k s_\varepsilon s_k) s_k (s_k s_\varepsilon s_k) s_k =\begin{cases}
s_k s_\varepsilon s_k s_\varepsilon = s_\varepsilon s_k s_\varepsilon s_k = s_k (s_k s_\varepsilon s_k) s_k (s_k s_\varepsilon s_k)
= t_k t_\varepsilon t_k t_\varepsilon, \\ 
s_\varepsilon s_k s_\varepsilon = s_k s_\varepsilon s_k s_\varepsilon s_k = (s_k s_\varepsilon s_k) s_k (s_k s_\varepsilon s_k)=t_\varepsilon t_k t_\varepsilon.
\end{cases}
\end{align*}
\end{gather}
If $q=2$, note that $s_\varepsilon s_k=s_k s_\varepsilon=s_\varepsilon$, then
\[t_k t_\varepsilon= s_k (s_k s_\varepsilon s_k) = s_\varepsilon s_k = s_k s_\varepsilon = s_\varepsilon = t_\varepsilon t_k = t_\varepsilon.\]

Suppose that $i \neq k$. We divide this proof into three cases.

{\bf Case 1.} There are no arrows from $i,\varepsilon$ to $k$, then $t_i =s_i$, $t_\varepsilon=s_\varepsilon$ hold $(3)$.

{\bf Case 2.} There are arrows from one of $i, \varepsilon$ to $k$ and there are no arrows from the other of $i, \varepsilon$ to $k$ in $Q$, then we assume that there are arrows from $\varepsilon$ to $k$ and there are no arrows from $i$ to $k$ in $Q$. If $\varepsilon, i$ are not connected in $Q$, then $t_i t_\varepsilon = s_{i} (s_{k} s_{\varepsilon} s_{k})= (s_{k} s_{\varepsilon} s_{k})s_{i}= t_\varepsilon t_i$.
If $\varepsilon, i$ are connected by an edge with weight $1$ in $Q$, then
\begin{gather}
\begin{align*}
t_\varepsilon t_i t_\varepsilon = (s_{k} s_{\varepsilon} s_{k}) s_{i} (s_{k} s_{\varepsilon} s_{k}) = s_{k} s_{\varepsilon} s_{i} s_{\varepsilon} s_{k} & = 
\begin{cases} 
s_{k} s_{\varepsilon} s_{i} s_{\varepsilon} s_{k}s_{i}
= (s_{k} s_{\varepsilon} s_{k}) s_{i} (s_{k} s_{\varepsilon} s_{k})s_{i} =t_\varepsilon t_i t_\varepsilon t_i, \\
s_{i} s_{k} s_{\varepsilon} s_{i} s_{\varepsilon} s_{k} = s_{i} (s_{k} s_{\varepsilon} s_{k}) s_{i}(s_{k} s_{\varepsilon} s_{k})
= t_i t_\varepsilon t_i t_\varepsilon.
\end{cases}
\end{align*}
\end{gather}
That $\varepsilon, i$ are connected by an edge with weight $2$ and there are no arrows from $i$ to $k$ in $Q$ is impossible.



{\bf Case 3.} There are arrows from $i, \varepsilon$ to $k$. The possibilities for the subquivers induced by $i$, $\varepsilon$, and $k$ are enumerated in (a')--(g') of Figure \ref{three vertices subdiagram mutation 1}. We show that $t_i$ and $t_\varepsilon$ satisfy (3) by checking each case. Within each case, subcase (i) is when the subquiver of $Q$ is the diagram on the left, and subcase (ii) is when the subquiver of $Q$ is the diagram on the right.

$(a')$(i) We have $t_it_\varepsilon = (s_k s_i s_k)(s_k s_\varepsilon s_k)=s_k s_i s_\varepsilon s_k=s_k s_\varepsilon s_i s_k = (s_k s_\varepsilon s_k)(s_k s_i s_k)=t_\varepsilon t_i$.

$(a')$(ii) We have $t_it_\varepsilon =s_i s_\varepsilon=s_\varepsilon  s_i=t_\varepsilon t_i$.

$(b')$ (i) We have 
\begin{align*}
t_\varepsilon t_{i} t_\varepsilon & = s_\varepsilon (s_{k} s_i s_k) s_\varepsilon=s_\varepsilon s_{i} s_k s_i s_\varepsilon=s_{i} (s_\varepsilon s_k s_\varepsilon) s_i, \\
& = \begin{cases}
s_i s_\varepsilon s_k s_\varepsilon s_k s_i= s_\varepsilon s_{i} s_k s_i s_\varepsilon s_{i} s_k s_i = s_\varepsilon (s_k s_i s_k) s_\varepsilon (s_k s_i s_k) = t_\varepsilon t_{i} t_\varepsilon t_{i}, \\
s_i s_k s_\varepsilon s_k s_\varepsilon s_i = s_i s_k s_i s_\varepsilon  s_i s_k s_i s_\varepsilon= (s_{k} s_i s_k) s_\varepsilon (s_{k} s_i s_k) s_\varepsilon =t_{i} t_\varepsilon t_{i} t_\varepsilon.
\end{cases}
\end{align*}

$(b')$ (ii) We have $t_i t_\varepsilon=s_i (s_k s_\varepsilon s_k)=s_k(s_k s_i s_k s_\varepsilon)s_k =s_k(s_\varepsilon s_k s_i s_k)s_k=(s_k s_\varepsilon s_k)s_i=t_\varepsilon t_i$.

$(c')$ (i) We have
\begin{align*}
t_\varepsilon t_i t_\varepsilon & = (s_k s_\varepsilon s_k) s_i (s_k s_\varepsilon s_k)=s_k s_\varepsilon s_i s_k s_i s_\varepsilon s_k=s_k s_i s_\varepsilon s_k s_\varepsilon s_i s_k, \\
& = \begin{cases}
s_k s_i s_\varepsilon s_k s_\varepsilon s_k s_i s_k = s_k s_\varepsilon s_i s_k s_i s_\varepsilon s_k s_i = (s_k s_\varepsilon s_k) s_i (s_k s_\varepsilon s_k) s_i = t_\varepsilon t_{i} t_\varepsilon t_{i}, \\
s_k s_i  s_k s_\varepsilon s_k s_\varepsilon s_i s_k =s_i s_k s_\varepsilon s_i s_k s_i s_\varepsilon s_k = s_i (s_k s_\varepsilon s_k) s_i (s_k s_\varepsilon s_k) = t_{i} t_\varepsilon t_{i} t_\varepsilon.
\end{cases}
\end{align*}

$(c')$ (ii) We have $t_i t_\varepsilon=(s_k s_i s_k)s_\varepsilon=s_i s_k s_i s_\varepsilon =s_\varepsilon s_i s_k s_i=s_\varepsilon s_k s_i s_k=t_\varepsilon t_i$.

$(d')$ (i) We have 
$$t_i t_\varepsilon=(s_k s_i s_k)(s_k s_\varepsilon s_k)=s_k s_i s_\varepsilon s_k=s_k s_\varepsilon s_i s_k=(s_k s_\varepsilon s_k)(s_k s_i s_k)=t_\varepsilon t_i.$$

$(d')$(ii) We have $t_i t_\varepsilon=s_i s_\varepsilon=s_\varepsilon s_i=t_\varepsilon t_i$.

$(e')$(i) Note that $s_i s_k s_\varepsilon=s_k s_\varepsilon$ and $s_\varepsilon s_k s_i=s_\varepsilon s_k$. We have
\begin{align*}
t_i t_\varepsilon&=(s_k s_i s_k) s_\varepsilon =s_k (s_k s_\varepsilon) = s_\varepsilon = t_\varepsilon, \\
t_\varepsilon t_i&=s_\varepsilon (s_k s_i s_k) = (s_\varepsilon s_k) s_k=s_\varepsilon = t_\varepsilon.
\end{align*}

$(e')$(ii) We have 
$$t_it_\varepsilon=s_i (s_k s_\varepsilon s_k)=s_i s_k (s_\varepsilon s_k s_i s_k)s_k s_i=s_i s_k (s_k s_i s_ks_\varepsilon)s_k s_i=s_ks_\varepsilon s_k s_i=t_\varepsilon t_i.$$

$(f')$(i) Note that $s_i s_k s_\varepsilon = s_k s_\varepsilon$ and $s_\varepsilon s_k s_i=s_\varepsilon s_k$. We have
\begin{align*}
t_i t_\varepsilon = s_i(s_k s_\varepsilon s_k)=s_k s_\varepsilon s_k= t_\varepsilon, \
t_\varepsilon t_i = (s_k s_\varepsilon s_k) s_i =s_k s_\varepsilon s_k=t_\varepsilon .
\end{align*}

$(f')$(ii) We have $t_it_\varepsilon=(s_k s_i s_k) s_\varepsilon= s_k (s_i s_k s_\varepsilon s_k) s_k =s_k (s_k s_\varepsilon s_k s_i) s_k = s_\varepsilon s_k s_is_k=t_\varepsilon t_i$.

$(g')$(i) Note that $s_k s_\varepsilon = s_\varepsilon s_k=s_\varepsilon$. We have
\begin{align*}
t_\varepsilon t_i t_\varepsilon & = (s_k s_\varepsilon s_k) s_i (s_k s_\varepsilon s_k) = s_\varepsilon s_i s_\varepsilon
= \begin{cases}
s_i s_\varepsilon s_i s_\varepsilon = t_i t_\varepsilon t_i t_\varepsilon,\\
s_\varepsilon s_i s_\varepsilon s_i= t_\varepsilon t_i t_\varepsilon t_i.
\end{cases}
\end{align*}

$(g')$(ii) Note that $s_k s_\varepsilon = s_\varepsilon s_k =s_\varepsilon$. We have
\begin{align*}
t_\varepsilon t_i t_\varepsilon & = s_\varepsilon (s_k s_i s_k) s_\varepsilon=
\begin{cases}
s_\varepsilon s_i  s_\varepsilon s_k = s_\varepsilon s_i  s_\varepsilon s_i s_k = s_\varepsilon (s_k s_i s_k) s_\varepsilon (s_k s_i s_k) = t_\varepsilon t_i t_\varepsilon t_i, \\
s_k s_\varepsilon s_i s_\varepsilon = s_k s_i s_\varepsilon s_i s_\varepsilon= (s_k s_i s_k) s_\varepsilon (s_k s_i s_k) s_\varepsilon = t_i t_\varepsilon t_i t_\varepsilon.
\end{cases}
\end{align*}
\end{proof}

The possibilities for chordless cycles in $\mathcal{U}(\Delta)$ are enumerated in Lemma \ref{diagrams with frozen}. For (R3), $(a)$ is trivial and $(b)$ follows from the commutative property of $t_k$ and $t_i$, where $i$ is not incident to $k$. Barot and Marsh proved in \cite{BM15} that (R3)~(i) holds for (a)--(g). It is enough to show that (R3)~(ii) and (R3)~(iii) hold by checking $(a')$--$(l')$. In each case, we need to check that the corresponding cycle relations hold. In the sequel, we frequently use (R1) and (R2) without comment.

$(a')$ We have $t_\varepsilon t_k t_i t_k =  s_\varepsilon s_k (s_k s_i s_k) s_k = s_\varepsilon s_i = s_i s_\varepsilon = s_k (s_k s_i s_k) s_k s_\varepsilon  = t_k t_i t_k t_\varepsilon.$

$(b')$ We have $t_\varepsilon t_i t_k t_i = (s_k s_\varepsilon s_k) s_i s_k s_i = s_k s_\varepsilon s_i s_k = s_k s_i s_\varepsilon s_k=s_i s_k s_i (s_k s_\varepsilon s_k)=t_i t_k t_i t_\varepsilon$.

$(c')$ We have $t_\varepsilon t_k t_i t_k =  s_\varepsilon s_k (s_k s_i s_k) s_k = s_\varepsilon s_i = s_i s_\varepsilon = s_k (s_k s_i s_k) s_k s_\varepsilon  = t_k t_i t_k t_\varepsilon.$

$(d')$ We have $t_i t_k t_\varepsilon t_k = s_i s_k (s_k s_\varepsilon s_k) s_k  = s_i s_\varepsilon = s_\varepsilon s_i = s_k (s_k s_\varepsilon s_k) s_k s_i = t_k t_\varepsilon t_k t_i.$

$(e')$ Note that $s_k s_\varepsilon= s_\varepsilon s_k = s_\varepsilon$ and $s_k s_i s_\varepsilon s_i=s_i s_\varepsilon s_is_k$. We have
\begin{align*}
t_\varepsilon t_i t_k t_i & = (s_k s_\varepsilon s_k) s_i s_k s_i = s_\varepsilon s_i s_k s_i= s_i s_k s_i s_\varepsilon = s_i s_k s_i (s_k s_\varepsilon s_k) = t_i t_k t_i t_\varepsilon.
\end{align*}

$(f')$ Note that $s_k s_\varepsilon= s_\varepsilon s_k = s_\varepsilon$ and $s_\varepsilon s_i s_k s_i=s_i s_k s_i s_\varepsilon$. We have
\begin{align*}
t_k t_i t_\varepsilon t_i & = s_k (s_k s_i s_k) s_\varepsilon (s_k s_i s_k) = s_i s_\varepsilon s_i s_k = s_k s_i s_\varepsilon s_i = (s_k s_i s_k) s_\varepsilon (s_k s_i s_k) s_k = t_i t_\varepsilon t_i t_k.
\end{align*}

$(g')$ Note that $s_ks_i=s_is_k$ and $s_\varepsilon s_i s_j s_i=s_i s_j s_is_\varepsilon$.We have
\begin{align*}
t_\varepsilon t_i t_j t_k  t_j t_i & = (s_k s_\varepsilon s_k) s_i s_j s_k s_j s_i = s_k s_\varepsilon s_i s_k s_j s_k s_j s_i = s_k (s_\varepsilon s_i s_j s_i)s_k \\
& = s_k (s_i s_j s_is_\varepsilon)s_k = s_i s_j s_k s_j s_i (s_k s_\varepsilon s_k)=t_i t_j t_k t_j t_i t_\varepsilon.
\end{align*}

$(h')$ Note that $s_j s_\varepsilon = s_\varepsilon s_j$ and $ s_\varepsilon s_i s_j s_k s_j s_i = s_i s_j s_k s_j s_i s_\varepsilon$. We have
\begin{align*}
t_\varepsilon t_k t_jt_k &= s_\varepsilon s_k (s_k s_j s_k) s_k = s_\varepsilon s_j = s_j s_\varepsilon= s_k (s_k s_j s_k) s_k s_\varepsilon= t_k t_jt_kt_\varepsilon, \\
t_\varepsilon t_i t_j t_i &= s_\varepsilon s_i (s_k s_j s_k) s_i = s_\varepsilon s_i s_j s_k s_j s_i = s_i s_j s_k s_j s_i s_\varepsilon=s_i (s_k s_j s_k) s_is_\varepsilon=t_i t_j t_it_\varepsilon.
\end{align*}

$(i')$ Note that $s_k s_i = s_i s_k$ and $s_\varepsilon s_j s_i s_j  = s_j s_i s_j s_\varepsilon$. We have
\begin{align*}
t_\varepsilon t_k t_j t_i t_j t_k & = s_\varepsilon s_k (s_k s_j s_k) s_i (s_k s_j s_k) s_k  = s_\varepsilon s_j s_k s_i s_k s_j  = s_\varepsilon s_j s_i s_j  = s_j s_i s_j s_\varepsilon \\
& = s_k (s_k s_j s_k) s_i (s_k s_j s_k) s_k s_\varepsilon = t_k t_j t_i t_j t_k t_\varepsilon.
\end{align*}

$(j')$ Note that $s_j s_\varepsilon = s_\varepsilon s_j$ and $s_\varepsilon s_k s_j s_i s_j s_k = s_k s_j s_i s_j s_k s_\varepsilon$. We have
\begin{align*}
t_\varepsilon t_j t_k t_j & = (s_k s_\varepsilon s_k) s_j s_k s_j = s_k s_\varepsilon s_j s_k = s_k s_j s_\varepsilon s_k =  s_j s_k s_j (s_k s_\varepsilon s_k) = t_j t_k t_j t_\varepsilon, \\
t_\varepsilon t_j t_i t_j & = (s_k s_\varepsilon s_k) s_j s_i s_j = s_j s_i s_j s_k s_\varepsilon s_k = t_j t_i t_j t_\varepsilon.
\end{align*}

$(k')$ Note that $s_\varepsilon s_j s_i s_j = s_j s_i s_j s_\varepsilon$. We have
$t_\varepsilon t_j t_k t_i t_k t_j= s_\varepsilon s_j s_k (s_k s_i s_k) s_k s_j = s_\varepsilon s_j s_i s_j = s_j s_i s_j s_\varepsilon = s_j s_k (s_k s_i s_k) s_k s_j s_\varepsilon = t_j t_k t_i t_k t_j t_\varepsilon.$

$(l')$ Note that $s_i s_j = s_j s_i$, $s_\varepsilon s_k = s_k s_\varepsilon$, and $s_\varepsilon s_j s_k s_i s_k s_j=s_j s_k s_i s_k s_j s_\varepsilon $. We have
\begin{align*}
t_i t_k t_j t_k & = s_i s_k (s_k s_j s_k) s_k = s_i s_j = s_j s_i = s_k (s_k s_j s_k) s_k s_i = t_k t_j t_k t_i, \\
t_\varepsilon t_j t_i t_j & = s_\varepsilon (s_k s_j s_k) s_i (s_k s_j s_k) = s_k (s_\varepsilon s_j s_k s_i s_k s_j) s_k =  s_k (s_j s_k s_i s_k s_j s_\varepsilon) s_k \\
& = (s_k s_j s_k) s_i (s_k s_j  s_k) s_\varepsilon = t_j t_i t_j t_\varepsilon.
\end{align*}

In order to prove the relation (R4), we need the following lemma.

\begin{lemma}\label{lemma of mutations sequence}
Let $Q \in \mathcal{U}(A^{\varepsilon}_\ell)$ for some $\ell\geq 1$ and $c$ be a fixed vertex in $Q$. Let $k$ be a mutable vertex. Suppose that $(c,\ldots,\varepsilon)$ (respectively, $(c'=c,\ldots,\varepsilon'=\varepsilon)$) is the shortest path from $c$  (respectively, $c'$) to $\varepsilon$  (respectively, $\varepsilon'$) in $Q$  (respectively, $\mu_k(Q)$). Then
\[
P(t_{c'},t_{\varepsilon'})=P(s_c,s_\varepsilon) \text{ or } P(s_c,s_\varepsilon)s_k \text{ or }s_kP(s_c,s_\varepsilon) \text{ or } s_kP(s_c,s_\varepsilon)s_k.
\]
and 
\[
P(t_{\varepsilon'},t_{c'})=P(s_\varepsilon,s_c) \text{ or } s_kP(s_\varepsilon,s_c) \text{ or }P(s_\varepsilon,s_c)s_k \text{ or } s_kP(s_\varepsilon,s_c)s_k.
\]
In particular, $P(t_{c'},t_{\varepsilon'})P(t_{\varepsilon'},t_{c'})=P(s_c,s_\varepsilon)P(s_\varepsilon,s_c)$ or $s_kP(s_c,s_\varepsilon)P(s_\varepsilon,s_c)s_k$.
\end{lemma}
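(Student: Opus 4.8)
The plan is to fix a shortest path $\gamma = (c = v_0, v_1, \ldots, v_m = \varepsilon)$ from $c$ to $\varepsilon$ in $Q$, to take its reverse as the shortest path from $\varepsilon$ to $c$, and to track how $\gamma$ and the elements $t_i$ of \eqref{define variables} change under the single mutation $\mu_k$. Writing $P(s_c,s_\varepsilon)=s_{v_0}s_{v_1}\cdots s_{v_m}$, the goal is to show $P(t_{c'},t_{\varepsilon'}) = X\,P(s_c,s_\varepsilon)\,Y$ in $M(Q)$ for some $X,Y\in\{e,s_k\}$; the four listed possibilities are exactly the four choices of $(X,Y)$. Only two elementary mechanisms will be needed for all the simplification: the involution relation $s_k^2=e$ from (R1), which telescopes adjacent pairs $s_ks_k$, and the commutation $s_ks_i=s_is_k$ from (R2), valid whenever $k$ and $i$ are not connected, which lets a residual $s_k$ slide along the path to one of its endpoints.

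Granting the formula for $P(t_{c'},t_{\varepsilon'})$, I would deduce the ``in particular'' clause first, since it is purely formal. Because every $t_i$ is a palindromic word in the $s$'s and the shortest path from $\varepsilon'$ to $c'$ is the reverse of the one from $c'$ to $\varepsilon'$, the element $P(t_{\varepsilon'},t_{c'})$ is the reverse word of $P(t_{c'},t_{\varepsilon'})$; since the relations used in the reduction (commutation, $s_k^2=e$, $s_\varepsilon^2=s_\varepsilon$) are all invariant under reversing words, this gives $P(t_{\varepsilon'},t_{c'}) = Y\,P(s_\varepsilon,s_c)\,X$ with the same $X,Y$. Multiplying, the central $Y^2=e$ collapses and leaves $X\,P(s_c,s_\varepsilon)P(s_\varepsilon,s_c)\,X$ with $X\in\{e,s_k\}$, which are exactly the two asserted values.

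The core of the argument is the case analysis establishing the formula for $P(t_{c'},t_{\varepsilon'})$, organized by the position of $k$ relative to $\gamma$. If $k$ is incident to no vertex of $\gamma$, then $\mu_k$ alters no edge of $\gamma$ and no $t_{v_i}$ acquires an $s_k$, so $\gamma$ remains shortest and $P(t_{c'},t_{\varepsilon'})=P(s_c,s_\varepsilon)$. If $k$ meets $\gamma$, the classification of $\mathcal{U}(A^\varepsilon_\ell)$ (all nontrivial cycles oriented $3$-cycles, conditions (1)--(6)) together with the shortest-path hypothesis forces a very restricted local picture: $k$ cannot be adjacent to a single interior vertex of $\gamma$ (that would create a degree-$3$ vertex whose two path-edges lie in a common $3$-cycle, producing a chord that shortens $\gamma$), so $k$ is either a vertex of $\gamma$ itself or forms a $3$-cycle with two consecutive vertices of $\gamma$. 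In each configuration I would read off, from the local mutation pictures of Figure \ref{three vertices subdiagram mutation 1} and the cycle rule of Proposition \ref{FZ oriented cycle}, how $\mu_k$ reroutes $\gamma$. The decisive phenomenon is that mutating at a vertex of an oriented $3$-cycle lying on $\gamma$ deletes the opposite edge (its mutated weight vanishes), so $\gamma$ must be rerouted through $k$; the inserted factor $t_k=s_k$ then meets an adjacent $s_ks_{v}s_k$ and the pair $s_ks_k$ telescopes, after which the surviving $s_k$ commutes out to the boundary. The endpoint configurations $k=c$ or $k$ adjacent to $c$ (respectively near $\varepsilon$) are precisely where a genuine factor $X=s_k$ (respectively $Y=s_k$) survives.

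The main obstacle I anticipate is the bookkeeping of completeness rather than any single hard identity: one must confirm the classification truly excludes the problematic configurations, above all $k$ connected to two non-adjacent vertices of $\gamma$ (forbidden by shortest-path minimality) and a chordless $4$-cycle through $k$ (excluded by conditions (1)--(6)), and that in every surviving local picture the residual $s_k$ is free to commute to the correct end. Once the local pictures are enumerated, each reduction is a short computation with $s_k^2=e$ and the commutation relations, exactly as in the model cases that pin down $(X,Y)$; so the real content lies in verifying that the list of local pictures is exhaustive and that no $s_k$ is ever trapped between two generators both connected to $k$.
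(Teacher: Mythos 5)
Your proposal is correct and takes essentially the same route as the paper's own proof: a case analysis on how $k$ sits relative to the shortest path (with the classification of $\mathcal{U}(A^{\varepsilon}_\ell)$ ruling out bad configurations such as $k$ meeting three or more path vertices or two non-adjacent ones), word reduction via $s_k^2=e$ and the commutation relations, and reversal of words to obtain $P(t_{\varepsilon'},t_{c'})$ and the product formula. One small caution: your enumeration sentence omits the configuration where $k$ lies off $\gamma$ and is adjacent to exactly one endpoint (e.g.\ $k$ hanging off $c$), which your later remarks do implicitly cover, but note that this case produces the two-sided form $s_kP(s_c,s_\varepsilon)s_k$ rather than a one-sided factor $X=s_k$.
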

\begin{proof}
{\bf Case 1.} The vertex $k$ does not connect to any vertex in $\{c,\ldots,\varepsilon\}$. Then the shortest path relations keep unchanged. 

{\bf Case 2.} The vertex $k$ connects to a vertex $i\in\{c,\ldots,\varepsilon\}$. Either $P(t_{c'},t_{\varepsilon'})=P(s_c,s_\varepsilon)$ or by the commutative property of $s_k$ and $s_j$, where $j$ goes over all vertices which are not incident to $k$, $P(t_{c'},t_{\varepsilon'})=s_kP(s_c,s_\varepsilon)s_k$. 

{\bf Case 3.} The vertex $k$ connects to two vertices $i,j\in\{c,\ldots,\varepsilon\}$. By Proposition \ref{FZ oriented cycle} and the definition of $\mathcal{\phi}(A^{\varepsilon}_{k})$ in Section \ref{mutations classes with frozen vertices}, either vertices $i$ and $j$ are adjacent or $k\in\{c,\ldots,\varepsilon\}$ and vertices $i$ and $j$ are neighbours of $k$. In the first case, $P(t_{c'},t_{\varepsilon'})=P(s_c,s_\varepsilon)s_k$ or $s_kP(s_c,s_\varepsilon)$. In the later case, either the shortest path relation keeps unchanged or $P(t_{c'},t_{\varepsilon'})=P(s_c,s_\varepsilon)s_k$ or $s_kP(s_c,s_\varepsilon)$ or $s_kP(s_c,s_\varepsilon)s_k$.

{\bf Case 4.} By the definition of $\mathcal{\phi}(A^{\varepsilon}_\ell)$ in Section \ref{mutations classes with frozen vertices}, the vertex $k$ is impossible to connect to three or more vertices in $\{c,\ldots,\varepsilon\}$.

We reverse the order of $P(t_{c'},t_{\varepsilon'})$, we get $P(t_{\varepsilon'},t_{c'})$. 
\end{proof}

In type $B^{\varepsilon}_n$, without loss of generality, we assume that $(0,1,\ldots,d,\varepsilon)$ is the shortest path from $0$ to $\varepsilon$ in $Q$, where the weight between vertex $0$ and vertex $1$ is 2. Without loss of generality, we assume that $(0,1',\ldots,d',\varepsilon)$ is the shortest path from $0$ to $\varepsilon$ in $\mu_k(Q)$, where the weight between vertex $0$ and vertex $1'$ is 2.

In Figure \ref{mutation class of type B}, if $k=0$ or $k=1$ or $k,0,1,$ is an oriented 3-cycle, it is easy to check that $P(t_0,t_\varepsilon)=P(t_{1'},t_\varepsilon)$ and $P(t_\varepsilon,t_0)=P(t_\varepsilon,t_{1'})$. Otherwise, by Lemma \ref{lemma of mutations sequence} and the commutative property of $s_k$ and $s_0$, we have $P(t_0,t_\varepsilon)=P(t_{1'},t_\varepsilon)$ and $P(t_\varepsilon,t_0)=P(t_\varepsilon,t_{1'})$.
 
In type $D^{\varepsilon}_n$, Lemma \ref{lemma of mutations sequence} allows us to reduce 
the proof of (R4) to the proof of the following cases: For any quiver in Figure \ref{Type I}, we check that $k=a,b,c$, for any quiver in Figures \ref{Type II}--\ref{Type III}, we check that $k=a,b,c,d$, and for any quiver in Figure \ref{Type IV}, we check $k=i_1,i_2,\ldots, i_d, c',c'',c''',\ldots$. 

In Figures \ref{Type I}--\ref{Type III}, if $k=a$, then either the shortest path relations keep unchanged or $P(t_a,t_\varepsilon)P(t_\varepsilon,t_a)=s_aP(s_a,s_\varepsilon)P(s_\varepsilon,s_a)s_a$, $P(t_b,t_\varepsilon)P(t_\varepsilon,t_b)=s_aP(s_b,s_\varepsilon)P(s_\varepsilon,s_b)s_a$ by the commutative relation $s_as_b=s_bs_a$, so $P(t_a,t_\varepsilon)P(t_\varepsilon,t_a)=P(t_b,t_\varepsilon)P(t_\varepsilon,t_b)$. We prove that $k=b$ by interchanging $a$ and $b$. If $k=c$ or $k=d$, then we prove the following case, other possibilities are similar:
\begin{align*}
\scalemath {0.8} {\xymatrix{&  &  &  & & \overset{0} \circ & \\
\overset{\varepsilon}{\bullet} \ar@{->}[r]  & \overset{d} \circ \ar@{-->}[r] & \overset{4} \circ \ar@{->}[r] & \overset{3} \circ \ar@{->}[r] &  \overset{2} \circ \ar@{->}[ur]\ar@{->}[dr] & \\
& & & & &  \underset{1} \circ & }
\overset{\mu_{2}}{\longleftrightarrow}
\xymatrix{& &  &  &  & \overset{0} \circ \ar@{->}[dr]&  \\
& \overset{\varepsilon}{\bullet} \ar@{->}[r]  & \overset{d}\circ \ar@{-->}[r] & \overset{4} \circ \ar@{->}[r] & \overset{3} \circ  \ar@{->}[dr] \ar@{->}[ur] &  & \overset{2} \circ \ar@{->}[ll] \\
& & & & & \underset{1} \circ \ar@{->}[ur] & }}
\end{align*}

(i) We have
\begin{align*}
P(t_0, t_\varepsilon) P(t_\varepsilon,t_0) & = t_0 t_3 P(t_4, t_\varepsilon) P(t_\varepsilon,t_4) t_3 t_0 = s_0 (s_2s_3s_2) P(s_4, s_\varepsilon) P(s_\varepsilon,s_4)(s_2s_3s_2)s_0 \\
& = P(s_0, s_\varepsilon) P(s_\varepsilon,s_0),\\
P(t_1, t_\varepsilon) P(t_\varepsilon,t_1) & = t_1 t_3 P(t_4, t_\varepsilon) P(t_\varepsilon,t_4) t_3 t_1 = s_1 (s_2s_3s_2) P(s_4, s_\varepsilon) P(s_\varepsilon,s_4)(s_2s_3s_2) s_1 \\
& = P(s_1, s_\varepsilon) P(s_\varepsilon,s_1).
\end{align*}

(ii) We have
\begin{align*}
P(t_0, t_\varepsilon) P(t_\varepsilon, t_0) & = t_0 t_2 P(t_3, t_\varepsilon) P(t_\varepsilon, t_3) t_2  t_0 = (s_2s_0 s_2)s_2 P(s_3, s_\varepsilon) P(s_\varepsilon, s_3) s_2 (s_2s_0 s_2) \\
& = s_2 P(s_0, s_\varepsilon) P(s_\varepsilon,s_0) s_2,\\
P(t_1, t_\varepsilon) P(t_\varepsilon,t_1)& =t_1 t_2 P(t_3, t_\varepsilon) P(t_\varepsilon,t_3)t_2t_1 =(s_2s_1 s_2)s_2 P(s_3, s_\varepsilon) P(s_\varepsilon, s_3) s_2 (s_2s_1 s_2) \\
& = s_2 P(s_1, s_\varepsilon) P( s_\varepsilon,s_1) s_2.
\end{align*}
Finally, in Figure \ref{Type IV}, we prove the following two cases, other possibilities are similar:
\begin{align*}
\scalemath {0.8} {\xymatrix{ &  & & \overset{2}{\circ}  \ar@{->}[r] \ar@/_10pt/@{->}[dl] \ar@{<-}[dd] & \overset{3}{\circ} \ar@/^35pt/@{-->} [dd]   \\
\overset{\varepsilon}{\bullet} \ar@{-->}[r] & \overset{0}{\circ} \ar[r]   & \overset{1}{\circ} \ar@/_10pt/@{->}[dr] &   & &   \\
& & &  \underset{d}{\circ} \ar@{<-}[r] & \underset{d-1}{\circ}}  \quad \overset{\mu_1}{\longleftrightarrow}
\xymatrix{ &  & & \overset{1}{\circ}  \ar@{->}[r] \ar@/_10pt/@{->}[dl] \ar@{<-}[dd] & \overset{2}{\circ} \ar@/^35pt/@{-->} [dd]   \\
& \overset{\varepsilon}{\bullet} \ar@{-->}[r]   & \overset{0}{\circ} \ar@/_10pt/@{->}[dr] &   & &   \\
& & &  \underset{d}{\circ} \ar@{<-}[r] & \underset{d-1}{\circ} }} \\
\end{align*}

(i) We have $t_3 \cdots t_d P(t_0,t_\varepsilon) P(t_\varepsilon,t_0)  t_d \cdots t_3=s_3 \cdots s_d s_1 P(s_0,s_\varepsilon) P(s_\varepsilon,s_0) s_1 s_d \cdots s_3$, and 
$t_2 t_1 P(t_0,t_\varepsilon) P(t_\varepsilon,t_0) t_1 t_2 =s_1 s_2 s_1 P(s_0,s_\varepsilon) P(s_\varepsilon,s_0) s_1 s_2 s_1 = s_2 s_1 P(s_0,s_\varepsilon) P(s_\varepsilon,s_0)$ $s_1 s_2$.

(ii) We have $t_d t_1 P(t_0,t_\varepsilon) P(t_\varepsilon,t_0) t_1 t_d = s_1 s_d P(s_0,s_\varepsilon) P(s_\varepsilon,s_0) s_d s_1$ and 
\begin{align*}
& t_{d-1} \cdots t_1 P(t_0,t_\varepsilon) P(t_\varepsilon,t_0) t_1 \cdots t_{d-1}= s_{d-1} \cdots s_1 P(s_0,s_\varepsilon) P(s_\varepsilon,s_0) s_1 \cdots s_{d-1} \\
& =s_{d-1} \cdots s_2 s_1 s_2 P(s_0,s_\varepsilon) P(s_\varepsilon,s_0) s_2 s_1 s_2\cdots s_{d-1} \\
&= s_1 (s_{d-1} \cdots s_2 s_1 P(s_0,s_\varepsilon) P(s_\varepsilon,s_0) s_1 s_2 \cdots s_{d-1}) s_1.
\end{align*}

\begin{align*}
\scalemath {0.8} {\xymatrix{&  & & \overset{1}{\circ}  \ar@{->}[r] \ar@/_10pt/@{->}[dl] \ar@{<-}[dd] & \overset{2}{\circ}  \ar@{-->}[r]  & \overset{k-1}{\circ}  \\
& \overset{\varepsilon}{\bullet} \ar@{-->}[r]   & \overset{0}{\circ} \ar@/_10pt/@{->}[dr] &   & & & \overset{k}{\circ} \ar@/_10pt/@{<-}[ul] \\
& & &  \underset{d}{\circ} \ar@{<-}[r] & \underset{d-1}{\circ} & \underset{k+1}{\circ}\ar@/_10pt/@{<-}[ur]  \ar@{-->}[l]}  \quad \overset{\mu_k}{\longleftrightarrow}
\xymatrix{& & \overset{1}{\circ}  \ar@{->}[r] \ar@/_10pt/@{->}[dl] \ar@{<-}[dd] & \overset{2}{\circ} \ar@{-->}[r]  & \overset{k-1}{\circ}  \ar@{->}[dd] \\
\overset{\varepsilon}{\bullet} \ar@{-->}[r]   & \overset{0}{\circ} \ar@/_10pt/@{->}[dr] &   & & & \overset{k}{\circ} \ar@/_10pt/@{->}[ul] \\
& & \underset{d}{\circ} \ar@{<-}[r] & \underset{d-1}{\circ} & \underset{k+1}{\circ}\ar@/_10pt/@{->}[ur]  \ar@{-->}[l] }}
\end{align*}

(i) We have $t_1 P(t_0,t_\varepsilon) P(t_\varepsilon,t_0) t_1 =s_1 P(s_0,s_\varepsilon) P(s_\varepsilon,s_0) s_1$ and 
\begin{gather}
\begin{align*}
& t_2 \cdots t_{k-1} t_{k+1} \cdots t_d P(t_0,t_\varepsilon) P(t_\varepsilon,t_0)  t_d \cdots t_{k+1} t_{k-1} \cdots t_2 \\
& =s_2 \cdots (s_k s_{k-1} s_k) s_{k+1} \cdots s_d P(s_0,s_\varepsilon) P(s_\varepsilon,s_0) s_d \cdots s_{k+1} (s_k s_{k-1} s_k) \cdots s_2 \\
& =s_2 \cdots (s_{k-1} s_k s_{k-1}) s_{k+1} \cdots  s_d P(s_0,s_\varepsilon) P(s_\varepsilon,s_0) s_d  \cdots s_{k+1} (s_{k-1} s_k s_{k-1}) \cdots s_2 \\
& =s_2 \cdots s_{k-1} s_k s_{k+1} \cdots s_d P(s_0,s_\varepsilon) P(s_\varepsilon,s_0) s_d \cdots s_{k+1} s_k s_{k-1} \cdots s_2.
\end{align*}
\end{gather}

(ii) We have $t_1 P(t_0,t_\varepsilon) P(t_\varepsilon,t_0) t_1=s_1 P(s_0,s_\varepsilon) P(s_\varepsilon,s_0) s_1$ and 
\begin{gather}
\begin{align*}
&t_2 \cdots t_{k-1} t_k t_{k+1} \cdots t_d P(t_0,t_\varepsilon) P(t_\varepsilon,t_0) t_d \cdots t_{k+1} t_k t_{k-1} \cdots t_2 \\
&=s_2 \cdots s_{k-1} s_k (s_k s_{k+1} s_k) \cdots s_d P(s_0,s_\varepsilon) P(s_\varepsilon,s_0) s_d \cdots (s_k s_{k+1} s_k) s_k s_{k-1} \cdots s_2 \\
&=s_2 \cdots s_{k-1} s_{k+1} s_k \cdots s_d P(s_0,s_\varepsilon) P(s_\varepsilon,s_0) s_d \cdots s_k s_{k+1} s_{k-1} \cdots s_2 \\
&= s_2 \cdots s_{k-1} s_{k+1} \cdots s_d P(s_0,s_\varepsilon) P(s_\varepsilon,s_0) s_d \cdots s_{k+1}s_{k-1} \cdots s_2.
\end{align*}
\end{gather}

Proposition \ref{inverse monoids homomorphism} is proved. 

\section*{Acknowledgements}
The authors would like to express their gratitude to B. Everitt, W. N. Franzsen, R. Schiffler, and C. C. Xi for helpful discussions. The authors are very grateful to the anonymous reviewer for the comments and valuable suggestions, especially, the reviewer pointed out an error in the former version. B. Duan was supported by China Scholarship Council to visit Department of Mathematics at University of Connecticut and he would like to thank R. Schiffler for hospitality during his visit. This work was partially supported by the National Natural Science Foundation of China (no. 11771191, 11371177, 11501267, 11401275). The research of J.-R. Li on this project is supported by the Minerva foundation with funding from the Federal German Ministry for Education and Research, and by the Austrian Science Fund (FWF): M 2633 Meitner Program.

\begin{bibdiv}
\begin{biblist}

\bib{B69}{article}{
author={Bannai, E.},
title={Automorphisms of irreducible Weyl groups},
journal={J. Fac. Sci. Univ. Tokyo Sect. I},
date={1969},
volume={16},
pages={273--286}}

\bib{BM15}{article}{
author={Barot, M.},author={Marsh, R. J.},
title={Reflection group presentations arising from cluster algebras},
journal={Trans. Amer. Math. Soc.},
date={2015},
volume={367},
number={3},
pages={1945--1967}}

\bib{BGP73}{article}{
author={Bernstein, I. N.},author={Gel'fand, I. M.},author={Ponomarev, V. A.},
title={Coxeter functors, and Gabriel's theorem},
journal={Uspehi Mat. Nauk},
date={1973},
volume={28},
number={2},
pages={19--33}}

\bib{B02}{book}{
author={Bourbaki, N.},
title={Lie groups and Lie algebras, Chapters 4--6},
publisher={Springer-Verlag},
address={Berlin},
year={2002}}

\bib{BV08}{article}{
author={Buan, A. B.},author={Vatne, D. F.},
title={Derived equivalence classification for cluster-tilted algebras of type $A_n$},
journal={J. Algebra},
date={2008},
volume={319},
number={7},
pages={2723--2738}}

\bib{CD00}{article}{
author={Charney, R.},author={Davis, M.},
title={When is a Coxeter system determined by its Coxeter group?},
journal={J. London Math. Soc. (2)},
date={2000},
volume={61},
number={2},
pages={441--461}}

\bib{E06}{article}{
author={East, J.},
title={Cellular algebras and inverse semigroups},
journal={J. Algebra},
date={2006},
volume={296},
number={2},
pages={505--519}}

\bib{E07}{article}{
author={East, J.},
title={Braids and partial permutations},
journal={Adv. Math.},
date={2007},
volume={213},
number={1},
pages={440--461}}

\bib{E11}{article}{
author={East, J.},
title={Generators and relations for partition monoids and algebras},
journal={J. Algebra},
date={2011},
volume={339},
pages={1--26}}

\bib{EF10}{article}{
author={Everitt, B.},author={Fountain, J.},
title={Partial symmetry, reflection monoids and Coxeter groups},
journal={Adv. Math.},
date={2010},
volume={223},
number={5},
pages={1782--1814}}

\bib{EF13}{article}{
author={Everitt, B.},author={Fountain, J.},
title={Partial mirror symmetry, lattice presentations and algebraic monoids},
journal={Proc. Lond. Math. Soc. (3)},
date={2013},
volume={107},
number={2},
pages={414--450}}

\bib{EL04}{article}{
author={Easdown, D.},author={Lavers, T. G.},
title={The inverse braid monoid},
journal={Adv. Math.},
date={2004},
volume={186},
number={2},
pages={438--455}}

\bib{F03}{article}{
author={FitzGerald, D. G.},
title={A presentation for the monoid of uniform block permutations},
journal={Bull. Austral. Math. Soc.},
date={2003}
volume={68},
number={2},
pages={317--324}}

\bib{FJ98}{article}{
author={FitzGerald, D. G.},author={Leech, J.},
title={Dual symmetric inverse monoids and representation theory},
journal={J. Austral. Math. Soc. Ser. A},
date={1998}
volume={64},
number={3},
pages={345--367}}

\bib{Fran01}{article}{
author={Franzsen, W. N.},
title={Automorphisms of Coxeter Groups},
journal={PhD Thesis, University of Sydney, Australia},
date={2001},
pages={1--92}}

\bib{FT13}{article}{
author={Felikson, A.},author={Tumarkin, P.},
title={Coxeter groups and their quotients arising from cluster algebras},
journal={Int. Math. Res. Not. IMRN},
volume={2016},
number={17},
pages={5135--5186}}

\bib{FT14}{article}{
author={Felikson, A.},author={Tumarkin, P.},
title={Coxeter groups, quiver mutations and geometric manifolds},
journal={J. Lond. Math. Soc. (2)},
date={2016}
volume={94},
number={1},
pages={38--60}}

\bib{FR08}{article}{
author={Fomin, S.},author= {Reading, N.},
title={Root systems and generalized associahedra},
conference={
title={Geometric combinatorics}},
book={
series={IAS/Park City Math. Ser.},
volume={13},
publisher={Amer. Math. Soc.},
address={Providence, RI},
date={2007}},
pages={63--131}}

\bib{FZ02}{article}{
author={Fomin, S.},author={Zelevinsky, A.},
title={Cluster algebras I: Foundations},
journal={J. Amer. Math. Soc.},
date={2002},
volume={15},
number={2},
pages={497--529}}

\bib{FZ03}{article}{
author={Fomin, S.},author={Zelevinsky, A.},
title={Cluster algebras. II. Finite type classification},
journal={Invent. Math.},
date={2003},
volume={154},
number={1},
pages={63--121}}

\bib{G06}{article}{
author={Geck, M.},
title={Relative Kazhdan-Lusztig cells},
journal={Represent Theory},
date={2006},
volume={10},
pages={481--524}}

\bib{G07}{article}{
author={Geck, M.},
title={Hecke algebras of finite type are cellular},
journal={Invent. Math.},
date={2007},
volume={169},
number={3},
pages={501--517}}

\bib{GL96}{article}{
author={Graham, J. J.}, author={Lehrer, G. I.}
title={Cellular algebras},
journal={Invent. Math.},
date={1996},
volume={123},
number={1},
pages={1--34}}

\bib{GM14}{article}{
author={Grant, J.}, author={Marsh, R. J.}
title={Braid groups and quiver mutation},
journal={Pacific J. Math.},
date={2017},
volume={290},
number={1},
pages={77--116}}

\bib{GXi09}{article}{
author={Guo, X. J.}, author={Xi, C. C.}
title={Cellularity of twisted semigroup algebras},
journal={J. Pure Appl. Algebra},
date={2009},
volume={213},
number={1},
pages={71--86}}

\bib{H90}{book}{
author={Humphreys, J. E.},
title={Reflection groups and Coxeter groups},
series={Cambridge Studies in Advanced Mathematics, 29},
publisher={Cambridge University Press},
address={Cambridge},
year={1990}}

\bib{H95}{book}{
author={Howie, J. M.},
title={Fundamental of Semigroup Theory},
publisher={Oxford University Press},
address={New York},
year={1995}}

\bib{HHLP}{article}{
author={Haley, J.}, author={Hemminger, D.},author={Landesman, A.},author={Peck, H.},
title={Artin group presentations arising from cluster algebras},
journal={Algebr. Represent. Theory},
date={2017},
volume={20},
number={3},
pages={629--653}}

\bib{JL16}{article}{
author={Ji, Y. D.}, author={Luo, Y. F.},
title={Cellularity of Some Semigroup Algebras},
journal={Bull. Malays. Math. Sci. Soc.},
date={2017},
volume={40},
number={1},
pages={215--235}}

\bib{L53}{article}{
author={Liber, A. E.},
title={On symmetric generalized groups},
journal={(Russian) Mat. Sbornik N.S.},
date={1953},
volume={33},
number={75},
pages={531--544}}

\bib{L96}{book}{
author={Lipscomb, S.},
title={Symmetric inverse semigroups},
series={Mathematical Surveys and Monographs, 46},
publisher={American Mathematical Society},
address={Providence, RI},
year={1996}}

\bib{P61}{article}{
author={Popova, L. M.},
title={Defining relations in some semigroups of partial transformations of a finite set},
journal={Uchenye Zap. Leningrad Gos. Ped. Inst.},
date={1961},
volume={218},
pages={191--212}}

\bib{M13}{book}{
author={Marsh, R. J.},
title={Lecture notes on cluster algebras},
series={Zurich Lectures in Advanced Mathematics},
publisher={European Mathematical Society (EMS)},
address={Z$\ddot{u}$rich},
year={2013}}


\bib{M99}{book}{
author={Mathas, A.},
title={Iwahori-Hecke algebras and Schur algebras of the symmetric group},
series={University Lecture Series, 15},
publisher={American Mathematical Society},
address={Providence, RI},
year={1999}}

\bib{M95}{article}{
author={Murphy, G. E.},
title={The representations of Hecke algebras of type $A_n$},
journal={J. Algebra},
date={1995},
volume={173},
number={1},
pages={97--121}}

\bib{Schi08}{article}{
author={Schiffler, R.},
title={A geometric model for cluster categories of type $D_n$},
journal={J. Algebraic Combin.},
date={2008},
volume={27},
number={1},
pages={1--21}}

\bib{S14}{article}{
author={Seven, A. I.},
title={Reflection group relations arising from cluster algebras},
journal={Proc. Amer. Math. Soc.},
date={2016},
volume={144},
number={11},
pages={4641--4650}}

\bib{ST97}{article}{
author={Schein, B. M.},author={Teclezghi, B.},
title={Endomorphisms of finite symmetric inverse semigroups},
journal={J. Algebra},
date={1997},
volume={198},
number={1},
pages={300--310}}

\bib{V10}{article}{
author={Vatne, D. F.},
title={The mutation class of $D_n$ quivers},
journal={Comm. Algebra},
date={2010},
volume={38},
number={3},
pages={1137--1146}}

\bib{W07}{article}{
author={Wilcox, S.},
title={Cellularity of diagram algebras as twisted semigroup algebras},
journal={J. Algebra},
date={2007},
volume={309},
number={1},
pages={10--31}}

\bib{Xi99}{article}{
author={Xi, C. C.},
title={Partition algebras are cellular},
journal={Compositio Math.},
date={1999},
volume={119},
number={1},
pages={99--109}}

\bib{Xi06}{article}{
author={Xi, C. C.},
title={Cellular algebras},
eprint={https://webusers.imj-prg.fr/~bernhard.keller/ictp2006/lecturenotes/xi.pdf}}

\end{biblist}
\end{bibdiv}

\end{document}